\numberwithin{equation}{section}
\theoremstyle{plain}
\newtheorem{theorem}[subsection]{Theorem}
 \newtheorem{lemma}[subsection]{Lemma}
 \newtheorem{proposition}[subsection]{Proposition}
 \newtheorem{corollary}[subsection]{Corollary}
\newtheorem{question}[subsection]{Question}
 \theoremstyle{definition}
\newtheorem{remark}[subsection]{Remark}
\newcommand{\BA}{\mathbb{A}}
\newcommand{\bbA}{\mathbb{A}}
\newcommand{\CC}{\mathbb{C}}
\newcommand{\BL}{\mathbb{L}}
\newcommand{\LL}{\mathbb{L}}
\newcommand{\C}{\mathbb{C}}
\newcommand{\BZ}{\mathbb{Z}}
\newcommand{\BN}{\mathbb{N}}
\newcommand{\BQ}{\mathbb{Q}}
\newcommand{\calc}{\mathcal{C}}
\newcommand{\CO}{\mathcal{O}}
\newcommand{\CT}{\mathcal{T}}
\newcommand{\CX}{\mathcal{X}}
\newcommand{\CZ}{\mathcal{Z}}
\newcommand{\CY}{\mathcal{Y}}
\newcommand{\CI}{\mathcal{I}}
\newcommand{\CM}{\mathcal{M}}
\newcommand{\CR}{\mathcal{R}}
\newcommand{\frm}{\mathfrak{m}}
\newcommand{\spn}{\mathrm{span}}
\newcommand{\Conj}{\mathrm{Conj}}
\newcommand{\supp}{\textup{supp}}
\newcommand{\codim}{\textup{codim}}
\newcommand\Gal{\textup{Gal}}
\newcommand\Spec{\textup{Spec}}
\newcommand\Sym{\textup{Sym}}
\newcommand\Exp{\textup{Exp}}
\newcommand\Hom{\textup{Hom}}
\newcommand{\quash}[1]{}
\newcommand{\Hilb}{\textup{Hilb}}
\def\Hilb{ \mathrm{Hilb}}
\newcommand{\ord}{\mathrm{ord}}
\newcommand{\Def}{\mathrm{Def}_{\mathrm{acf},k}}
\newcommand{\GDef}{\mathrm{GDef}_{\mathrm{acf},k}}
\title{Discriminants and motivic integration}
\author{Oscar Kivinen}
\author{Alexei Oblomkov}
\author{Dimitri Wyss}
\begin{document}

\begin{abstract} We study invariants of a plane cuve singularity $(f,0)$ coming from motivic integration on symmetric powers of a formal deformation of $f$. We show that a natural discriminant integral recovers the motivic classes of the principal Hilbert schemes of points on $f$, while the orbifold integral gives the plethystic exponential of the motivic Igusa zeta function of $f$. The latter result also holds in higher dimemsions. 

Combined with results of Gorsky and Némethi we obtain an interpretation  of the discriminant integrals in terms of knot Floer homology, which is reminiscent of the relation between the cohomology of contact loci and fixed point Floer homology proven by de la Bodega and Poza.

\end{abstract}

\maketitle

\section{Introduction}
\label{sec:introduction}

Consider a nonconstant polynomial map $f:\BA^m \to \BA^1$ over $k=\CC$ defining an isolated hypersurface singularity at $0\in \BA^m$. A fundamental invariant of $(f,0)$ is its Milnor fiber $F_0$, a compact smooth manifold with boundary depending only on the singularity at $0$ \cite{Mi16}. Its singular cohomology $H^*(F_0,\BQ)$ admits a monodromy endomorphism $M$, whose zeta function $\zeta_{f,0}(T) = \prod_{i\geq i} \det (1-TM \ |\ H^i(F_0,\BQ))^{(-1)^{i+1}}$
can be computed purely algebraically by a remarkable result of Denef and Loeser \cite{DenefLoeser02}:

For any $n\geq 1$ let $X^0_{f,n}$ denote the $n$-th restricted contact locus of $f$. It is defined as
\begin{equation*}
X^0_{f,n} = \{ x \in \BA^m(k[[t]]/t^{n+1})_0 \ |\ f(x) = t^n \in k[[t]]/t^{n+1} \},
\end{equation*}
where $\BA^m(k[[t]]/t^{n+1})_0 $ denotes the locus of $n$-jets reducing to $0$ modulo $t$. Then the Euler characteristics $\chi(X^0_{f,n})$ determine $\zeta_{f,0}(T)$ via the formula
\begin{equation}\label{czeta} \zeta_{f,0}(T) =  \exp\left(\sum_{n\geq 1} \frac{\chi(X^0_{f,n})}{n} T^n\right).  \end{equation}

While the original proof of \eqref{czeta} proceeds by computing both sides independently on an embedded resolution of $f$, there exists now a more conceptual interpretation \cite{NS07, HrushovskiLoeser15} as a Lefschetz fixed point formula on the analytic Milnor fiber. In a similar spirit, the goal of this paper is to study refinements of \eqref{czeta} coming from motivic integration. 

Concretely, consider the $k[[t]]$-scheme
\[ \CX_f = \Spec(k[[t]][x_1,\dots,x_m] / f-t), \]
and associated with it for any $n\geq 0$ the relative symmetric power $\Sym_n\CX_f = \CX_f \times_{k[[t]]} \dots \times_{k[[t]]} \CX_f/S_n$. In a nutshell our paper studies motivic classes in the localized Grothendieck ring $\calc_k=K_0(Var_k)[\BL^{-1}, (1-\BL^i)^{-1}:i \geq 1]$ coming from Cluckers-Loeser motivic integrals \cite{CL08} on
\[\Sym_n\CX_f^0(k[[t]]) =  \{ x \in \Sym_n(\CX_f)(k[[t]]) \ |\ x_k = n[0]   \}.\]

Our first main result is in the case $m=2$, i.e. when $(f,0)$ defines an isolated plane curve singularity, and is partially motivated by the second author's work on the relation between Hilbert schemes and knot invariants \cite{OblomkovShende12, OblomkovRasmussenShende12}. Namely, since $\CX_{f|k((t))}$ is a smooth curve, the generic fiber of $\Sym_n\CX_f$ admits another model over $\Spec(k[[t]])$ given by the relative Hilbert scheme of points $ \Hilb_n(\CX_f)$. 

The special fiber $\Hilb_n(\CX_f)_k$ is the classical Hilbert scheme $\Hilb_n(f)$ of length $n$ subschemes on the singular curve $f=0$ and we write $\Hilb_n^1(f)_0\subset \Hilb_n(f)$ for the locally closed subscheme parametrizing principal subschemes supported at $0$. 

\begin{theorem}[\ref{gelhil}]\label{ingel} For any $n\geq 1$ there exists a generically non-vanishing definable volume form $|\omega_n^{gel}|$ on $\Sym_n\CX_f^0$ such that 
\[ \int_{\Sym_n \CX_f^0} |\omega^{gel}_n| = \BL^{-n} [\Hilb_n^1(f)_0] \in \calc_k. \]
\end{theorem}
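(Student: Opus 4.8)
The plan is to compare two motivic integrals over $\Sym_n\CX_f^0$: the one computing the left-hand side via the Cluckers--Loeser formalism, and a second one obtained by transporting the computation to the relative Hilbert scheme $\Hilb_n(\CX_f)$, which is the other $k[[t]]$-model of the generic fiber of $\Sym_n\CX_f$. First I would recall that over $k((t))$ the curve $\CX_{f|k((t))}$ is smooth, so $\Sym_n$ of it is also smooth and canonically identified with $\Hilb_n$ of it; hence $\Sym_n\CX_f$ and $\Hilb_n(\CX_f)$ agree on the generic fiber and one has a birational map of the two integral models, compatible with the condition "$x_k = n[0]$" cutting out the arc spaces $\Sym_n\CX_f^0(k[[t]])$ and $\Hilb_n^1(\CX_f)_0(k[[t]])$. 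The key point is that the relative Hilbert scheme, being built from the flat family $\CX_f/k[[t]]$, carries a natural relative canonical-type form whose associated motivic volume, by the change-of-variables formula in motivic integration (in the Cluckers--Loeser version), computes $\BL^{-\dim}[\,\cdot\,]$ of the special fiber of the relevant locus — here $\Hilb_n^1(f)_0$ — with the shift $\BL^{-n}$ coming from $\dim \Hilb_n(\CX_f)_{k((t))} = n$.

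Concretely, the steps I would carry out are: (1) define $|\omega_n^{gel}|$ on $\Sym_n\CX_f^0$ by pulling back, along the birational identification, a chosen generator of the relative dualizing sheaf of $\Hilb_n(\CX_f)$ over $k[[t]]$ near the fiber over $0$; one should check this is a \emph{definable} volume form in the sense needed for Cluckers--Loeser integrals and that it is generically non-vanishing (this is where the name "gel", presumably for a Gelfand--Leray / "geometric" form, enters — it is the $n$-th symmetric/Hilbert avatar of the Gelfand--Leray form $df^{-1}$ on $\CX_f$). (2) Apply the motivic change of variables to rewrite $\int_{\Sym_n\CX_f^0}|\omega_n^{gel}|$ as an integral over $\Hilb_n^1(\CX_f)_0(k[[t]])$ of the tautological form $|\omega_{\mathrm{can}}|$ attached to a relative-canonical generator. (3) Evaluate that integral: since $\Hilb_n(\CX_f)\to\Spec k[[t]]$ is (étale-locally, or after the standard dévissage) smooth over the locus in question away from a lower-dimensional bad set, and $\Hilb_n^1(f)_0$ is precisely its special fiber intersected with the principal-at-$0$ stratum, the integral of the canonical form evaluates to $\BL^{-n}$ times the naive motivic class $[\Hilb_n^1(f)_0]$, by the usual "volume of the arc space of a variety equals $\BL^{-\dim}$ times its class" computation, applied fiberwise/relatively. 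Assembling (1)--(3) gives the claimed identity in $\calc_k$.

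The main obstacle I anticipate is Step (3), i.e. controlling the singularities of $\Hilb_n(\CX_f)$ as a $k[[t]]$-scheme precisely enough to run the motivic volume computation. The relative Hilbert scheme of a singular curve is itself singular, and $\Hilb_n^1(f)_0$ sits inside the singular special fiber; one cannot simply invoke smoothness. I would handle this by working with the \emph{definable} (Cluckers--Loeser) framework throughout — where one only needs the integrand and the domain to be definable and the form generically non-vanishing, not the ambient spaces to be smooth — and by using the flatness of $\CX_f/k[[t]]$ together with the explicit local structure of principal ideals on a plane curve (a principal subscheme supported at $0$ is cut out by a single function, so $\Hilb_n^1(f)_0$ is a quotient of an open in $\calO_{f,0}$ by units, which linearizes the relevant integral). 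A secondary technical point is verifying that the birational identification of Step (1) is a measure-preserving bijection on the relevant arc spaces up to a subset of measure zero — this should follow from the fact that both sides are honest models of the same smooth generic fiber and the "principal / reducing to $n[0]$" conditions match under the identification, but it requires a careful compatibility check between the symmetric-power description of arcs and the Hilbert-scheme description (essentially, that an arc in $\Sym_n\CX_f$ reducing to $n[0]$ is the same datum as an arc in $\Hilb_n(\CX_f)$ landing in $\Hilb_n^1(f)_0$, a relative version of the classical $\Sym^n C \cong \Hilb^n C$ for smooth curves).
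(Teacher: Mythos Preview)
Your overall architecture is right: pass to $\Hilb_n(\CX_f)$ via the Hilbert--Chow morphism, construct the form there, and compute the integral by a N\'eron-type formula. But you have inverted the logical weight of the two difficulties, and this hides the one genuine idea needed.

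What you call the ``secondary technical point'' --- that an arc in $\Sym_n\CX_f$ reducing to $n[0]$ is the same as an arc in $\Hilb_n(\CX_f)$ landing in $\Hilb_n^1(f)_0$ --- is in fact the \emph{entire} content of the proof. The paper establishes this via the Hilbert--Burch theorem: any $K[[t]]$-point of $\Hilb_n(\CX_f)$ is an ideal $I\subset R=k[x,y][[t]]_{\frm}$ containing $f-t$ with $R/I$ flat of length $n$ over $k[[t]]$, hence Cohen--Macaulay of depth $1$, hence of projective dimension $2$; Hilbert--Burch then bounds the number of generators of $I$, and the fact that $f-t\in\frm\setminus\frm^2$ forces the special fiber ideal to be principal. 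Combined with the identification $\Hilb_n^1(f)=\Hilb_n(f)^{\mathrm{sm}}$ (the smooth locus), this says precisely that every relevant arc factors through the smooth locus of $\Hilb_n(\CX_f)$.

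Once you know that, your ``main obstacle'' evaporates: you \emph{can} simply invoke smoothness, and the integral is computed directly by the N\'eron-smoothening formula $\int|\omega|=\BL^{-\dim}\sum_C\BL^{-\ord_C\omega}[C]$ with a single component $C=\Hilb_n^1(f)_0$ and $\ord_C\omega=0$. There is no need to ``work around non-smoothness'' in the definable framework or to linearize via principal ideals; those workarounds would not by themselves yield the clean class $[\Hilb_n^1(f)_0]$ anyway.

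A second, smaller gap: your construction of $|\omega_n^{gel}|$ as ``a generator of the relative dualizing sheaf'' is too vague to verify definability or the gluing condition. The paper builds the form concretely on Haiman's affine charts $U_\lambda\subset\Hilb_n(\BA^2)$: on $U_{f,\lambda}=U_\lambda\cap\Hilb_n(\CX_f)$ one sets $\omega_\lambda^{gel}=\omega^{[n]}/f_\lambda^{[n]*}\nu$, a Gelfand--Leray quotient of the global symplectic volume form on $\Hilb_n(\BA^2)$ by the tautological section $f^{[n]}$. The transition functions between charts have absolute value $1$ on arcs precisely because those arcs sit in the smooth locus --- so again the Hilbert--Burch step is what makes the form well-defined as a definable volume form, not just the integral computable.
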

By Proposition \ref{prop:motivicequalsgel}, the generating series 
\[P_{gel}(T) = \sum_{n \geq 0} \int_{\Sym_n \CX_f^0} |\omega^{gel}_n| T^n,\]
can be identified with the one-variable motivic Poincaré series of Campillo, Delgado and Gusein-Zade \cite{CDGZ03,CDGZ04}. 

 Since $|\omega^{gel}_n|$ is generically non-vanishing, one can compute the Euler characteristic specialization of $P_{gel}(T) $ by the trace formula for motivic Serre invariants, recovering an old result of Milnor \cite[Lemma 10.1]{Mi16}:

\begin{corollary}[\ref{miln}] For any reduced plane curve singularity $(f,0)$ with link $L$ we have 
\[ \zeta_{f,0}(T) = \frac{A_L(T)}{1-T},\]
where $A_L(T)$ denotes the normalized Alexander polynomial of the link. 
\end{corollary}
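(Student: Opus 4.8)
The plan is to evaluate the Euler characteristic specialization of the generating series $P_{gel}(T)$ in two independent ways and to compare them. Set $K=k((t))$ and write $\chi$ for the (a priori only partially defined) specialization $\BL\mapsto 1$ on $\calc_k$. The role of the hypothesis that each $|\omega_n^{gel}|$ be generically non-vanishing is that $\int_{\Sym_n\CX_f^0}|\omega_n^{gel}|$ is then a motivic integral with respect to a gauge form on a bounded smooth rigid variety over $K$; by Theorem~\ref{gelhil} it lies in $K_0(Var_k)[\BL^{-1}]$, so its reduction modulo $\BL-1$ is a well-defined integer, the motivic Serre invariant. Write $\chi(P_{gel}(T))\in\ZZ[[T]]$ for the series formed from these integers.

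For the first evaluation, Proposition~\ref{prop:motivicequalsgel} identifies $P_{gel}(T)$ with the one--variable motivic Poincaré series of Campillo--Delgado--Gusein-Zade; its image under $\chi$ is the classical one--variable Poincaré series of $(f,0)$, which by \cite{CDGZ03,CDGZ04} equals $A_L(T)/(1-T)$, with the normalization of the Alexander polynomial fixed accordingly. Thus $\chi(P_{gel}(T))=A_L(T)/(1-T)$.

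For the second evaluation I would first identify $\Sym_n\CX_f^0$ rigid-analytically. Its ambient space is the generic fibre of $\Sym_n\CX_f$, namely the $n$-th symmetric power of the smooth affine $K$-curve $\CX_{f,\eta}$, which is smooth over $K$; and $\Sym_n\CX_f^0$ is the tube over the point $n[0]$ of the special fibre, i.e.\ the \emph{punctual} $n$-th symmetric power of the analytic Milnor fibre $\mathscr{F}_0$ of $f$ at $0$ \cite{NS07}. In particular it is a bounded smooth rigid variety over $K$, and by \cite{NS07} there is a monodromy-equivariant identification of the cohomology of $\mathscr{F}_0$ with $H^*(F_0,\QQ)$ carrying its monodromy $M$; hence the cohomology of $\Sym_n\mathscr{F}_0$ is $\bigoplus_{a+b=n}\Sym^aH^0(F_0,\QQ)\otimes\wedge^bH^1(F_0,\QQ)$, with $\wedge^bH^1(F_0,\QQ)$ placed in cohomological degree $b$ and $M$ acting diagonally. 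By the trace formula for motivic Serre invariants \cite{NS07}, the $n$-th coefficient of $\chi(P_{gel}(T))$ equals the Lefschetz number $\Lambda(M,\Sym_n\mathscr{F}_0)=\sum_i(-1)^i\Tr\!\big(M\mid H^i(\Sym_n\mathscr{F}_0)\big)$. Summing over $n$, using $\sum_{a\ge 0}\Tr(M\mid\Sym^aV)T^a=\det(1-TM\mid V)^{-1}$ and $\sum_{b\ge 0}(-1)^b\Tr(M\mid\wedge^bV)T^b=\det(1-TM\mid V)$, and that a plane--curve Milnor fibre has cohomology only in degrees $0$ and $1$, one obtains
\[
\chi(P_{gel}(T))=\sum_{n\ge 0}\Lambda(M,\Sym_n\mathscr{F}_0)\,T^n=\prod_{i\ge 0}\det\!\big(1-TM\mid H^i(F_0,\QQ)\big)^{(-1)^{i+1}}=\zeta_{f,0}(T).
\]
Comparing the two evaluations gives $\zeta_{f,0}(T)=A_L(T)/(1-T)$, which is \cite[Lemma~10.1]{Mi16}. (By Theorem~\ref{gelhil} one may equivalently read $\chi(P_{gel}(T))=\sum_{n\ge 0}\chi(\Hilb_n^1(f)_0)\,T^n$.)

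I expect the main obstacle to lie in the second evaluation: one has to verify carefully that $\Sym_n\CX_f^0$ really is the punctual symmetric power of the analytic Milnor fibre, so that the trace formula of \cite{NS07} applies with the gauge form supplied by Theorem~\ref{gelhil}, and then carry out the symmetric-power bookkeeping precisely enough that the Koszul signs in the Lefschetz number assemble into the exponents $(-1)^{i+1}$ of $\zeta_{f,0}(T)$ --- in particular matching $H^1(F_0,\QQ)$ to the numerator $\det(1-TM\mid H^1(F_0,\QQ))$, whose monic normalization is $A_L(T)$. The other inputs, Proposition~\ref{prop:motivicequalsgel} and \cite{CDGZ03,CDGZ04}, are used as quoted.
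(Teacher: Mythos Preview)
Your overall strategy---compute $\chi(P_{gel}(T))$ in two ways and compare---is exactly the paper's. The second evaluation is essentially the same as the paper's, just organized differently: the paper packages the symmetric-power cohomology computation into Lemma~\ref{nsym} (via nearby cycles and Berkovich \'etale cohomology) and the sign bookkeeping into Lemma~\ref{trexp}, then passes through the $\exp$-formula and \cite[Theorem~9.2]{NS07}, whereas you go directly to the product form $\prod_i\det(1-TM\mid H^i)^{(-1)^{i+1}}$. Both routes land in the same place.

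There is, however, a genuine gap in your first evaluation. You invoke Proposition~\ref{prop:motivicequalsgel} to identify $P_{gel}(T)$ with the motivic Poincar\'e series and then cite \cite{CDGZ03,CDGZ04} for its Euler specialization. But Proposition~\ref{prop:motivicequalsgel} is stated and proved \emph{only for unibranch} singularities; the multibranch case is merely expected (Remark~\ref{unimu}). Since the corollary is asserted for an arbitrary reduced plane curve singularity, your argument does not cover the multibranch case. The paper avoids this by using Theorem~\ref{gelhil} to write $P_{gel,\chi}(T)=\sum_{n\ge 0}\chi(\Hilb_n^1(f)_0)T^n$ and then citing \cite[Section~3]{OblomkovShende12}, which establishes $\sum_{n\ge 0}\chi(\Hilb_n^1(f)_0)T^n=A_L(T)/(1-T)$ for general $(f,0)$. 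You mention this alternative in your final parenthetical; it is the route you should take as the main argument, not as an aside.
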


As a second corollary, we combine Theorem \ref{ingel} with results of Gorsky and Némethi \cite{GorskyNemethi} to obtain an interpretation of the motivic integrals $\int_{\Sym_n \CX_f^0} |\omega^{gel}_n|$ in terms of the knot Floer homology $HFL^-(L,v)$ of the link $L$ defined by the plane curve singularity $(f,0)$ \cite{OZ,gridhomologybook}. 

\begin{corollary}[\ref{thm:motivichilb}]\label{introcor}

We have an equality
$$P_{gel}(T)=\sum_{v\in \BZ^r, d\in \BZ} \dim HFL^-_d(L,v) \BL^{d-2h(\underline{v})} T^{|v|},$$
where $h(\underline{v})=\codim_{k[[x,y]]}J_{\underline{v}}$ as in Section \ref{mps}.
\end{corollary}

It seems tempting to try to relate Corollary \ref{introcor}  to the arc-Floer conjecture of Budur-de Bobadilla-L\^e-Nguyen \cite{BdBLN22}, now a theorem for plane curve singularities \cite{arcfloer}, predicting an isomorphism between the compactly supported cohomology of the contact loci $X^0_{f,n}$ and the fixed point Floer homology of the $n$-th iterate of the monodromy of $F_0$. Although we do not know any direct relation between $P_{gel}(T)$ and the geometry of the spaces $X^0_{f,n}$ at the moment, our second main result shows that a closely related integral on $\Sym_n \CX_f^0$ is determined by the motivic Igusa zeta function \cite{DL98}
\[Z_f(T) = \sum_{n \geq 1} [X^0_{f,n}]\BL^{-mn} T^n.\]

\begin{theorem}[\ref{plesym}] \label{inplesym}For any non-constant $f \in k[x_1,\dots,x_m]$ there exists a natural orbifold form $ |\omega_{orb}|^{1/2}$ on $\Sym_n \CX^0_{f}$ such that
\[\sum_{n\geq 0} \int_{\Sym_n \CX^0_{f}} |\omega_{orb}|^{1/2} T^n =  \Exp\left( \BL^{-\frac{m-1}{2}} Z_f(\BL^{-\frac{m-3}{2}} T)\right),\]

where $\Exp: T\calc_k[\BL^{1/2}][[T]] \to 1 + T\calc_k[\BL^{1/2}][[T]]$ denotes the plethystic exponential. 
\end{theorem}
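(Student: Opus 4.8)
Here is the approach I would take; throughout I use the notation of the excerpt, and I abbreviate by $\omega_{GL}=dx_1\wedge\dots\wedge dx_m/d(f-t)$ the (rank $m-1$) relative Gelfand--Leray form on the hypersurface $\CX_f=\{f=t\}$ over $k[[t]]$.

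\emph{Step 1: construction of the form.} The $n$-th external power $\omega_{GL}^{(n)}$ over $k[[t]]$ is $S_n$-invariant, so it descends to the quotient stack $[\CX_f^n/S_n]$, whose coarse space is $\Sym_n\CX_f$. I take $|\omega_{orb}|^{1/2}$ to be the associated orbifold half-density, so that by definition the motivic integral against it is the age-twisted orbifold integral of $|\omega_{GL}^{(n)}|^{1/2}$ over the arc space of $[\CX_f^n/S_n]$ à la the motivic McKay correspondence of Denef--Loeser; the half-power is what will eventually produce the $\BL^{-mn}$-normalization built into $Z_f$.

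\emph{Step 2: decomposition into twisted sectors.} Since $k=\CC$, every closed point of the smooth $k((t))$-variety $\CX_{f|k((t))}$ of degree $d$ has residue field $k((t^{1/d}))$, hence is a primitive $\mu_d$-orbit of $k((t^{1/d}))$-points; and a $k[[t]]$-point of $\Sym_n\CX_f$ reducing to $n[0]$ is determined by its generic fiber, an effective degree-$n$ zero-cycle on $\CX_{f|k((t))}$ all of whose points reduce to $0$. Grouping by the partition $\lambda=(1^{m_1}2^{m_2}\cdots)\vdash n$ of degrees gives a decomposition into definable pieces
\[\Sym_n\CX_f^0(k[[t]])=\bigsqcup_{\lambda\vdash n}\ \prod_{d\geq 1}\Sym^{m_d}(\mathcal{P}_d),\qquad \mathcal{P}_d=\{x(s)\in(s\,k[[s]])^m:\ f(x(s))=s^d\}^{\mathrm{prim}}/\mu_d,\]
where $\mathcal{P}_d$ is exactly the $d$-th (cyclic) twisted sector of the orbifold arc space supported at $0$. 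The age of a $d$-cycle acting on $\CX_f^d$ by cyclic permutation is $(m-1)(d-1)/2$, the orbifold half-density is multiplicative along this decomposition, and these two facts together with the defining property of $\Exp$ via symmetric powers (and $\sigma^k(x\,\BL^{a})=\BL^{ka}\sigma^k(x)$, legitimate since $\BL$ is a line element) give
\[\sum_{n\geq 0}\int_{\Sym_n\CX_f^0}|\omega_{orb}|^{1/2}\,T^n=\Exp\!\Big(\sum_{d\geq 1}I_d\,T^d\Big),\qquad I_d:=\int_{\mathcal{P}_d}\big(\text{age-twisted }|\omega_{GL}^{(d)}|^{1/2}\big).\]
Moreover imprimitive twisted arcs form a measure-zero subset, so $I_d$ may be computed over \emph{all} $\mu_d$-twisted arcs of $\CX_f$ over $0$.

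\emph{Step 3: the single-sector integral.} To evaluate $I_d$ I perform the change of variables $t=s^d$ (Jacobian $|s|^{d-1}$) and truncate arcs $x(s)$ modulo $s^{d+1}$; this gives a surjection onto $X^0_{f,d}=\{x\in\BA^m(k[[t]]/t^{d+1})_0:\ f(x)=t^d\}$ which, outside a measure-zero locus, is a (pro-)affine bundle compatible with the motivic measure. Assembling the Jacobian, the age twist $\BL^{-(m-1)(d-1)/2}$, the $(m-1)d$-dimensional normalization implicit in the jet truncation, and the global $\BL^{-(m-1)}$ coming from the half-density of the rank-$(m-1)$ relative form, one obtains
\[I_d=\BL^{-\frac{(m-1)(3d+1)}{2}}\,[X^0_{f,d}]=\BL^{-\frac{m-1}{2}}\cdot[X^0_{f,d}]\BL^{-md}\cdot\big(\BL^{-\frac{m-3}{2}}\big)^{d},\]
which is exactly the coefficient of $T^d$ in $\BL^{-\frac{m-1}{2}}Z_f(\BL^{-\frac{m-3}{2}}T)$. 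Together with Step 2 this proves the theorem.

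\textbf{Main obstacle.} The crux is Step 3 together with making Step 2 rigorous inside the Cluckers--Loeser framework: one must (i) pin down the orbifold half-density and its age-twisted integral as an honest definable volume form on $\Sym_n\CX_f^0$; (ii) verify that the decomposition by residue degree is a decomposition of \emph{definable} sets to which Fubini and the measure-zero vanishing for imprimitive arcs apply, and that the external-product/age structure is genuinely multiplicative across the symmetric-power strata (this is what manufactures $\Exp$); and—hardest—(iii) bookkeep every power of $\BL$ (the Jacobian of $t=s^d$, the fermionic shift/age of the cyclic action on $\CX_f^d$, the affine-bundle structure of $x(s)\mapsto x(s)\bmod s^{d+1}$ onto $X^0_{f,d}$, and the extra shifts coming from the half-power) so that $I_d$ lands precisely on $\BL^{-(m-1)(3d+1)/2}[X^0_{f,d}]$ rather than a neighboring value. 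Once $I_d$ is identified, the reduction to a plethystic exponential is formal.
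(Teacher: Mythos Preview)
Your proposal is correct and follows essentially the same architecture as the paper: decompose the orbifold integral over $\Sym_n\CX_f^0$ into twisted sectors indexed by partitions $\lambda=(1^{a_1}2^{a_2}\cdots)\vdash n$, weight each $d$-cycle sector by the age $\BL^{-(m-1)(d-1)/2}$, identify the single-sector integral with $\BL^{-(d+1)(m-1)}[X^0_{f,d}]$, and then recognize the resulting product over $d$ as the plethystic exponential. Your final exponent $\BL^{-(m-1)(3d+1)/2}$ and the regrouping into $\BL^{-\frac{m-1}{2}}Z_f(\BL^{-\frac{m-3}{2}}T)$ match the paper exactly.

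The only notable difference is in how the sector decomposition (your Step~2) is justified. You argue directly on the level of arcs, reading a $k[[t]]$-point of $\Sym_n\CX_f$ as an effective zero-cycle on the generic fiber and grouping closed points by residue degree. The paper instead packages this as a separate Proposition~\ref{orbif}: it first reduces to the case where $0\in\CX$ is a \emph{smooth} point via an embedded resolution (chosen so that $K[[t^{1/i}]]$-points factor through the smooth locus for all $i\leq n$), and then in the smooth case invokes the Denef--Loeser orbifold formula for $S_n$ acting on $(\BA^d)^n$ verbatim. This sidesteps the measure-zero and definability bookkeeping you flag as obstacles. Similarly, your Step~3 is in the paper the citation of Nicaise--Sebag's Lemma~\ref{clin}, again proved via resolution rather than a direct jet-truncation argument. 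So the paper trades your direct arc-theoretic picture for two black-box inputs (Denef--Loeser orbifold, Nicaise--Sebag), which makes the exponent bookkeeping automatic but is less self-contained.
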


For $m=3$ the relative Hilbert scheme $\Hilb_n(\CX_f)$ provides a crepant resolution of the generic fiber of $\Sym_n \CX^0_{f}$ and a formula similar to Theorem \ref{inplesym} has appeared in \cite{Pa24}, see also Remark \ref{m3}.

Finally, for $m=2$, we can relate the two volume forms $|\omega^{gel}_n|$ and $|\omega_{orb}|^{1/2}$ on $\Sym_n \CX^0_{f}$ by means of the formula 
\[ \BL^{-\ord_{\Delta_f}/2} |\omega_{orb}|^{1/2} = |\omega^{gel}_n|,\] 
where $\Delta_f \subset \Sym_n \CX_{f}$ denotes the discriminant, see Proposition  \ref{orbgel}. In particular, due to the factor $\BL^{-\ord_{\Delta_f}/2}$, the series $P_{gel}(T)$ does not seem to be determined by the classes $[X^0_{f,n}]$. Instead, there is a natural $1$-parameter family 

\[  Q_f(s,T) = \sum_{n\geq 0} \int_{\Sym_n \CX^0_{f}} |\Delta_f|^s |\omega_{orb}|^{1/2} T^n  \in \calc_k[\BL^{1/2}][[T,\BL^{-s}]], \]
for some formal parameter $s$, such that 
\begin{equation}\label{eq:interpolate}
  Q_f(1/2,T) =   P_{gel}(T) \ \text{  and  } \  Q_f(0,T) =  \Exp\left( \BL^{-\frac{1}{2}} Z_f(\BL^{\frac{1}{2}} T)\right).\end{equation}


A good understanding of $Q_f(s,T)$ might give a new way of relating $Z_f(T)$ to $\zeta_{f,0}(T)$, however, even for a smooth curve computing $Q_f(s,T) $ is equivalent to computing the Igusa zeta functions of all classical discriminant polynomials, see Section \ref{cep}. The latter is an open problem as far as we know, and we hope to come back to it in future work.

The paper is organized as follows: In Section \ref{back} we recall the necessary background on Hilbert schemes and motivic integration. In particular, we discuss Haiman's charts of the Hilbert scheme of $\BA^2$ which we use in Section \ref{gelin} to construct the definable form $|\omega_n^{gel}|$ and prove Theorem \ref{inplesym}. In Section \ref{orsec} we discuss the orbifold formalism for symmetric powers, establish Theorem \ref{inplesym} and compare the orbifold form $|\omega_{orb}|^{1/2}$ with $|\omega_n^{gel}|$. In Section \ref{rkt} we discuss the relation with (symplectic) knot invariants, in particular Corollary \ref{introcor}, and speculate about possible refinements and extensions of our results. Finally, in Section \ref{cep} we discuss explicit examples of the series $Q_f(s,T)$ and its specializations. \\

\textbf{Acknowledgements:} We warmly thank  Javier de la Bodega-Aldama, Arthur Forey, Paolo Ghiggini, François Loeser, Johannes Nicaise, Simon Pepin Lehalleur and Ilaria Rossinelli for interesting discussions around the subjects of this paper.We are very grateful to Eugene Gorsky for helping us with the proof of Proposition~\ref{prop:motivicequalsgel}. 

 A.O. would like to thank EPFL, Lausanne for hospitality during the visits in September 2023 and January  2024, many ideas in the paper were developed during these stays. O.K and D.W. were supported the by Swiss National Science Foundation [no. 196960]. O.K. was also supported by a V\"ais\"al\"a project grant of the Finnish Academy of Science and Letters. A.O   was supported by National Science Foundation grant  DMS-2200798.

\section{Background}\label{back}
\subsection{Hilbert Schemes}
Throughout this section we work over an algebraically closed field $k$ of characteristic $0$.
\subsubsection{Hilbert scheme of the plane}
\label{ha2}

The Hilbert scheme of the plane $\Hilb_n(\BA^2)$ is a smooth and irreducible variety of dimension $2n$. In the work of Haiman \cite{Haiman02} a cover of $\Hilb_n(\BA^2)$ by  affine charts is constructed as follows:

For any subset $M \subset \BN^2$ of size $n$ we may consider the open
\[U_M = \{ I \subset k[X,Y] \ |\ \spn\langle X^pY^q\rangle_{(p,q) \in M} \xrightarrow{\sim} k[X,Y]/I\}.  \]

Any monomial $X^pY^q$ defines a section of the tautological bundle $\CT \to \Hilb_n(\BA^2)$ and the collection $\{ X^pY^q\}_{(p,q)\in M}$ trivializes $\CT_{|U_M}$. Any $f \in k[X,Y]$ defines a global section $f^{[n]}$ of $\CT$ which we can describe on $U_M$ using this trivialization as follows: for $I \in U_M$  we may write the image $\overline{f}$ of $f$ in $k[X,Y]/I$ uniquely as
\[ \overline{f} = \sum_{(p,q)\in M} f_{p,q}^{[n]}(I) X^pY^q.  \]
If we write $f_M^{[n]}:U_M \to \BA^n$ for the function $(f_{p_iq_i}^{[n]})_{1 \leq i \leq n}$ obtained this way we have $f_M^{[n]} = f^{[n]}_{|U_M}$ in the trivialization given by $\{ X^pY^q\}_{(p,q)\in M}$. Here we used the inverse lexicographical order on $\BN^2$,  i.e $(a,b) \leq (c,d)$ iff $b \leq d$ or $b=d$ and $a \leq c$, to order the elements $(p_1,q_1) < \dots < (p_n,q_n)$ of $M$.

Now assume $I \in U_M$ has support (with multiplicities) $(x_1,y_1), \dots ,(x_n,y_n)\in \BA^2$. Then $f_M^{[n]}(I)$ is related to the evaluation of $f$ at the points $(x_i,y_i)$ by the matrix $B_M(x,y) = (x_i^{p_j}y_i^{q_j})_{ij}$ i.e.
\begin{equation}\label{ftof}\begin{pmatrix}
         f(x_1,y_1)\\
         f(x_2,y_2)\\ 
         \vdots \\ 
         f(x_n,y_n) 
     \end{pmatrix}
    =B_M(x,y)
     \begin{pmatrix}
         f_{p_1q_1}^{[n]}(I)\\
         f_{p_2q_2}^{[n]}(I)\\ 
         \vdots \\ 
         f_{p_nq_n}^{[n]}(I) 
     \end{pmatrix} \end{equation}

We write $\Delta_M$ for the for the determinant of $B_M(x,y)$, which is an alternating function on $(\BA^2)^n$. 

In what follows we will consider $M=M_\lambda$ with $M_\lambda$ the Young diagram of a partition $\lambda \vdash n$ and we write $U_\lambda$ for $ U_{M_\lambda}$, $f_{\lambda}^{[n]}$ for $f_{M_\lambda}^{[n]}$ etc. In particular, we have an open covering $\Hilb_n(\BA^2) = \bigcup_{\lambda \vdash n} U_\lambda$.




\subsubsection{Hilbert scheme of a plane curve}

Throughout this section we let $f \in k[X,Y]$ be a reduced non-constant polynomial defining a curve $C=\{f=0\} \subset \BA^2$. The Hilbert scheme $\Hilb_n(f)=\Hilb_n(C)$ naturally embeds into $\Hilb_n(\BA^2)$. In fact, we have

\begin{lemma}\label{lcih} Let $\BA^2_S=\Spec(\Sym(\CO^2_S))$ be the affine plane over a base $S$ and $C_S$ the 0-scheme of a section $f\in \Sym(\CO^2_S) $. The Hilbert scheme $\Hilb_n(C_S)$ is the $0$-scheme of the section $f^{[n]} \in H^0(\Hilb_n(\BA^2_S), \CT)$.
\end{lemma}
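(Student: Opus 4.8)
The plan is to reduce the statement to a purely local, functorial question about the tautological bundle and then verify it fiberwise (or rather, flat-locally over the Hilbert scheme) using Haiman's charts recalled above. First I would recall the universal property of $\Hilb_n(\BA^2_S)$: it carries a universal subscheme $\calZ \subset \Hilb_n(\BA^2_S) \times_S \BA^2_S$, finite and flat of degree $n$ over $\Hilb_n(\BA^2_S)$, and the tautological bundle is $\CT = \pi_* \CO_\calZ$ where $\pi: \calZ \to \Hilb_n(\BA^2_S)$ is the projection. A point of $\Hilb_n(\BA^2_S)$ is an ideal sheaf $I$ with $\CO_{\BA^2_S}/I$ locally free of rank $n$, and the section $f^{[n]}$ is by definition the image of $f \in \Sym(\CO^2_S) = \Gamma(\BA^2_S, \CO)$ under $\Gamma(\BA^2_S,\CO) \to \pi_*\CO_\calZ = \CT$, i.e. $f^{[n]}(I) = \bar f \in \CO_{\BA^2_S}/I$. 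The scheme-theoretic zero locus of $f^{[n]}$ is then, by construction, the locus where $\bar f = 0$ in $\CO_{\BA^2_S}/I$, i.e. where $f \in I$, i.e. where the subscheme $\calZ_I$ is contained in $C_S = \{f=0\}$.

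Next I would make this zero-locus computation precise. The section $f^{[n]}$ of the rank-$n$ bundle $\CT$ cuts out the closed subscheme $Z(f^{[n]})$ defined by the ideal generated by the $n$ coordinate functions of $f^{[n]}$ in any local trivialization of $\CT$; intrinsically this is the pullback of the zero section along $f^{[n]}$. On Haiman's chart $U_M$ the bundle $\CT$ is trivialized by $\{X^pY^q\}_{(p,q)\in M}$, and in this trivialization $f^{[n]}$ becomes the map $f_M^{[n]} = (f_{p_iq_i}^{[n]})_{1\le i \le n} : U_M \to \BA^n$, so $Z(f^{[n]}) \cap U_M = V(f_{p_1q_1}^{[n]},\dots,f_{p_nq_n}^{[n]})$. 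On the other hand, for $I \in U_M$ the condition $f \in I$ is exactly the condition that all the expansion coefficients $f_{p_iq_i}^{[n]}(I)$ vanish, since $\{X^pY^q\}_{(p,q)\in M}$ maps to a basis of $\CO/I$ and $\bar f = \sum f_{p_iq_i}^{[n]}(I) X^{p_i}Y^{q_i}$. Hence set-theoretically $Z(f^{[n]}) \cap U_M$ is the locus where the universal subscheme lands in $C_S$. To upgrade this to an equality of schemes I would compare the two subfunctors of $U_M$ they represent: $\Hilb_n(C_S)$ sends a test scheme $T \to U_M$ to the condition that the pulled-back universal subscheme $\calZ_T \subset \BA^2_T$ factors through $C_T$, equivalently that $f$ maps to $0$ under $\Gamma(\BA^2_T,\CO) \to \Gamma(\calZ_T,\CO) = \CT_T$, equivalently (using the trivialization, which is compatible with base change) that $f_M^{[n]}$ pulled back to $T$ is the zero map $T \to \BA^n$ — and this last condition is precisely the functor of points of $V(f_{p_1q_1}^{[n]},\dots,f_{p_nq_n}^{[n]})$. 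Since the $U_\lambda = U_{M_\lambda}$ for $\lambda \vdash n$ cover $\Hilb_n(\BA^2_S)$, these local identifications glue to the global identification $\Hilb_n(C_S) = Z(f^{[n]})$.

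The one genuinely substantive point — and the step I expect to be the main obstacle to state cleanly rather than hard to prove — is the claim that $\Hilb_n(C_S)$ really is a subfunctor of $\Hilb_n(\BA^2_S)$, i.e. that a flat family of length-$n$ subschemes of $C_S$ over $T$ is the same thing as a flat family of length-$n$ subschemes of $\BA^2_T$ that happens to be supported (scheme-theoretically) in $C_T$. This is standard: a closed subscheme of $C_T$ that is $T$-flat of relative length $n$ is in particular a closed subscheme of $\BA^2_T$ which is $T$-flat of relative length $n$, so the representable morphism $\Hilb_n(C_S) \hookrightarrow \Hilb_n(\BA^2_S)$ is a closed immersion, and its image is exactly the locus where the universal quotient map $\CO_{\BA^2_S} \to \CT$ kills $f$. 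One should note that flatness of $\CO_{\BA^2_T}/I$ over $T$ is what makes the $n$ coefficient functions well-defined and makes "$\bar f = 0$ in the fibers" equivalent to "$\bar f = 0$ globally," so the locally free hypothesis in the statement is doing real work; I would spell this out but not belabor it. Finally, I would remark that everything commutes with the base change $S' \to S$ since Haiman's charts, the trivializations, and the formation of $\CT$ all do, which gives the statement over an arbitrary base $S$ as asserted.
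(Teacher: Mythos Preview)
Your argument is correct. The paper's own proof is a one-line citation: ``This is a consequence of \cite[Proposition 4]{AIK}.'' So you have taken a genuinely different route, namely a self-contained functor-of-points argument that exploits the Haiman charts $U_M$ and trivializations of $\CT$ already set up in Section~\ref{ha2}. What your approach buys is that the reader never has to leave the paper: the identification $Z(f^{[n]})\cap U_M = V(f^{[n]}_{p_1q_1},\dots,f^{[n]}_{p_nq_n})$ and the functorial check that this is the locus where the universal subscheme factors through $C_S$ are both transparent in the chart description, and this local picture is in any case what the paper uses later (e.g.\ in constructing $\omega^{gel}_\lambda$). The citation approach is of course shorter and places the lemma in its natural generality (the result in \cite{AIK} does not depend on the ambient being $\BA^2$ or on Haiman's combinatorics). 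One small remark: your chart-by-chart verification is strictly speaking redundant once you have stated the intrinsic description of $Z(f^{[n]})$ as the locus where $\bar f = 0$ in $\CO_{\calZ}$; the charts only serve to make ``zero locus of a section of a locally free sheaf'' concrete, and you could shorten the write-up by saying so.
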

\begin{proof}
This is a consequence of \cite[Proposition 4]{AIK}.
\end{proof}

For any $k\geq 1$ we write $\Hilb^{\leq k}_n(f) \subset \Hilb_n(f)$ for the open subscheme of ideals $I \subset \CO_C$ that can be generated by $k$ elements. In particular we denote by $\Hilb_n^1(f) \subset \Hilb_n(f)$ the $1$-generator Hilbert scheme i.e. the locus of principal ideals
\[\Hilb_n^1(f) = \{I \in \Hilb_n(f) \ |\ I=(g) \text{ for some } g\in \CO_C\}.\]

\begin{lemma}\label{sml} The $1$-generator subscheme $\Hilb_n^1(f) $ agrees with the smooth locus of $ \Hilb_n(f)$
\end{lemma}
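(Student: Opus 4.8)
The plan is to prove the two inclusions separately. First I would show that a principal ideal $I=(g)\subset\CO_C$ of colength $n$ gives a smooth point of $\Hilb_n(f)$. By Lemma~\ref{lcih}, $\Hilb_n(f)$ is cut out inside the smooth $2n$-dimensional variety $\Hilb_n(\BA^2)$ as the zero scheme of the tautological section $f^{[n]}\in H^0(\Hilb_n(\BA^2),\CT)$ of the rank-$n$ bundle $\CT$. Hence at a point $I$ the scheme $\Hilb_n(f)$ is smooth of the expected dimension $n$ precisely when the differential of $f^{[n]}$ at $I$, a map $T_I\Hilb_n(\BA^2)\to \CT_I$, is surjective. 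So the first step is a tangent-space computation: I would use the standard identification $T_I\Hilb_n(\BA^2)=\Hom_{\CO}(I,\CO/I)$ and the description of the obstruction-to-deforming-in-$C$ map. Concretely, deforming $I$ inside $C$ means preserving $f\in I$; the cokernel of the restriction map $\Hom_\CO(I,\CO/I)\to\Hom_\CO((f),\CO/I)$ measures the failure, and one identifies $\Hom_\CO((f),\CO/I)\cong\CO/I$ (sending $\phi\mapsto \phi(f)$, using that $f$ is a nonzerodivisor in $k[X,Y]$). When $I=(g)$ with $g$ a nonzerodivisor on $C$, i.e. $g$ coprime to $f$ in $k[X,Y]$, I would exhibit an explicit first-order deformation realizing any prescribed element of $\CO/I$: deform $(g)$ to $(g+\varepsilon h)$ inside $k[X,Y]/(f)$ and compute that this moves $f^{[n]}$ by the class of $-(f/g)h\bmod (g)$ — and since $f/g$ is a unit modulo $g$ (as $f,g$ are coprime and $\CO/(g)$ is supported away from the common zero locus of $f$ and $g$, which is empty), $h\mapsto -(f/g)h$ is surjective onto $\CO_C/(g)$. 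This shows $df^{[n]}_I$ is surjective, so $I\in\Hilb_n(f)_{\mathrm{sm}}$.

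For the reverse inclusion, I would argue that a non-principal ideal $I\in\Hilb_n(f)$ is a singular point. The cleanest route is a local/dimension argument: $\Hilb_n(f)$ always has dimension $\geq n$ at every point (it is the zero locus of $n$ equations on a $2n$-fold), and it is known (by Altman–Iarrobino–Kleiman, or by the fact that the compactified Jacobian / $\Hilb^1$ locus is dense) that $\Hilb_n^1(f)$ is open, dense, and of pure dimension $n$ in $\Hilb_n(f)$ whenever $C$ is reduced. Thus at a point $I$ where $\dim_I\Hilb_n(f)=n$ and which is a smooth point, $\Hilb_n(f)$ coincides locally with its dense open $\Hilb_n^1(f)$, forcing $I$ to be principal. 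To make this rigorous I would invoke that on a reduced curve every length-$n$ subscheme is a flat limit of subschemes supported at smooth points of $C$, each of which is trivially principal, hence $\Hilb_n^1(f)$ is dense; and that $\Hilb_n(f)$ is generically smooth along $\Hilb_n^1(f)$ by the first part. A smooth point of $\Hilb_n(f)$ has a neighborhood irreducible of dimension $n$; since this neighborhood meets the dense open $\Hilb_n^1(f)$, the point lies in $\Hilb_n^1(f)$.

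The main obstacle, and the step deserving the most care, is the tangent-space/obstruction computation in the first paragraph: one must correctly identify the normal bundle of $\Hilb_n(f)$ in $\Hilb_n(\BA^2)$ with $\CT$ (this is exactly the content of Lemma~\ref{lcih} / \cite{AIK}), and then check that the differential of the section $f^{[n]}$ at a principal ideal is surjective — equivalently that $\Ext^1_{\CO_C}(I,\CO_C/I)$-type obstructions vanish for $I$ principal. For a principal ideal $I=(g)$ on the (possibly singular) curve $C$ one has $\Hom_{\CO_C}(I,\CO_C/I)\cong \CO_C/(g)$ canonically (via $1\mapsto$ image of generator, using that $g$ is a nonzerodivisor), which is $n$-dimensional, matching the expected tangent dimension; conversely if $I$ needs $\geq 2$ generators then $\dim\Hom_{\CO_C}(I,\CO_C/I) > n$, so $I$ is a singular point. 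I would carry out this local computation at the completed local ring $\widehat{\CO}_{C,p}$ at each point $p$ of the support, reducing to a one-dimensional complete local ring, where "principal" versus "needs more generators" is transparent and the $\Hom$-dimension count is a short exercise. This last dimension count — $\dim_k\Hom_R(I,R/I) = n$ iff $I$ is principal, for $R$ a reduced complete local curve ring and $I$ of colength $n$ — is the real heart of the lemma, and is where I expect to spend the bulk of the effort.
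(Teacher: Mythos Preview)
Your proposal contains two parallel arguments for the converse direction, and one of them is wrong. The density argument --- ``a smooth point has an irreducible neighborhood of dimension $n$; this neighborhood meets the dense open $\Hilb_n^1(f)$; hence the point lies in $\Hilb_n^1(f)$'' --- is a non sequitur. Any smooth variety with a proper dense open already shows this fails: every point is smooth, every neighborhood meets the open, yet not every point lies in the open. Openness and density of $\Hilb_n^1(f)$ alone cannot force a smooth point to be principal.

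Fortunately your other approach, the tangent-space dimension count, is the right one and is exactly what the paper does. The paper computes $T_I\Hilb_n(f)=\Hom_{\CO_C}(I_Z,\CO_Z)$; when $I_Z$ is invertible this is $\Hom_{\CO_Z}(\CO_Z,\CO_Z)\cong\CO_Z$, which is $n$-dimensional, giving smoothness. For the converse the paper (following Kass) localizes at a point, lifts $I_Z$ to $\widetilde I_Z\subset k[[x,y]]$, and uses the exact sequence
\[0\to\Hom_{\CO_C}(I_Z,\CO_Z)\to\Hom_{k[[x,y]]}(\widetilde I_Z,\CO_Z)\to\CO_Z,\]
where the middle term has dimension $2n$ (since $\widetilde I_Z$ has projective dimension $1$ over the regular local ring $k[[x,y]]$, hence is free of rank $2$ after a choice of presentation --- or more directly via Hilbert--Burch). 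The key point, proved in Kass, is that the image of the rightmost map lands in $\mathfrak m_Z$ whenever $I_Z$ is not principal, forcing $\dim T_I\Hilb_n(f)\geq 2n-(n-1)=n+1$. This exact-sequence trick is a clean way to carry out the ``short exercise'' you allude to, and you may find it easier than a bare-hands count of $\dim_k\Hom_R(I,R/I)$. Your first, more elaborate argument for the forward direction (surjectivity of $df^{[n]}$ via explicit deformations and the symbol ``$f/g$'') is unnecessary once you have the direct $\Hom$ computation; drop it.
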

\begin{proof}This is \cite[Proposition 6.5.]{Kass09}. To elaborate, 
the tangent space at $x\in \Hilb^n(f)$ corresponding to a closed subscheme $Z$ is given by $\Hom_{\CO_C}(I_Z,\CO_Z)$. If $x\in \Hilb^n_1(f)$, or equivalently when $Z$ is Cartier, $\Hom_{\CO_C}(I_Z,\CO_Z)\cong \Hom_{\CO_Z}(\CO_C, \CO_Z)$, which has dimension $n$. So these points are in the smooth locus. Conversely, 
if $x\in \Hilb^n(f)\backslash \Hilb_n^1(f)$, we can assume $Z$ is supported at a single point and in this case the completed local ring at that point will be isomorphic to $k[[x,y]]/f$ for some $f\in k[[x,y]]$. Lifting $I_Z$ to $\widetilde{I}_Z\subset k[[x,y]]$, there is an exact sequence 
$$0\to \Hom(I_Z,\CO_Z)\to\Hom(\widetilde{I}_Z,\CO_Z)\to \CO_Z$$ In the proof of \cite[Proposition 6.5.]{Kass09}, it is proved that the image of the last map lies in $\mathfrak{m}_Z$, the maximal ideal of $\CO_Z$. Since $\dim_k \mathfrak{m}_Z=n-1$, the exact sequence implies a lower bound $$\dim_k T_x(\Hilb^n(f))\geq 2n-n+1=n+1$$ and hence the points outside the $1$-generator locus are singular.
\end{proof}

Finally, we consider for any $k\geq 1$ the deformation $f-t^k$ seen as a subscheme of $\BA^2_{k[[t]]}$. 

\begin{lemma}\label{burch} Let  \(k\ge 1\). Then any section $s$ of the relative Hilbert scheme $\Hilb^n(f-t^k)\to \Spec(k[[t]])$ evaluates to an ideal with at most \(k\)-generators: $s_{|\Spec(k)}\in \Hilb_n^{\leq k}(f)$.
\end{lemma}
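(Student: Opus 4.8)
The plan is to translate ``section'' into commutative algebra, reduce to a local statement at the points of the special fibre, and then run a Hilbert–Burch type argument over a regular local ring of dimension $2$. By Lemma~\ref{lcih} a section $s$ of $\Hilb^n(f-t^k)\to\Spec k[[t]]$ is the same as a closed subscheme $\mathcal{Z}\subset\{f=t^k\}\subset\BA^2_{k[[t]]}$ which is finite and flat of degree $n$ over $k[[t]]$, with special fibre $\mathcal{Z}_0=s_{|\Spec k}\subset C$. Since $k[[t]]$ is henselian and $\mathcal{Z}$ is finite over it, $\mathcal{Z}$ splits as a disjoint union of local pieces, one over each point $p$ of $\mathcal{Z}_0$, so it suffices to bound the number of generators of the local ideal $I_{\mathcal{Z}_0,p}\subset\CO_{C,p}$ by $k$ for every such $p$. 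Recentering $p$ at the origin and completing, set $S=k[[X,Y]]$, let $\bar R=k[[X,Y,t]]/(f-t^k)$ be the complete local ring of $\{f=t^k\}$ at $(p,0)$, and let $J\subset\bar R$ be the ideal with $M:=\bar R/J$ free of rank $n$ over $k[[t]]$. Because $M$ is $t$-torsion free one has $t\bar R\cap J=tJ$, so $J/tJ$ embeds into $\bar R/t\bar R=k[[X,Y]]/(f)=\wh\CO_{C,p}$ with image the completed ideal of $\mathcal{Z}_0$ at $p$; as $\mu$ is insensitive to completion, it is enough to prove that $J$ is generated by at most $k$ elements over $\bar R$.

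The key point — and what produces the bound $k$ rather than the multiplicity of $f$ at $p$ — is that $\bar R$ is a \emph{free} $S$-module of rank $k$. Indeed the element $t\in\bar R$ satisfies $t^k=f\in\mathfrak m_S$ (since $p\in C$), and the relation $t^k=f$ allows one to rewrite any element of $k[[X,Y]][[t]]$ modulo $t^k-f$ as $\sum_{j=0}^{k-1}h_j(X,Y)\,t^j$, the tails $\sum_q h_{qk+j}f^q$ converging $\mathfrak m_S$-adically; thus $\bar R=S[t]/(t^k-f)=\bigoplus_{j<k}S\,t^j$, and $S\hookrightarrow\bar R$ is injective by Krull's intersection theorem applied to $f$. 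Now $M$ is a finitely generated $S$-module of dimension $1$, and $f=t^k$ acts on $M$ as a nonzerodivisor, being a power of $t$ and $M$ free over $k[[t]]$; since $f\in\mathfrak m_S$ this gives $\operatorname{depth}_S M\ge 1=\dim_S M$, so $M$ is Cohen–Macaulay over $S$. By Auslander–Buchsbaum over the regular local ring $S$ we get $\operatorname{pd}_S M=2-1=1$, so in the exact sequence $0\to J\to\bar R\to M\to0$ with $\bar R$ free the module $J$ is a first syzygy of a module of projective dimension $\le 1$, hence projective, hence free over the local ring $S$, of rank $k-\operatorname{rank}_S M=k$. In particular $J$ is generated by $k$ elements over $S$, and these generate $J$ over $\bar R$ a fortiori, so $\mu_{\bar R}(J)\le k$ and we are done; reducing modulo $t$ gives $\mathcal{Z}_0\in\Hilb_n^{\le k}(f)$.

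The points that need the most care are essentially bookkeeping: that $\bar R$ is finite free of rank \emph{exactly} $k$ over $S$ (the convergence of the $t$-reduction and the injectivity of $S\to\bar R$, both handled by Krull intersection on $f$), and, if one wants the literal global statement, the step from the pointwise bound $\mu_p\le k$ back to membership in $\Hilb_n^{\le k}(f)$ — immediate when $\Hilb_n^{\le k}(f)$ is read as the locus of ideals that are \emph{locally} $k$-generated, as is implicit in the identification of $\Hilb_n^1(f)$ with the Cartier ($=$ smooth) locus in Lemma~\ref{sml}, and otherwise following for $k\ge2$ from a Forster–Swan estimate on the curve $C$ and for $k=1$ from Lemma~\ref{sml} itself. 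I expect the only genuinely essential ingredient to be the freeness of $\bar R$ over $S$ together with the Cohen–Macaulayness of $\bar R/J$ as an $S$-module — this is the ``Burch'' alluded to in the label of the lemma — with the rest being formal homological algebra.
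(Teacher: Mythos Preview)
Your proof is correct and takes a genuinely different route from the paper's. The paper works in the three-dimensional regular local ring $R=(k[x,y]\otimes k[t])_{\mathfrak m}$, applies Auslander--Buchsbaum to get $\operatorname{pd}_R(R/I)=2$, and then invokes the Hilbert--Burch theorem to present $I$ by a $g\times(g{+}1)$ matrix with entries in $\mathfrak m$; the bound $g\le k$ comes from observing that the maximal minors lie in $\mathfrak m^g$ while $f-t^k\in\mathfrak m^k\setminus\mathfrak m^{k+1}$, and a short case analysis is then needed when $g=k$ to show that $f-t^k$ may be taken as one of the $g{+}1$ generators. You instead descend to the two-dimensional ring $S=k[[X,Y]]$ and use that $\bar R$ is $S$-free of rank exactly $k$: Auslander--Buchsbaum over $S$ then forces the ideal $J\subset\bar R$ itself to be $S$-free, and the rank count $k-0=k$ gives the generator bound directly, with no case split. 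Your argument is slightly cleaner and makes transparent \emph{why} the bound is $k$ --- it is literally $\operatorname{rank}_S\bar R$ --- whereas the paper's approach stays closer to the classical Hilbert--Burch package and works entirely inside the ambient three-dimensional ring. Both arguments rest on the same homological fact (Cohen--Macaulayness of the section plus Auslander--Buchsbaum over a regular local ring); they just apply it at different levels.
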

\begin{proof}
  The local structure ring of the ambient space is  \(R=(k[x,y]\otimes k[t])_{\frm}\) and \(\frm =(x,y,t)\) is the maximal ideal.
  Having a section \(s\) is equivalent to having an ideal \(I\subset R\) such that \(f-t^k\in I\) and \(R/I\) is flat over \(k[[t]]\) of length \(n\).
  Thus \(R/I\) is finite rank over \(k[[t]]\) and since \(\Spec(k[[t]])\) is regular, we conclude that \(R/I\) is Cohen-Macaulay.

  Since \(R/I\) is Cohen-Macaulay and is structure ring of one-dimensional scheme, we have \(\mathrm{depth}(R/I)=1\) and by the Auslander-Buchsbaum theorem \cite[Theorem A 2.15]{Eisenbud05} we conclude that the projective dimension of \(R/I\) is \(2\).
  Hence \(I\) admits two step free resolution and
  by the Hilbert-Burch theorem\cite[Theorem 3.2]{Eisenbud05} there is \(g\) by \(g+1\) matrix \(A\) with entries in \(\frm\) such that we have short exact sequence:
  \[0\to R^{g}\xrightarrow{A} R^{g+1}\xrightarrow{G}I\to 0.\]
  where \(G\)  is the row of the minors of \(A\) and \(g+1\) is the minimal number of
  generators of \(I\).

  Next, we observe that \(f-t^k\in \frm^k\setminus\frm^{k+1}\) and that the entries
  of \(G\) are in \(\frm^g\). Hence we conclude that \(g\le k\) and \(I\) has
  \(g+1\) generators \(h_1,\dots,h_{g+1}\). If \(g<k\) then the statement is proven
  otherwise we need to argue explain why \(f-t^k\) is one of the generators \(h_i\).

  Indeed, if \(g=k\) then  \(h_i=\sum_{j=0}^k t^j\alpha_j^i\), \(\alpha_j^i\in \frm^{n-j}\). On the other hand \(f-t^k=\sum_i \beta_i h_i\), \(\beta_i\in R\) and hence
  at least one of \(\beta_i\) is not vanishing modulo \(\frm\).  In particular,
  \(\beta_i\) is a unit in \(R\) and \(h_i=(f-t^k)\beta_i^{-1}-\sum_{j\ne i}h_i\beta_j/\beta_i\) and \(I\) is generated by \(f-t^k\) and \(h_j\), \(j\ne i\).
 \end{proof}

\subsection{Motivic integration}

\subsubsection{Grothendieck Rings}
For any $k$-variety $Z$ we write $K_0(Var_Z)$ for the relative \linebreak Grothendieck ring of varieties over $Z$ and $\BL$ for the class of $\BA^1 \times Z \to Z$. We further write $\CM_Z$ for the localized ring $K_0(Var_Z)[\BL^{-1}]$ and $\calc_Z =  \CM_Z[(1-\BL^i)^{-1}:i \geq 1]$. If $Z = \Spec(k)$ we simply write $K_0(Var_k)$, $\CM_k$ and $\calc_k$ instead of $K_0(Var_{\Spec(k)})$, $\CM_{\Spec(k)}$ and $\calc_{\Spec(k)}$.

In \cite{GLM04} the authors construct power structures on $K_0(Var_k)$ and $\CM_k$ and in particular plethystic operations. For $\CR$ either  $K_0(Var_k)$ or $\CM_k$, the plethystic exponential is a group homomorphism 
\[ \Exp: T \CR[[T]] \to 1+ T \CR[[T]],  \]
defined by the formula 
\[ \Exp( a_1T +a_2T^2 + \dots) = \prod_{n \geq 1} (1-T^n)^{-a_n}.  \]
Here, if $a = [X] \in \CR$ is the class of a variety, then 
\[ (1-T)^{-a} = \zeta_X(T) = \sum_{m \geq 0} [\Sym_m X] T^m   \]
is the Kapranov zeta function of $X$ \cite{Ka00}, with $\Sym_mX$ the $m$-th symmetric power of $X$ defined by 
\[ \Sym_m X = X \times X \times \dots \times X / S_m, \]
where $S_m$ denotes the $m$-th symmetric group. We extend this by defining the $m$-th symmetric product $\Sym_m(a)$ of any element $a \in \CR$ by 
\[\Exp(aT) =   \sum_{m \geq 0} \Sym_m(a) T^m.\]
By \cite[Statement 3]{GLM04} we have in particular $\Sym_m(\BL^{\pm 1}a) = \BL^{\pm m}\Sym_m(a)$. Following \cite[Appendix B]{DM15} one can further extend $\Exp$ and thus $\Sym_m$ for $m\geq 0$, to the ring
\[\calc_k[\BL^{1/2}] =  \CM_k[\BL^{1/2}, (1-\BL^i)^{-1}:i \geq 1],\]
in such a way that $\Sym_m(\BL^{\pm 1/2}a) = \BL^{\pm m/2}\Sym_m(a)$. Here $\BL^{1/2}$ corresponds to the \textit{negative square root} in \textit{loc. cit.}

\subsubsection{Order function and absolute values}

Let $\CX$ be a $k[[t]]$-scheme and $\CZ \subset \CX$ a closed subscheme with ideal sheaf $\CI$. For any $x \in \CX(k[[t]])$ we define the order $\ord_\CZ(x)$ of $x$ along $\CZ$ as
\[ \ord_\CZ(x) = \begin{cases}d  \ &\text{ if } x^*\CI = (t^d) \\    
\infty  \  &\text{ if }  x^*\CI = (0).\end{cases} \]
Notice that $ \ord_\CZ(x) = \infty$ if and only if $x \in \CZ(k[[t]])$. If $\CX$ is affine and $\CI$ generated by $z_1,\dots,z_k:\CX \to \BA^1$, then $\ord_{\CZ}(x)= \min_{1 \leq i\leq k}\{ \ord_t z_i(x) \}$.

 There is a unique way of extending $\ord_\CZ$ to a function
 \[ \ord_\CZ:\CX(\overline{k[[t]]}) = \bigcup_{r \geq 1} \CX(k[[t^{1/r}]]) \to \BQ \cup \{\infty\}. \]

Given a subscheme $\CY \subset \CX$ and $y \in \CY(\overline{k[[t]]})$ we will use that
\[ \ord_{\CZ \cap \CY}(y) = \ord_{\CZ}(y), \]
where on the right we consider $y$ as an $\overline{k[[t]]}$-point of $\CX$. 

Finally, given a regular map $f:\CX \to \BA^1$ or a subscheme $\CZ \subset \CX$ we write $|f|$ and $|\CZ|$ for the maps $\CX(k[[t]]) \to \CM_k$ defined by
\begin{equation}\label{motmap} x \mapsto \BL^{-\ord_t(f(x))} \text{ and } x \mapsto \BL^{-\ord_\CZ(x)}.
\end{equation}

When working with $k[[t^{1/n}]]$-points for some $n \geq 1$ we will write $|\cdot|_n$ to indicate that we compute the absolute value with respect to the uniformizer $t^{1/n}$.

\subsubsection{Cluckers-Loeser integration}

We use the integration theory developed in \cite{CL08}.  As in \cite[16.2]{CL08} and in the notation of \cite{LoeserWyss19} we consider the category $\Def$ of definable subassignments in the Denef-Pas language restricted to the theory of algebraically closed fields. 

Concretely, an object $h \in \Def$ is given by an assignment 
\[ K \mapsto h(K) \subset k((t))^n \times k^m \times \BZ^r, \]
where $K$ runs through all algebraically closed fields containing $k$, $m,n,r \geq 0$ and $h(K)$ is cut out by a formula in the Denef-Pas language over $k$. In particular any affine finite type scheme $X \subset \BA^n$ over $k((t))$ defines an element in $\Def$, which we still denote by $X$, by means of 
\[ K \mapsto X(K((t))) \subset K((t))^n \times K^0 \times \BZ^0. \]

Similarly, for an affine finite type scheme $\CX \subset \BA^n$ over $k[[t]]$ we still write $\CX$ for  the assignment
\[K \mapsto \CX(K[[t]]).\]

Finally, we will sometimes consider the assignments $X_n, \CX_n$ given by $K \mapsto X(K((t^{1/n})))$ or $K \mapsto \CX(K[[t^{1/n}]])$ for some integer $n \geq 1$. Notice that these are simply the assignments associated to the Weil restrictions of $X \times_{k((t))} k((t^{1/n}))$ and $X \times_{k[[t]]} k[[t^{1/n}]]$ along $k((t)) \to k((t^{1/n}))$ and $k[[t]] \to k[[t^{1/n}]]$ respectively.

These constructions can be globalized to obtain a category $\GDef$ consisting of definable subassignments of algebraic varieties. 

To any $S \in \Def$ one can associate a ring of constructible motivic functions $\calc(S)$ in such a way that for the terminal object $*_k \in \Def$ we have
\[ \calc(*_k) = \calc_k =  K_0(Var_k)[\BL^{-1}, (1-\BL^i)^{-1}:i \geq 1].\]
Typical examples of constructible motivic functions are the maps in \eqref{motmap}.

The content of \cite{CL08} is an integration formalism for constructible motivic functions with respect to definable volume forms satisfying in particular a change of variables formula and a Fubini theorem.

The forms one integrates in this theory are definable forms $|\omega|$ as defined in \cite[Section 8.2]{CL08}. On a definable subassignment associated with a $k((t))$-variety $X$ any top-dimensional form $\omega$ on the smooth locus $X^{sm}$ of $X$ defines a definable form $|\omega|$ and we have an identification $|\omega| = |\omega'|$ if  $\omega = f \omega'$ for some $f: X^{sm} \to \BA^1$ with $|f| \equiv 1$.  

The definable forms in this paper will thus typically be constructed as follows. Given $X/k((t))$, an open cover $X^{sm} = \bigcup_i U_i$ of its smooth locus and forms $\omega_i$ on $U_i$ such than on $U_i \cap U_j$ we have $\omega_i = f_{ij} \omega_j$ for a function $f_{ij}$ satisfying $|f_{ij}| \equiv 1$, then the $U_i$ glue together to a definable form on $X$.

Given a function $f \in \calc(X)$, integrable with respect to a definable form $|\omega|$, one obtains a class

\[\int_{X} f |\omega| \in \calc_k.\]

In the following if we write $\int_{X} f |\omega| = a$ we mean $f$ is integrable with respect to $|\omega|$ and $\int_{X} f |\omega| = a$.

One way to compute motivic integrals is by means of Néron smoothenings. 
Given $\CX /k[[t]]$ separated, flat and of finite type with smooth generic fiber, there exists a smooth finite type $k[[t]]$-scheme $\CX$ and a $k[[t]]$-morphism $\CY \to \CX$, which is generically an isomorphism and for every extension $K/k$ induces a bijection $\CY(K[[t]]) \xrightarrow{\sim}  \CX(K[[t]])$ \cite[Theorem 3.1.3]{BoschLutkebohmertRaynaud90}. For any volume form $\omega$ on $\CX_{k((t))}$ one then has \cite[Proposition 12.6]{cluckers2015motivic}

\begin{equation}\label{weil} \int_{\CX} |\omega| = \BL^{-\dim \CX} \sum_{C \in \pi_0(\CX_k)}\BL^{-\ord_C \omega}[C]. \end{equation}



\subsubsection{Motivic Igusa Zeta functions}\label{mizf}

Consider a non-constant polynomial map $f:\BA^m \to \BA^1$ over $k$ with $f(0)=0$. The  (local) $n$-th restricted contact locus of $f$ is defined as
\begin{equation}
\label{def:contactlocus}
X^0_{f,n} = \{ x \in \BA^m(k[t]/t^{n+1})_0 \ |\ f(x) = t^n \in k[t]/t^{n+1} \},
\end{equation}
where $\BA^m(k[t]/t^{n+1})_0 $ denotes the locus of jets reducing to $0$ modulo $t$.

The \textit{motivic Igusa zeta function of $f$} is defined as the generating series 
\begin{equation}\label{eq:IgusaZ}
Z_f(T) = \sum_{n \geq 1} [X^0_{f,n}]\BL^{-mn} T^n \in \CM_k[[T]]. \end{equation}
In fact, using an embedded resolution of singularities one can show that $Z_f(T) \in \calc_{k}(T)$  \cite{DL98}. There is an additional $\mu_n$-action on each $X^0_{f,n}$ but we will ignore it in this paper. 

Our goal is to give an expression for the plethystic exponential $\Exp(Z_f(T)) \in \calc_{k}[[T]]$. For this we'll need a slightly different way of writing the Igusa zeta function following \cite{NS07}.

Let $\omega_m=dx_1\wedge \dots \wedge dx_m$ be the standard form on $\BA^m$ and consider the relative $m-1$-form $\omega_m /df$ on the smooth locus of $f$. 


Next consider the $k[[t]]$-scheme $\CX_f$ obtained as the pullback of $\BA^m \xrightarrow{f} \BA^1$ under the completion morphism $\Spec(k[[t]]) \to \Spec(k[t])$, explicitly
\[ \CX_f = \Spec(k[[t]][x_1,\dots,x_m] / f-t). \]
Then $\omega_m /df$ induces a volume form $\omega_f$ on the generic fiber of $\CX_f$ called the \textit{Gelfand-Leray form}.

For any $n \geq 1$ we write $\CX^0_{f,n}$ for the definable subassignment whose $K$-points are given by $\{ x \in \CX_f(K[[t^{1/n}]])\ | \ x_{K} = 0  \}$. 
\begin{lemma}\cite[Lemma 9.9]{NS07}\label{clin} For any $n\geq 1$ we have
\[ \int_{\CX^0_{f,n}} |\omega_f|_n = \BL^{-(n+1)(m-1)} [X^0_{f,n}] \in \calc_k.\]
\end{lemma}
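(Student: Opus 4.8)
The plan is to compute the integral on the left-hand side by parametrizing $K$-points of $\CX^0_{f,n}$ in terms of $K$-points of the contact locus $X^0_{f,n}$, and to track how the Gelfand-Leray form $\omega_f$ transforms under this identification. Recall that a point of $\CX_f(K[[t^{1/n}]])$ reducing to $0$ modulo $t^{1/n}$ is the same as an $m$-tuple $x = (x_1(s),\dots,x_m(s)) \in (sK[[s]])^m$, where $s = t^{1/n}$, satisfying $f(x(s)) = s^n$. First I would set up the truncation map sending such an $x$ to its reduction $\overline{x} \in \BA^m(K[s]/s^{n+1})_0$; by definition of $X^0_{f,n}$ (with $t$ there playing the role of $s^n = t$ here, after renaming), the condition $f(x) = s^n$ modulo $s^{n+1}$ is exactly membership in $X^0_{f,n}$. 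So the truncation map realizes $\CX^0_{f,n}$ as a definable family over $X^0_{f,n}$, with fibers the further Taylor coefficients of $x$ constrained by $f(x) = s^n$ exactly (not just mod $s^{n+1}$).

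The heart of the computation is then a change-of-variables / fibration argument. I would stratify (or work pointwise over) $X^0_{f,n}$: on the locus where, say, $\partial f/\partial x_m$ is a unit times the appropriate power of $s$ — which one checks holds generically on each contact locus because $f(x) = s^n$ forces the jet to lie off the singular locus of $f$ in a controlled way — the equation $f(x) = s^n$ can be solved for $x_m$ as a function of $x_1,\dots,x_{m-1}$ and $s$ via the implicit function theorem over $K[[s]]$. This exhibits $\CX^0_{f,n}$ locally as an open in $(sK[[s]])^{m-1}$, and under this chart the Gelfand-Leray form $\omega_f = (dx_1\wedge\dots\wedge dx_m)/df$ restricts to $\pm dx_1 \wedge \dots \wedge dx_{m-1}$ divided by $\partial f/\partial x_m$. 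Integrating $dx_1\wedge\dots\wedge dx_{m-1}$ over $(sK[[s]])^{m-1}$ contributes, after the Cluckers–Loeser normalization with respect to the uniformizer $s = t^{1/n}$, a factor $\BL^{-(m-1)}$ from the measure of $sK[[s]]$ in each of the $m-1$ coordinates — giving $\BL^{-(n+1)(m-1)}$ once one accounts for the relation between the $s$-adic valuation and the $t$-adic valuation (a factor of $n$) together with the $+1$ coming from integrating over $sK[[s]]$ rather than $K[[s]]$. The remaining factor $[X^0_{f,n}]$ appears because the base of the fibration is exactly the contact locus and the contribution of $|df|^{-1}$ is absorbed: on the relevant stratum $\ord_s(\partial f/\partial x_m(x)) $ is constant and matches the jet-level order, so $|\omega_f|_n$ is, up to the universal power of $\BL$, the pullback of the constant function $1$ from $X^0_{f,n}$, whose motivic integral (counting measure) is $[X^0_{f,n}]$ times the measure of the space of lifts.

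The main obstacle I anticipate is bookkeeping the powers of $\BL$ correctly through the ramified base change $k[[t]] \to k[[t^{1/n}]]$: one must be careful whether valuations and the form are measured with respect to $t$ or $t^{1/n}$ (hence the $|\cdot|_n$ notation), and the factor $(n+1)(m-1)$ versus a naive $n(m-1)$ or $(m-1)$ is precisely where sign/normalization errors creep in. A clean way to handle this is to invoke \eqref{weil}: apply a Néron smoothening to the $k[[t^{1/n}]]$-scheme $\CX_f \times_{k[[t]]} k[[t^{1/n}]]$ restricted to the locus reducing to $0$, identify its special fiber components with (a stratification of) $X^0_{f,n}$, and read off $\ord_C \omega_f$ on each component from the resolution used to define $X^0_{f,n}$ in the first place; this reduces the claim to matching the exponents from Denef–Loeser's resolution computation of $Z_f(T)$ with the Néron model data, which is the content of \cite[Lemma 9.9]{NS07} being cited. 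Alternatively — and this is likely the cleanest route to present — I would simply reduce to an embedded resolution $h: Y \to \BA^m$ of $f$, pull everything back, and compare term-by-term with the standard motivic change-of-variables formula applied to both $\int_{\CX^0_{f,n}}|\omega_f|_n$ and the resolution expression for $[X^0_{f,n}]$, checking that the discrepancy is the single monomial $\BL^{-(n+1)(m-1)}$.
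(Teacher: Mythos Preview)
Your alternative route --- pass to an embedded resolution of $f$, compute both $\int_{\CX^0_{f,n}}|\omega_f|_n$ and $[X^0_{f,n}]$ on the resolution via \eqref{weil} and the Denef--Loeser formula, and match exponents --- is exactly what the paper does (and what \cite{NS07} does). So the proposal is correct once you commit to that route.

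The direct fibration argument you lead with, however, has a real gap. You write that on a suitable stratum $\partial f/\partial x_m$ is ``a unit times the appropriate power of $s$'' and that $\ord_s(\partial f/\partial x_m(x))$ is constant there, so that $|\omega_f|_n$ becomes the pullback of a constant. But for a genuinely singular $(f,0)$ this order is \emph{not} determined by the $n$-jet alone: two arcs with the same image in $X^0_{f,n}$ can have different $\ord_s(\partial f/\partial x_m)$, and your truncation map need not have constant-measure fibres. The implicit function theorem over $K[[s]]$ requires $\partial f/\partial x_m$ to be an actual unit, not merely $s^k$ times a unit; with $k>0$ you cannot solve $f(x)=s^n$ for $x_m$ as a power series in the other variables without further preparation. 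Stratifying finely enough to make your argument go through is tantamount to choosing a log-resolution, at which point you are back to the alternative approach. Your smooth-point bookkeeping (the $\BL^{-(m-1)}$ check) is fine, but that is precisely the case where $\partial f/\partial x_m$ \emph{is} a unit. I would drop the direct argument and present only the resolution computation.
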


The proof of Lemma \ref{clin} uses an embedded resolution of singularities $f$ to compute both sides independently. Technically a different integration theory is used in \textit{loc. cit.}, but the same argument can be applied in the Cluckers-Loeser theory using \eqref{weil}.

\section{Arcs and Gelfand forms on Hilbert schemes}\label{gelin}

Let $f\in k[X,Y]$ be reduced and nonconstant with an isolated singularity at $0$. We write $C=\{f=0\} \subset \BA^2$ for the associated curve. Consider the relative curve $\CX_f /k[[t]]$ as in Section \ref{mizf} and the relative Hilbert-Chow morphism
\[h: \Hilb_n(\CX_f) \to \Sym_n(\CX_f). \]
Since the generic fiber of $\CX_f$ is smooth, $h_{|k((t))}$ is an isomorphism. Furthermore $h$ is proper and thus we have for every $K/k$ and every $K[[t]]$-point of $\Sym_n(\CX_f)$ a unique lift to a $K[[t]]$-point of $\Hilb_n(\CX_f)$, in other words the definable subassignments associated with $ \Sym_n(\CX_f)$ and $\Hilb_n(\CX_f)$ are isomorphic. 

Recall that $\Hilb_n^1(f) \subset \Hilb_n(f)$ denotes the $1$-generator locus. We further write $\Hilb_n^1(f)_0 =  \Hilb_n^1(f)\cap h^{-1}(n[0])$ for the closed subscheme of ideals supported at the origin. As a direct consequence of Lemma \ref{burch} we have

\begin{corollary}\label{hbthm} For any $K/k$ and any $x \in \Hilb_n(\CX_f)(K[[t]])$ such that $(h_*x)_{|K} = n[0] \in \Sym_n(C)$ we have $x_{|k} \in \Hilb_n^1(f)_0$.
\end{corollary}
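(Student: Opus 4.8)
The plan is to deduce Corollary \ref{hbthm} directly from Lemma \ref{burch}. The point is that the hypothesis $(h_*x)_{|K} = n[0]$ says exactly that the special fiber of the section $x$ is an ideal supported at the origin, and we want to conclude it has at most $2$ generators — i.e.\ lies in $\Hilb_n^{\leq 2}(f)$ — and then explain why this is the same as $\Hilb_n^1(f)_0$. Wait, the statement actually claims $x_{|k}\in\Hilb_n^1(f)_0$, the \emph{principal} ideals, so we need more than just $\le 2$ generators.

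Let me reconsider. The relative Hilbert scheme here is $\Hilb_n(\CX_f)$ where $\CX_f = \Spec(k[[t]][x,y]/(f-t))$, a curve over $k[[t]]$. A $K[[t]]$-point $x$ is an ideal $I\subset K[[t]][x,y]/(f-t)$ with $R/I$ flat of length $n$ over $K[[t]]$. Pulling back to the ambient $\BA^2_{K[[t]]}$, i.e.\ lifting $I$ to $\widetilde I\subset K[[t]][x,y]$ containing $f-t$, we are exactly in the situation of Lemma \ref{burch} with $k=1$, except there the deformation is $f-t^k$; here $k=1$. Actually we must be careful: in Lemma \ref{burch} the section is over $k[[t]]$ and the local ring is localized at $\frm=(x,y,t)$; the hypothesis $(h_*x)_{|K}=n[0]$ is precisely what guarantees the special fiber of $\widetilde I$ is supported at the origin, so that $\widetilde I\subset\frm$ and we may localize at $\frm$. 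With $k=1$, Lemma \ref{burch} gives that $I$ has at most $1$ generator (since $g\le 1$ and if $g=1$ then $f-t$ is a generator, but modulo $t$ that disappears) — or rather it gives $I\in\Hilb_n^{\le 1}(f)$, which is $\Hilb_n^1(f)$, and supported at $0$ it is $\Hilb_n^1(f)_0$.

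So here is the proof I would write.

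\begin{proof}
Let $x\in \Hilb_n(\CX_f)(K[[t]])$ with $(h_*x)_{|K}=n[0]\in\Sym_n(C)$. By definition $x$ corresponds to an ideal $I\subset S:=K[[t]][x_1,x_2]/(f-t)$ with $S/I$ flat of length $n$ over $K[[t]]$, and the condition $(h_*x)_{|K}=n[0]$ says that $S/I\otimes_{K[[t]]}K$ is a length-$n$ $K$-algebra supported at the origin of $\BA^2_K$. Lift $I$ to an ideal $\widetilde I\subset K[[t]][x_1,x_2]$ containing $f-t$; by the support condition $\widetilde I$ is contained in the maximal ideal $\frm=(x_1,x_2,t)$, so we may localize and apply Lemma \ref{burch} with $k=1$ (the curve $f-t^1$). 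The lemma produces a Hilbert--Burch presentation $0\to R^g\xrightarrow{A}R^{g+1}\xrightarrow{G}\widetilde I\to 0$ with entries of $A$ in $\frm$ and $g\le 1$. If $g=0$ then $\widetilde I$ is principal. If $g=1$, the last paragraph of the proof of Lemma \ref{burch} shows that $f-t$ is one of the two generators of $\widetilde I$, so $I=\widetilde I/(f-t)$ is generated by the image of the remaining generator, hence principal in $S$. In either case $x_{|k}\in\Hilb_n^1(f)$, and since its support is $n[0]$ we get $x_{|k}\in\Hilb_n^1(f)_0$.
\end{proof}

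The main obstacle is bookkeeping rather than mathematics: one has to be careful that Lemma \ref{burch} as stated is over $k[[t]]$ with base point the closed point, whereas the corollary is phrased for arbitrary field extensions $K/k$ and for a point of the relative Hilbert scheme of $\CX_f$ (which already has $f-t$ in its defining ideal). I would make sure the reduction to the setting of Lemma \ref{burch} — lifting $I$ to $\widetilde I\ni f-t$ and observing that the support hypothesis forces $\widetilde I\subset\frm$ — is spelled out, and that the case analysis on $g$ (the number of Hilbert--Burch relations) correctly yields principality of $I$ after quotienting by $f-t$. One subtlety worth double-checking: the lemma is stated for $\Hilb^n(f-t^k)\to\Spec(k[[t]])$, a section over the full $k[[t]]$, while here $K$ may be a non-trivial extension; but the proof of Lemma \ref{burch} only uses regularity of $\Spec(K[[t]])$ and the Auslander--Buchsbaum and Hilbert--Burch theorems, all of which are insensitive to the base field, so it applies verbatim over $K$.
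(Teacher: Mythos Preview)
Your proof is correct and follows exactly the paper's approach: the corollary is stated as an immediate consequence of Lemma~\ref{burch} with $k=1$, and your write-up just spells out the bookkeeping (lifting to $\widetilde I\ni f-t$, using the support hypothesis to localize at $\frm$, and passing from $K$ to $k$). One small remark: the case $g=0$ cannot occur, since a principal ideal in the three-dimensional regular local ring $R$ has height at most $1$, whereas $R/\widetilde I$ being finite over $K[[t]]$ forces $\widetilde I$ to have height $2$; so only the case $g=1$ is relevant, and your treatment of it is exactly the argument in the last paragraph of the proof of Lemma~\ref{burch}.
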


Next we use Corollary \ref{hbthm} to construct a definable volume form on $\Sym_n(\CX_f^0)$, the definable subassignment  given by
\[K \mapsto \{ x \in \Sym_n(\CX_f)(K[[t]]) \ |\ x_K = n[0]   \}.\]
First, by Corollary \ref{hbthm} we have an isomorphism of $\Sym_n(\CX_f^0)$ with $\Hilb_n(\CX_f^0) $ defined by
\[K \mapsto \{ x \in \Hilb_n(\CX_f)(K[[t]]) \ |\ x_{|K} \in \Hilb_n^1(C)_0 \}.\] 
Next recall the open covering $\Hilb_n(\BA^2) = \bigcup_{\lambda \vdash n} U_\lambda$ from Section \ref{ha2}. Since $\Hilb_n(\CX_f) \subset \Hilb_n(\BA^2)$ we obtain an open covering of $\Hilb_n(\CX_f)$ by setting $U_{f,\lambda} = U_\lambda \cap \Hilb_n(\CX_f)$. By Lemma \ref{lcih} we can also describe $U_{f,\lambda}$ as
\[ U_{f,\lambda} = \{f^{[n]}_\lambda = 0\} \subset U_\lambda. \]
Now since $\Hilb_n(\BA^2)$ is algebraic symplectic there exists in particular a global non-vanishing top-form $\omega^{[n]}$. We define the \textit{Gelfand form} $\omega_\lambda^{gel}$ on the smooth locus $U_{f,\lambda}^{sm}\subset U_{f,\lambda}$ as
\[ \omega_\lambda^{gel} = \omega^{[n]}/ f_\lambda^{[n]*}\nu, \]
where $\nu=dy_1\wedge \dots \wedge dy_n$ is the standard form on $\BA^n$.

\begin{proposition} The definable forms $| \omega_\lambda^{gel}|$ glue together to a give a non-vanishing definable form $|\omega^{gel}_n|$ on $\Sym_n(\CX_f^0) \cong \Hilb_n(\CX_f^0)$
\end{proposition}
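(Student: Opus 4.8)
The plan is to verify directly that on overlaps $U_{f,\lambda} \cap U_{f,\mu}$ the two Gelfand forms $\omega_\lambda^{gel}$ and $\omega_\mu^{gel}$ differ by a function of motivic absolute value identically $1$, so that by the gluing criterion for definable forms recalled after \eqref{weil} they assemble into a single definable form on $\Hilb_n(\CX_f)$; restricting to the subassignment $\Hilb_n(\CX_f^0)\cong \Sym_n(\CX_f^0)$ then gives $|\omega_n^{gel}|$. First I would compute the comparison on an overlap. Since $\omega^{[n]}$ is a fixed global nonvanishing top-form on $\Hilb_n(\BA^2)$, the ratio $\omega_\lambda^{gel}/\omega_\mu^{gel}$ equals $f_\mu^{[n]*}\nu / f_\lambda^{[n]*}\nu$, i.e. the Jacobian of the change of trivialization relating the two systems of coordinates $f^{[n]}_{\lambda}$ and $f^{[n]}_{\mu}$ on the tautological bundle $\CT$. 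This change of trivialization is the transition matrix of $\CT$ between the charts $U_\lambda$ and $U_\mu$, expressed in the bases $\{X^pY^q\}_{(p,q)\in M_\lambda}$ and $\{X^pY^q\}_{(p,q)\in M_\mu}$; call its determinant $g_{\lambda\mu}$, a regular invertible function on $U_\lambda\cap U_\mu$.

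The key point is then that $|g_{\lambda\mu}|\equiv 1$ on the relevant subassignment. There are two natural ways to see this. One is to use that the determinant of $\CT$ is (up to the canonical bundle, which is trivial since $\Hilb_n(\BA^2)$ carries a nonvanishing $\omega^{[n]}$) controlled by an everywhere-nonvanishing section, so the transition functions $g_{\lambda\mu}$ are units in $\CO(U_\lambda\cap U_\mu)^\times$; but a regular unit $g$ on a $k[[t]]$-scheme need not have $|g(x)|\equiv 1$ for all $K[[t]]$-points unless $g$ extends to a unit on an integral model, which requires an argument. The cleaner route, which I would take, is the one foreshadowed in the construction: on the subassignment $\Hilb_n(\CX_f^0)$ every point $x$ reduces mod $t$ to a point of $\Hilb_n^1(f)_0$ by Corollary \ref{hbthm}, where the ideal is principal, generated by a single $g\in \CO_C$. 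In that situation the sections $\{X^pY^q\}$ for $(p,q)$ in a staircase $M_\lambda$ and those for $M_\mu$ are \emph{both} bases of the free rank-$n$ module $\CO_C/(g)$, and one checks — using \eqref{ftof}, which expresses $f_M^{[n]}$ via the Vandermonde-type matrix $B_M(x,y)$ with determinant $\Delta_M$ — that the change of basis determinant is $\Delta_\mu/\Delta_\lambda$ evaluated at the support, whose $t$-order is $0$ because both $\Delta_\lambda$ and $\Delta_\mu$ are nonvanishing at a point of $\Hilb_n^1(f)_0$ (the support lies in a single reduced fiber where the coordinate functions $X^pY^q$ over $M_\lambda$ resp. $M_\mu$ span). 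Hence $\ord_t g_{\lambda\mu}(x)=0$ for every $x\in (U_{f,\lambda}\cap U_{f,\mu})(K[[t]])$ lying in the subassignment, i.e. $|g_{\lambda\mu}|\equiv 1$ there.

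Having established $|g_{\lambda\mu}|\equiv 1$, the gluing lemma for definable forms gives a well-defined definable form $|\omega_n^{gel}|$ on $\Sym_n(\CX_f^0)\cong\Hilb_n(\CX_f^0)$. Non-vanishing is immediate: $\omega^{[n]}$ is nowhere zero and $f_\lambda^{[n]*}\nu$ is a nonzero top-form on the smooth locus $U_{f,\lambda}^{sm}$ (this is exactly where $f_\lambda^{[n]}=0$ cuts out $U_{f,\lambda}$ transversally, by Lemma \ref{lcih} together with the smoothness of the generic fiber of $\CX_f$), so $\omega_\lambda^{gel}$ is a generically nonvanishing top-form on each chart, hence so is the glued form. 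The main obstacle I anticipate is the bookkeeping in the second paragraph: pinning down precisely that the transition determinant $g_{\lambda\mu}$ has $t$-order $0$ on the subassignment rather than merely being a generic unit — this is where Corollary \ref{hbthm} (principality of the mod-$t$ reduction) is essential, and one must be careful that the two staircases give compatible trivializations of $\CT$ over the locus where the reduced ideal is principal, so that the comparison really does reduce to a ratio $\Delta_\mu/\Delta_\lambda$ of nonvanishing Vandermonde determinants.
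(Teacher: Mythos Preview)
Your overall strategy---show the transition function has motivic absolute value $1$ on the relevant subassignment---is the same as the paper's, but your execution contains a genuine error and misses the simpler route the paper takes.

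The paper's argument is one line: the two forms $\omega_\lambda^{gel},\omega_\mu^{gel}$ are both nowhere-vanishing top-forms on the smooth open $U_{f,\lambda}^{sm}\cap U_{f,\mu}^{sm}$, so their ratio $u_{\lambda\mu}$ is a regular \emph{unit} there. By Corollary~\ref{hbthm} together with Lemma~\ref{sml} (which you never invoke), every $K[[t]]$-point of $\Hilb_n(\CX_f^0)$ factors through this smooth locus, hence $u_{\lambda\mu}(x)\in K[[t]]^\times$ and $|u_{\lambda\mu}(x)|=1$. There is no need to identify $u_{\lambda\mu}$ explicitly.

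Your attempt to identify the transition as $\Delta_\mu/\Delta_\lambda$ ``evaluated at the support'' and then argue that it has $t$-order $0$ ``because both $\Delta_\lambda$ and $\Delta_\mu$ are nonvanishing at a point of $\Hilb_n^1(f)_0$'' is wrong. A point of $\Hilb_n^1(f)_0$ is supported at $n[0]$; plugging $(x_i,y_i)=(0,0)$ for all $i$ into $B_M(x,y)$ gives a matrix with identical rows, so $\Delta_\lambda=\Delta_\mu=0$ there. The individual Vandermonde determinants are \emph{not} nonvanishing on this locus. What is true (via Haiman, as in Lemma~\ref{hev}) is that the \emph{ratio} $\Delta_\mu/\Delta_\lambda$ extends to a regular function on $U_\lambda$ which is a unit on $U_\lambda\cap U_\mu$; but once you say that, you are back to the paper's argument, and the detour through the $\Delta_M$ has bought nothing.

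Ironically, your sentence ``the sections $\{X^pY^q\}$ for $M_\lambda$ and $M_\mu$ are both bases of $\CO_C/(g)$'' already contains the correct reason: it says precisely that the transition matrix of $\CT$ is invertible at the special point, hence its determinant lies in $K[[t]]^\times$. You then obscure this by passing to the incorrect $\Delta$-claim. Also note that Lemma~\ref{sml} is essential and missing from your outline: it is what guarantees the Gelfand forms (defined only on $U_{f,\lambda}^{sm}$) are actually defined at every $K[[t]]$-point of the subassignment, not just that the transition is a unit.
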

\begin{proof} For any two partitions $\lambda,\mu \vdash n$ the two forms $\omega_\lambda^{gel}, \omega_\mu^{gel}$ differ by an invertible function $u_{\lambda \mu}$ on $U_{f,\lambda}^{sm} \cap U_{f,\mu}^{sm}$.  By combining Lemma \ref{sml} and Corollary \ref{hbthm} we see that for any $K/k$, any $x\in \Hilb_n(\CX_f^0)(K[[t]])$ factors through the smooth locus of $\Hilb_n(\CX_f)$ and thus $|u_{\lambda \mu}(x)| = 1$ which implies the proposition.
\end{proof}

\begin{theorem}\label{gelhil} For any $n\geq 1$ we have
\[ \int_{\Sym_n \CX_f^0} |\omega^{gel}_n| = \BL^{-n} [\Hilb_n^1(f)_0]. \]
\end{theorem}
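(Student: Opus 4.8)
The plan is to compute the integral chart-by-chart using the open cover $\Hilb_n(\CX_f) = \bigcup_{\lambda \vdash n} U_{f,\lambda}$ and the description $U_{f,\lambda} = \{f^{[n]}_\lambda = 0\} \subset U_\lambda$ from Lemma \ref{lcih}, then assemble the pieces via a Néron-smoothening type formula in the style of \eqref{weil}. First I would restrict attention to $\Hilb_n(\CX_f^0)$, the definable subassignment of $K[[t]]$-points of $\Hilb_n(\CX_f)$ reducing to a point of $\Hilb_n^1(f)_0$; by Corollary \ref{hbthm} and Lemma \ref{sml} this is the same as the subassignment of $\Sym_n\CX_f^0$, and every such point factors through the smooth locus of $\Hilb_n(\CX_f)$, so that the Gelfand form $|\omega_n^{gel}|$ is honestly a nonvanishing definable volume form there. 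The key observation is that $\Hilb_n(\BA^2)$ is smooth of dimension $2n$ with a global nonvanishing symplectic $2n$-form $\omega^{[n]}$, and that near a point of $\Hilb_n^1(f)_0$ the $n$ functions $f^{[n]}_{\lambda}$ cut out $\Hilb_n(\CX_f)$ as a smooth codimension-$n$ subvariety; hence $(f^{[n]}_\lambda, y_1, \dots, y_n)$, or rather a suitable subset of the Haiman coordinates together with $f^{[n]}_\lambda$, form a local coordinate system, and the Gelfand form $\omega^{[n]}/f_\lambda^{[n]*}\nu$ is (up to a unit) the standard form in the remaining $n$ coordinates.

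Next I would set up the integral over $\Hilb_n(\CX_f^0)$ as the fiber integral of $|\omega^{[n]}|$ over the map $f^{[n]}: \Hilb_n(\CX_f) \to \BA^n$ near $0$. Concretely, consider the $k[[t]]$-scheme $\CH_n$ obtained by pulling back $\Hilb_n(\BA^2) \xrightarrow{f^{[n]}} \BA^n$ along the diagonal completion $\Spec(k[[t]]) \to \BA^n$, $t \mapsto (0,\dots,0,t)$ — no, more precisely along $t\mapsto$ something making $\CH_n = \Hilb_n(\CX_f)$, and observe that $\Hilb_n(\CX_f^0)$ is the locus of $k[[t]]$-points reducing to $\Hilb_n^1(f)_0$. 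I claim this scheme, restricted to a neighborhood of $\Hilb_n^1(f)_0$, is already smooth over $k[[t]]$ with special fiber $\Hilb_n^1(f)_0$: indeed $\Hilb_n^1(f)$ is the smooth locus of $\Hilb_n(f)$ by Lemma \ref{sml}, and the relative Hilbert scheme $\Hilb_n(\CX_f)\to \Spec k[[t]]$ is smooth of relative dimension $n$ along that locus because $f^{[n]}$ is a submersion there transverse to $t=0$. Then \eqref{weil} applies directly: $\int |\omega_n^{gel}| = \BL^{-n}\sum_{C}\BL^{-\ord_C \omega_n^{gel}}[C]$ over the connected components $C$ of the special fiber $\Hilb_n^1(f)_0$, and since $\omega_n^{gel}$ is a nonvanishing relative top form, each $\ord_C\omega_n^{gel} = 0$, giving $\BL^{-n}[\Hilb_n^1(f)_0]$.

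The main technical obstacle, and the point I would spend most care on, is justifying that $\Hilb_n(\CX_f)$ is smooth over $k[[t]]$ of relative dimension $n$ in a neighborhood of $\Hilb_n^1(f)_0$ inside the special fiber, so that \eqref{weil} — which is stated for $\CX/k[[t]]$ smooth of finite type — can be invoked (after a Néron smoothening, or directly). This amounts to: (i) $\Hilb_n(f)$ is smooth of dimension $n$ exactly along the $1$-generator locus (Lemma \ref{sml}), and (ii) the deformation to $\CX_f$ is a ``good'' one, i.e. the total space $\Hilb_n(\CX_f)$ is smooth over $k[[t]]$ there — equivalently the section $f^{[n]} - t^{(n)}$ (where $t^{(n)}$ denotes the appropriate constant section cutting out the deformed curve) is transverse to the zero section at such points. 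One way to see (ii): near a point of $\Hilb_n^1(f)_0$ the curve $\CX_f$ is a smooth family, so $\Hilb_n(\CX_f)$ restricted there is the relative Hilbert scheme of a smooth curve, which is smooth over the base. Alternatively, I would argue on Haiman charts $U_{f,\lambda}$: choose $\lambda$ so that a given point lies in $U_{f,\lambda}$, note $U_{f,\lambda} = \{f^{[n]}_\lambda = 0\}$, and use that the differentials $df^{[n]}_{\lambda,i}$ are independent at $1$-generator points (which follows from Lemma \ref{sml} applied in the chart) — this gives local coordinates exhibiting both the smoothness and the coordinate form of $\omega_\lambda^{gel}$ needed. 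A second, smaller point to nail down is the compatibility with the globalization of \eqref{weil} over a possibly non-proper $k[[t]]$-scheme; here one either restricts to a quasi-compact open containing $\Hilb_n^1(f)_0$ (which suffices since the integrand is supported there) or invokes that $\Hilb_n^1(f)_0$ is proper.
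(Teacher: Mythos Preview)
Your proposal is correct and follows essentially the same route as the paper. The paper's own proof is a single line --- ``Since $\Sym_n(\CX_f^0) \cong \Hilb_n(\CX_f^0)$ this follows from \eqref{weil}'' --- and you are simply unpacking the two implicit ingredients: that every $K[[t]]$-point of $\Hilb_n(\CX_f^0)$ reduces into the smooth locus $\Hilb_n^1(f)_0$ (Corollary \ref{hbthm} and Lemma \ref{sml}), so a N\'eron smoothening has special fiber $\Hilb_n^1(f)_0$, and that the Gelfand form is by construction a generically non-vanishing relative top form with $\ord_C\omega_n^{gel}=0$ along each component, so \eqref{weil} yields $\BL^{-n}[\Hilb_n^1(f)_0]$.
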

\begin{proof}
Since $\Sym_n(\CX_f^0) \cong \Hilb_n(\CX_f^0)$ this follows from  \eqref{weil}. 
\end{proof}

We also record the following for later.

\begin{lemma}\label{hev} Let $\CI \in U_\lambda(K[[t]])$ and $(x,y) \in (\BA^2)^n(\overline{K((t))})$ a preimage of $\supp(\CI_{|K((t))})\in \Sym_n(\BA^2)(K((t)))$ under the Hilbert-Chow morphism. Then 
\[ \ord_t(\Delta_\lambda(x,y)) \leq \ord_t(\Delta_M(x,y)),\]
for all $M \subset \BN^2$ with $|M| = n$. 
\end{lemma}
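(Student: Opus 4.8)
The plan is to reduce the statement to a linear-algebra fact relating the matrix $B_\lambda(x,y)$ to $B_M(x,y)$ through the trivialization of the tautological bundle $\CT$ over the chart $U_\lambda$. Recall from Section \ref{ha2} that for $\CI \in U_\lambda$ the monomials $\{X^pY^q\}_{(p,q)\in M_\lambda}$ form a basis of $k[X,Y]/\CI$, so for any monomial $X^aY^b$ we may expand its image in $k[X,Y]/\CI$ in this basis; the coefficients are regular functions on $U_\lambda$ (they are precisely the components of $(X^aY^b)^{[n]}_\lambda$). Assembling these for $(a,b)$ ranging over any other size-$n$ set $M$ gives an $n\times n$ matrix $P_{M,\lambda}$ with entries in $\CO(U_\lambda)$, such that $B_M(x,y) = B_\lambda(x,y)\, P_{M,\lambda}(\CI)$ whenever $(x,y)$ lies over $\supp(\CI)$ --- this is just the statement that evaluating the two bases of functions at the support points is compatible with the change-of-basis matrix, i.e. the analogue of \eqref{ftof} applied to each monomial $X^aY^b$, $(a,b)\in M$, in place of a single $f$.

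First I would make this identity $B_M(x,y) = B_\lambda(x,y) P_{M,\lambda}(\CI)$ precise. The key point is that $P_{M,\lambda}(\CI)$ has entries in $\CO(U_\lambda)$, hence when we specialize $\CI$ to a $K[[t]]$-point its entries lie in $K[[t]]$ and in particular have non-negative $t$-order; the same is then true after base-changing along $K[[t]] \hookrightarrow \overline{K((t))}$. Taking determinants in $B_M(x,y) = B_\lambda(x,y) P_{M,\lambda}(\CI)$ gives
\[
\Delta_M(x,y) = \Delta_\lambda(x,y)\cdot \det P_{M,\lambda}(\CI),
\]
and since $\det P_{M,\lambda}(\CI) \in K[[t]]$ we get $\ord_t \det P_{M,\lambda}(\CI) \geq 0$, hence $\ord_t(\Delta_M(x,y)) = \ord_t(\Delta_\lambda(x,y)) + \ord_t\det P_{M,\lambda}(\CI) \geq \ord_t(\Delta_\lambda(x,y))$, which is exactly the claimed inequality.

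The one genuine subtlety --- and the step I expect to need the most care --- is the possibility that $\Delta_\lambda(x,y) = 0$ or $\Delta_M(x,y) = 0$ in $\overline{K((t))}$, i.e. that the support points are not distinct, so that the factorization-of-orders argument degenerates (with $\ord_t = \infty$). If $\Delta_\lambda(x,y) \neq 0$ the argument above is clean. If $\Delta_\lambda(x,y) = 0$, then since $\CI_{|K((t))}$ lies in $U_\lambda$ over the generic point, the support is actually a reduced length-$n$ subscheme of $\BA^2_{\overline{K((t))}}$ --- being in the chart $U_\lambda$ forces $\Delta_\lambda \neq 0$ at any point where the monomials $X^pY^q$, $(p,q)\in M_\lambda$, are linearly independent on the support, which holds at a reduced point; so in fact $\Delta_\lambda(x,y)\neq 0$ automatically for a preimage of $\supp(\CI_{|K((t))})$ under Hilbert--Chow on the generic fiber, and one only has to note that such a preimage over $\overline{K((t))}$ exists because $h_{|K((t))}$ is an isomorphism onto its image and $\Sym_n(\BA^2)(\overline{K((t))}) \to \Sym_n(\BA^2)$ is surjective on geometric points. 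So the lemma follows once this non-vanishing of $\Delta_\lambda$ on the chart is recorded; the matrix identity $B_M = B_\lambda P_{M,\lambda}$ and the determinant computation then finish it.
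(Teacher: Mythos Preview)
Your argument is correct and coincides with the paper's: the paper simply cites Haiman (Proposition~2.6 of \emph{$t,q$-Catalan numbers and the Hilbert scheme}) for the fact that $\Delta_M/\Delta_\lambda$ extends to a regular function on $U_\lambda$, and your matrix identity $B_M = B_\lambda\, P_{M,\lambda}$ with $P_{M,\lambda}\in M_n(\CO(U_\lambda))$ is precisely a direct proof of that statement, with $\det P_{M,\lambda}$ being the regular extension.

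One small correction: your treatment of the degenerate case $\Delta_\lambda(x,y)=0$ is both unnecessary and contains a slip. Membership in $U_\lambda$ does \emph{not} force the support to be reduced (the monomial ideal $I_\lambda$ itself lies in $U_\lambda$ and has support $n[0]$), so your claim that $\Delta_\lambda(x,y)\neq 0$ automatically is false in general. But this does not matter: the identity $\Delta_M(x,y)=\Delta_\lambda(x,y)\cdot\det P_{M,\lambda}(\CI)$ that you already established shows directly that $\Delta_\lambda(x,y)=0$ forces $\Delta_M(x,y)=0$, so both orders are $\infty$ and the inequality is trivial. You can simply delete the last paragraph of your proposal.
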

\begin{proof}
This follows since on $U_\lambda$ the fraction $\Delta_M /\Delta_\lambda$ defines a regular function\cite[Proposition 2.6.]{Haimantq}, thus $\Delta_M(x,y) / \Delta_\lambda(x,y) \in K[[t]]$. 
\end{proof}

\subsection{The motivic Poincar\'e series}\label{mps}
In this subsection we assume $k= \CC$. Consider the generating series 
\[P_{gel}(T) = \sum_{n \geq 0} \int_{\Sym_n \CX_f^0} |\omega^{gel}_n| T^n   \in \calc_\CC[[T]] .\]
We will show that $P_{gel}(T)$ equals the motivic Poincar\'e series of $f$ introduced in \cite{CDGZ04}.

The motivic Poincar\'e series of \cite{CDGZ04} is defined as follows when $m=2$. If the germ $(C,0)$ defined by $f$ contains $r$ branches, we can define a multi--index filtration on $k[[x,y]]$ by setting $$J_{\underline{v}}=
\{g\in k[[x,y]]|v_i(g)\geq v_i\}$$ where $\underline{v}=(v_1,\ldots,v_r)\in \BZ^r$  and $v_i(g)$ is the order of the pullback of $f$ to the normalization of the $i$-th branch of $C$. Let $h(\underline{v})=\codim_{k[[x,y]]}J_{\underline{v}}$ and write
$$L_f(t_1,\ldots,t_r,\BL^{-1})=\sum_{\underline{v}\in \BZ^r}\frac{\BL^{-h(\underline{v})}-\BL^{-h(\underline{v}+\underline{1})}}{1-\BL^{-1}}t^{\underline{v}}$$ By \cite{CDGZ04} one can define the motivic Poincar\'e series of $f$ as $$P_f(t_1,\ldots,t_r;\BL)=\frac{L_f(t_1,\ldots,t_r,\BL^{-1})\prod_{i=1}^r(t_i-1)}{t_1\cdots t_r-1}$$ In particular, when $r=1$ and the curve is unibranch, we have $L_f(t,\BL^{-1})=P_f(t,\BL^{-1})$. 

Our interest in the motivic Poincar\'e series is encapsulated in
\begin{proposition}
\label{prop:motivicequalsgel}
 $$P_f(t_1=\dots=t_r=T,\BL^{-1})=\sum_{n=1}^\infty \BL^{-n} [\Hilb_n^1(f)_0] T^n=P_{gel}(T).$$
\end{proposition}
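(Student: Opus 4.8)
The second equality $\sum_{n}\BL^{-n}[\Hilb_n^1(f)_0]T^n = P_{gel}(T)$ is immediate from Theorem~\ref{gelhil}, so the content is the first equality, identifying the motivic Poincar\'e series $P_f(t=T,\BL^{-1})$ with $\sum_{n\ge 1}\BL^{-n}[\Hilb_n^1(f)_0]T^n$. The plan is to go through the explicit stratification of $\Hilb_n^1(f)_0$ by the value group of the semigroup filtration. In the unibranch case, a principal ideal $I=(g)\subset \CO_{C,0}$ supported at the origin is determined, up to the action of units, by the order $v(g)=v$ of $g$ along the normalization; the locus of such ideals of colength $n$ is nonempty exactly when $n=h(v)$ in the notation of Section~\ref{mps}, i.e. when $v$ lies in the semigroup-theoretic range where $\dim_k(\CO_{C,0}/(g))=h(v)$. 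So first I would show that $\Hilb_n^1(f)_0$ is isomorphic to the space of principal ideals of colength $n$, and decompose it according to $v$: $[\Hilb_n^1(f)_0]=\sum_{v:\,h(v)=n}[\{(g)\ :\ v(g)=v\}]$.

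Next I would compute the motivic class of each stratum $\{(g): v(g)=v\}$. A generator $g$ with $v(g)=v$ can be written, after choosing a uniformizer on the normalization, as a power series with leading term of order $v$; two such generators give the same ideal iff they differ by a unit of $\CO_{C,0}$. This makes the stratum a quotient of an affine space by a free action of the unit group, and a standard computation (the same one underlying the Campillo--Delgado--Gusein-Zade formula) gives its class as $(\BL^{h(v+1)-h(v)}-1)/(1-\BL^{-1})\cdot \BL^{-(\text{shift})}$ after the normalization by $\BL^{-n}$; more precisely one recognizes the coefficient of $t^v$ in $L_f(t,\BL^{-1})=\sum_v \frac{\BL^{-h(v)}-\BL^{-h(v+1)}}{1-\BL^{-1}}t^v$, up to the factor $\BL^{-n}=\BL^{-h(v)}$. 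I would then assemble the pieces: since $r=1$, the prefactor $\prod_i(t_i-1)/(t_1\cdots t_r-1)$ in the definition of $P_f$ is trivial, so $P_f(t=T,\BL^{-1})=L_f(T,\BL^{-1})$, and matching coefficients of $T^n=T^{h(v)}$ with the stratum computation above finishes the identification.

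The main obstacle is the bookkeeping in the stratum computation: one must carefully identify the quotient of the space of generators of order $v$ by the unit group, keep track of which powers of $\BL$ appear, and confirm that the $\BL^{-n}$ normalization is exactly what converts the Hilbert-scheme count into the CDGZ coefficient. A clean way to bypass re-deriving this is to invoke the known geometric description of $\Hilb^1_n(f)_0$ in terms of the semigroup (e.g. via the Abel--Jacobi-type map to $\Sym$ of the normalization, or directly via \cite{CDGZ04} where the motivic Poincar\'e series is literally built from such strata), so that the equality $P_f(t=T,\BL^{-1})=\sum_n \BL^{-n}[\Hilb_n^1(f)_0]T^n$ becomes a matter of comparing two descriptions of the same stratification. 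I would structure the proof so that, if one accepts the stratification and the elementary class computation, the conclusion is a one-line coefficient comparison using that the CDGZ prefactor is trivial for $r=1$.
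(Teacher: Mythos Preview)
Your overall strategy matches the paper's: identify $\Hilb_n^1(f)_0$ with principal ideals modulo units, compute the motivic class of each piece, and match with the coefficients of $L_f=P_f$. However, there is a concrete indexing error that would derail the computation as you have written it.

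You assert that the colength of $(g)$ equals $h(v)$ and therefore want to match $T^n$ with $T^{h(v)}$. This is wrong: for a unibranch curve one has $\dim_k \CO_C/(g)=v(g)$, not $h(v(g))$ (this is \cite[Proposition 6]{OblomkovShende12}, and is the fact the paper uses). Hence the variable $n$ on the Hilbert scheme side is literally the valuation $v$, and your ``stratification of $\Hilb_n^1(f)_0$ by $v$'' is trivial: for each $n$ there is exactly one relevant $v$, namely $v=n$. The sum $\sum_{v:h(v)=n}$ you propose would group the wrong terms together and the powers of $\BL$ would not match.

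With the correct identification $n=v$, what remains is to show that when $v\in\Gamma$ the space $\Hilb_v^1(f)_0\cong (J_v\setminus J_{v+1})/\CO_C^\times$ is an affine space of dimension $v+1-h(v+1)$, so that $\BL^{-v}[\Hilb_v^1(f)_0]=\BL^{1-h(v+1)}$, which is exactly the coefficient $\pi_v(\BL^{-1})$ of $t^v$ in $P_f$ (the unibranch case has $\dim J_v/J_{v+1}\le 1$, so $[\mathbb{P}\mathcal{H}'(v)]$ is either a point or empty). The paper does this by identifying the quotient with $(g+J_{v+1})/(1+\frm_C)$ for any fixed $g$ of valuation $v$ and counting the surviving coefficients via a bijection $(v+H)\cap\Gamma\cong\{h\in H:h<v\}$, which has cardinality $v+1-h(v+1)$. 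Your sketch of ``affine space modulo a free action of the unit group'' is the right idea but you should make this step explicit, since the raw quotient $(J_v\setminus J_{v+1})/\CO_C^\times$ is not obviously an affine space without this normal-form argument.
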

\begin{proof}
As explained in \cite[Section 3]{GorskyNemethi}, the coefficient of $t^v=t_1^{v_1}\cdots t_r^{v_r}$ in $P_f(t_1,\ldots,t_r,\BL^{-1})$ is given by \begin{equation}\label{eq:coeffs}\pi_v(\BL^{-1})=\BL^{1-h(v+\underline{1})}[\mathbb{P}\mathcal{H}'(v)],\end{equation} where $[\mathbb{P}\mathcal{H}'(v)]$ is the motive of the projectivization of the hyperplane arrangement $\mathcal{H}'(v)$ defined by
$$\mathcal{H}'(v):=\left.\frac{J_v}{J_{v+\underline{1}}}\middle\backslash\bigcup_i \frac{J_{v+e_i}}{J_{v+\underline{1}}}\right.$$ where $e_i$ is the $i$-th basis vector.

Next, note that any nonzero function not proportional to a factor of $f$ gives a finite codimension ideal in $\CO_C=k[[x,y]]/f$. Note that these are exactly the functions with finite valuation on each component of $C$. We can call these regular elements and denote them $\CO_C^{reg}$. For any $g\in \CO_C^{reg}$ one has 
\begin{equation}
\label{eq:codim}
\dim \CO_C/(g)=\sum_{i=1}^r v_i(g),
\end{equation} 
which follows from the additivity of intersection numbers of plane curves or from the following direct argument: 
Let \(\CO_{C_i}\) be the local rings of the branches and the projections \(\pi_i:\CO_C\to \CO_{C_i}\)
yield an embedding \(\pi:\CO_{C}\to \oplus_i\CO_{C_i}\). The cokernel of the embedding is of finite length and \(\pi(g)\) is non-zero divisor for \(g\in \CO_C^{reg}\) hence
we apply \cite[Proposition 6]{OblomkovShende12} for \(A=\CO_C\) and \(B=\CO_{C_i}\) and \(f=g\). Thus showed that \(\dim(\CO_C/(g))=\sum_i\dim(\CO_{C_i}/(\pi_i(g))).\)
Finally, \(\pi_i(g)\) is not  a zero-divisor hence
the equality \(\dim(\CO_{C_i}/(\pi_i(g)))=v_i(g)\) follows from applying  \cite[Proposition 6]{OblomkovShende12} to the inclusion
\(\CO_{C_i}\to k[[t]]\).

Now there is a map $\CO_C^{reg}\to \bigsqcup_n \Hilb_n^1(f)_0$ 
sending $g\mapsto I=(g)$ and the only ambiguity in defining this principal ideal is 
multiplication by invertible functions in $\CO_C$, which gives a constructible
bijection $(J_v\backslash \bigcup_i J_{v+e_i})/\CO_C^{*}\xrightarrow{\sim} \Hilb^1_v(f)_0$, where  $\Hilb^1_v(f)_0$ is the subset of $\Hilb_n^1(f)_0$ where the 
valuations on the components correspond to $v$.

On the other hand there is a fibration
\[  (J_v\backslash \bigcup_i J_{v+e_i})/ \CO_C^{*} \to  \mathcal{H}'(v) \]
whose fiber over $[g] \in \mathcal{H}'(v)$ can be identified with $g+J_{v+\underline{1}}/1 + \mathfrak{m}_C$, where  $ \mathfrak{m}_C \subset \CO_C$ denotes the maximal ideal. 


 Let us show that the quotient $g+J_{v+\underline{1}}/1 + \mathfrak{m}_C$ has the class \(\mathbb{L}^{d(v)}\)  where \(d(v)\)  is the dimension of \(J_{v+\underline{1}}/g\mathfrak{m}_C\). Indeed, the linear
space \(\mathfrak{m}_CJ_{v+\underline{1}}\) acts by shifts on the affine space \(g+J_{v+\underline{1}}\) and for any $k\geq 0$ there is a well-defined projection map:
\[\pi_k: \left((g+J_{v+\underline{1}})/\mathfrak{m}^{k+1}_CJ_{v+\underline{1}})\right)/(1+\mathfrak{m}_C)\to  \left((g+J_{v+\underline{1}}/\mathfrak{m}^k_CJ_{v+\underline{1}}\right)/(1+\mathfrak{m}_C).\]

The fibers of the map \(\pi_k\)  are  isomorphic to the affine space
\[V_k=(\mathfrak{m}^k_CJ_{v+\underline{1}}/\mathfrak{m}^{k+1}_CJ_{v+\underline{1}})/(g\mathfrak{m}_C\cap \mathfrak{m}_C^kJ_{v+\underline{1}})\] by the following argument.
Let us  pick \(\bar{h}\in J_{v+\underline{1}}/\mathfrak{m}_C^kJ_{v+\underline{1}}\) and \(h,h' \in J_{v+\underline{1}}/\mathfrak{m}_C^{k+1}J_{v+\underline{1}}\ \) are two lifts of
\(\bar{h}\). Then \(g+h\) and \(g+h'\) are in the same \(1+\mathfrak{m}_C\)-orbit if and only \(h-h'=g m\), \(m\in \mathfrak{m}_C\), and the statement follows.
To complete the argument we observe that \(V_k\) is a point for large \(k\) and \(d(v)=\sum_{k\ge 0}\dim(V_k)\).

To finish the proof we observe that $d(v) = \dim J_{v+\underline{1}}/g\mathfrak{m}_C  = |v|+1 -h(v+\underline{1})$, which follows from the short exact sequence
\[ 0 \to J_{v+\underline{1}}/g\mathfrak{m}_C \to \CO_C/g\mathfrak{m}_C \to \CO_C/J_{v+\underline{1}} \to  0,  \]
since $ \dim \CO_C/J_{v+\underline{1}} = h(v+\underline{1})$ by definition and and $\dim \CO_C/g\mathfrak{m}_C  = |v|+1$ by \eqref{eq:codim}.

\end{proof}
\subsection{The Euler characteristic specialization}
\label{sec:specialization}
In this subsection we assume $k= \CC$. Consider the generating series 
\[P_{gel}(T) = \sum_{n \geq 0} \int_{\Sym_n \CX_f^0} |\omega^{gel}_n| T^n   \in \calc_\CC[[T]] .\]

By Theorem \ref{gelhil} we have $P_{gel}(T) = \sum_{n \geq 0}  \BL^{-n} [\Hilb_n^1(f)_0] T^n$ and thus $P_{gel}(T)$ lies in the image of $\CM_\CC[[T]] \to \calc_\CC[[T]]$. In particular, we may consider its Euler-characteristic specialization
\[ P_{gel,\chi}(T) = \sum_{n\geq 0} \chi(\Hilb_n^1(f)_0)T^n. \]
If $L$ denotes the link defined by the plane curve singularity $(f,0)$ and $A_L(T)  \in 1 +T\BZ[T]$ the normalized Alexander polynomial, it follows from \cite[Section 3]{OblomkovShende12} that 
\[ P_{gel,\chi}(T) = \frac{A_L(T)}{1-T}.\] 

On the other hand, since $|\omega_n^{gel}|$ is non-vanishing and the generic fiber of $\CX_f^0$ is smooth, the Euler-characteristic specialization of $\int_{\Sym_n \CX_f^0}|\omega_n^{gel}|$ equals the Euler-characteristic specialization of the motivic Serre invariant of $\Sym_n \CX_f^0$ \cite[Section 6.1]{NS07}. Using this, we deduce from the trace formula \cite[Theorem 5.4]{NS07} 
\[ \chi\left( \int_{\Sym_n \CX_f^0}|\omega_n^{gel}|\right) = Tr(\phi, H_B^*(\Sym_n(\CX_f^0),\BQ_\ell)) = \sum_{i \geq 0}(-1)^i Tr(\phi, H_B^i(\Sym_n(\CX_f^0),\BQ_\ell),  \]
where $H_B^*(\Sym_n(\CX_f^0),\BQ_\ell)$ denotes the Berkovich étale cohomology of the rigid analytic space associated to $\Sym_n(\CX_f^0)$ and $\phi$ is the topological generator of $\Gal(\CC((t))) \cong \hat{\mu}$ given by $t^{1/n} \mapsto \exp(2\pi i /n)t^{1/n}$. 

\begin{lemma}\label{nsym} We have 
\[H_B^*(\Sym_n(\CX_f^0),\BQ_\ell)) \cong  (H_B^*(\CX_f^0,\BQ_\ell)^{\otimes n})^{S_n},  \]
as graded $\hat{\mu}$-modules.
\end{lemma}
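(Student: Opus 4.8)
The plan is to reduce the claim, via the Nicaise--Sebag comparison between the Berkovich étale cohomology of an analytic Milnor fibre and the singular cohomology of the corresponding topological Milnor fibre \cite{NS07}, to the classical fact that rational cohomology turns symmetric products into $S_n$-invariants of tensor powers. First I would identify the relevant Milnor fibre: over $\Spec(k[[t]])$ the fibre of $\Sym_n\CX_f$ above $t=\epsilon$ is $\Sym_n(f^{-1}(\epsilon))$, and in a neighbourhood of the point $n[0]$ of the special fibre only effective divisors supported near $0\in C$ occur, so a Milnor representative of $\Sym_n\CX_f$ at $n[0]$ is $\Sym_n$ of a Milnor representative of $\CX_f$ at $0$; hence the Milnor fibre at $n[0]$ is $\Sym_n F_0$ with geometric monodromy $\Sym_n$ of that of $F_0$. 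Since symmetric powers of smooth curves are smooth, the generic fibre of $\Sym_n\CX_f$ is smooth and $\Sym_n\CX_f^0$ is a smooth rigid space, so \cite{NS07} provides $\hat\mu$-equivariant graded isomorphisms $H_B^*(\Sym_n\CX_f^0,\BQ_\ell)\cong H^*(\Sym_n F_0,\BQ_\ell)$ and $H_B^*(\CX_f^0,\BQ_\ell)\cong H^*(F_0,\BQ_\ell)$, where $\hat\mu$ acts through the appropriate power of monodromy.

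Next I would invoke the classical computation of the cohomology of a symmetric product (Macdonald): Künneth together with the transfer for the finite quotient $F_0^{\times n}\to\Sym_n F_0$ — valid since $n!$ is invertible in $\BQ_\ell$ — yields a natural graded isomorphism
\[ H^*(\Sym_n F_0,\BQ_\ell)\;\cong\;\bigl(H^*(F_0,\BQ_\ell)^{\otimes n}\bigr)^{S_n}, \]
with $S_n$ permuting the tensor factors with the usual Koszul sign convention. Being natural in $F_0$, this isomorphism intertwines the operator induced by monodromy on $H^*(\Sym_n F_0,\BQ_\ell)$ with the operator $M^{\otimes n}$ on $H^*(F_0,\BQ_\ell)^{\otimes n}$, where $M$ is the monodromy on $H^*(F_0,\BQ_\ell)$. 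Feeding in the two comparison isomorphisms of the previous paragraph — the second one $n$ times, to rewrite $H^*(F_0,\BQ_\ell)^{\otimes n}$ with operator $M^{\otimes n}$ as $H_B^*(\CX_f^0,\BQ_\ell)^{\otimes n}$ with its tensor $\hat\mu$-action — gives the asserted isomorphism of graded $\hat\mu$-modules.

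The step I expect to require the most care is the first: verifying precisely that taking Milnor fibres commutes with $\Sym_n$ — equivalently, that near $n[0]$ the family $\Sym_n\CX_f\to\Spec(k[[t]])$ is the $S_n$-quotient of the $n$-fold fibre product of $\CX_f\to\Spec(k[[t]])$ — and that the Nicaise--Sebag comparison really applies to the generically smooth but possibly singular total space $\Sym_n\CX_f$, all compatibly with the $\hat\mu$-actions. A variant staying entirely within non-archimedean geometry is to observe that, as a rigid space, $\Sym_n\CX_f^0$ is the $S_n$-quotient of the $n$-fold fibre product of $\CX_f^0$ over $k((t))$ — using that the Berkovich generic fibre commutes with fibre products and with admissible quotients by finite groups — and then to run the Künneth formula and the transfer directly in Berkovich étale cohomology, where the crux is the same descent-and-finiteness bookkeeping.
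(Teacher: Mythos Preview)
Your argument is correct but follows a genuinely different route from the paper's. You pass through the Nicaise--Sebag comparison with the \emph{topological} Milnor fibre, identify the Milnor fibre of $\Sym_n\CX_f$ at $n[0]$ with $\Sym_n F_0$, and then invoke Macdonald's formula in singular cohomology together with naturality for the monodromy. The paper instead stays on the algebraic side: it uses Berkovich's comparison $H_B^*\cong H^*(R\psi_\eta(\BQ_\ell)_{|\text{pt}})$ with stalks of the nearby cycles complex, and then proves the sheaf-level statement
\[
R\psi_\eta(\BQ_{\ell,\Sym_n\CX_f})\;\cong\;(\pi_*\boxtimes^n R\psi_\eta(\BQ_{\ell,\CX_f}))^{S_n}
\]
by checking that $R\psi_\eta$ commutes with $S_n$-invariants (via the idempotent), with proper pushforward, and with exterior products, together with $\BQ_{\ell,\Sym_n\CX_f}\cong(\pi_*\boxtimes^n\BQ_{\ell,\CX_f})^{S_n}$. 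Your approach is more elementary once the topological comparison is granted, and makes the monodromy compatibility transparent via naturality; the paper's approach avoids the topological Milnor fibre entirely and packages the K\"unneth-plus-transfer step into standard commutation properties of nearby cycles, which is cleaner if one wants to stay in the $\ell$-adic world. Your non-archimedean variant (K\"unneth and transfer directly in Berkovich \'etale cohomology) is closer in spirit to the paper's argument, though the paper routes everything through nearby cycles on the special fibre rather than working on the rigid generic fibre.
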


\begin{proof} Instead of working directly with Berkovich étale cohomology we use the comparison with $\ell$-adic nearby cycles \cite[Corollary 3.5]{Be96} implying 
\begin{align*}H_B^*(\Sym_n(\CX_f^0),\BQ_{\ell,\Sym_n(\CX_f)}) &\cong H^*(R\psi_\eta(\BQ_{\ell,\Sym_n(\CX_f)}) )_{|n[0]}), \\
H_B^*(\CX_f^0,\BQ_{\ell,\CX_f}) &\cong H^*(R\psi_\eta(\BQ_{\ell,\CX_f})_{|0}). \end{align*}
Here we wrote $\BQ_{\ell,\Sym_n(\CX_f)}$ and $\BQ_{\ell,\CX_f}$ for the constant sheaves on (the generic fibers of) $\Sym_n(\CX_f)$ and $\CX_f$. Therefor it suffices to show that we have a quasi-isomorphism
\[R\psi_\eta(\BQ_{\ell,\Sym_n(\CX_f)}) \cong (\pi_*\boxtimes^n R\psi_\eta(\BQ_{\ell,\CX_f}))^{S_n}   \]
 in $D^b_c(\Sym_n(\CX_f)_{|k})$, where $\boxtimes$ denotes the exterior product on $\CX_f^n$. Notice that taking $S_n$-invariants can be defined via a projector as for example in \cite{MS12} and thus $R\psi_\eta$ commutes with taking $S_n$-invariants. Furthermore $R\psi_\eta$ commutes with proper pushforward \cite[XIII 2.1.7]{DK06} and exterior products \cite[Lemma 5.1.1]{BB93}. The lemma thus follows from the identity $\BQ_{\ell,\Sym_n(\CX_f)} \cong (\pi_*\boxtimes^n\BQ_{\ell,\CX_f})^{S_n}$.
\end{proof}

Combining Lemma \ref{nsym} with Lemma \ref{trexp} below we deduce
\[ P_{gel,\chi}(T) = \exp\left(\sum_{n\geq 1} \frac{Tr(\phi^n,H^*(\CX_f^0,\BQ_\ell))}{n} T^n\right).  \]

Finally choosing an embedding $\BQ_\ell \subset \CC$, we deduce from \cite[Theorem 9.2]{NS07} the equality
\begin{equation}
\label{eq:monodromyzetafn}
\exp\left(\sum_{n\geq 1} \frac{Tr(\phi^n,H^*(\CX_f^0,\BQ_\ell))}{n} T^n\right) = \zeta_{f,0}(T),
\end{equation}
where $\zeta_{f,0}(T)$ denotes the monodromy zeta function of the Milnor fiber of $f$ at $0$. Putting all of this together we find

\begin{corollary}\label{miln} For any reduced plance curve singularity $(f,0)$ with link $L$ we have 
\[ \zeta_{f,0}(T) = \frac{A_L(T)}{1-T}.\]
\end{corollary}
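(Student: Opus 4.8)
The plan is to assemble the corollary from the pieces already developed, treating it as a bookkeeping exercise that combines the Euler characteristic specialization of $P_{gel}(T)$ computed in two different ways. First I would observe that by Theorem \ref{gelhil} the series $P_{gel}(T)$ lies in the image of $\CM_\CC[[T]]$, so the Euler characteristic specialization $P_{gel,\chi}(T) = \sum_n \chi(\Hilb_n^1(f)_0) T^n$ makes sense, and by the known geometry of Hilbert schemes on plane curves via \cite[Section 3]{OblomkovShende12} this equals $A_L(T)/(1-T)$. This gives one side of the identity essentially for free.

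Second, I would compute $P_{gel,\chi}(T)$ a different way, via motivic Serre invariants. Since $|\omega_n^{gel}|$ is generically non-vanishing and $\CX_f^0$ has smooth generic fiber, the Euler characteristic of $\int_{\Sym_n\CX_f^0}|\omega_n^{gel}|$ equals that of the motivic Serre invariant of $\Sym_n\CX_f^0$, so by the trace formula \cite[Theorem 5.4]{NS07} it equals $\Tr(\phi, H_B^*(\Sym_n(\CX_f^0),\BQ_\ell))$. Then Lemma \ref{nsym} identifies these Berkovich cohomology groups with the $S_n$-invariants of $H_B^*(\CX_f^0,\BQ_\ell)^{\otimes n}$ as graded $\hat\mu$-modules, and the elementary plethystic trace identity (Lemma \ref{trexp}, invoked in the excerpt) converts $\sum_n \Tr(\phi, (H^{\otimes n})^{S_n}) T^n$ into $\exp(\sum_{n\geq 1} \frac{1}{n}\Tr(\phi^n, H^*(\CX_f^0,\BQ_\ell)) T^n)$. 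Finally \cite[Theorem 9.2]{NS07}, after choosing an embedding $\BQ_\ell\subset\CC$, identifies this exponential with the monodromy zeta function $\zeta_{f,0}(T)$.

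Putting the two computations together gives $\zeta_{f,0}(T) = A_L(T)/(1-T)$, which is exactly the claim. The main obstacle — and the place where real work is hidden — is Lemma \ref{nsym}, namely the compatibility of nearby cycles with the symmetric-power construction; its proof requires knowing that $R\psi_\eta$ commutes with proper pushforward, with exterior products, and with taking $S_n$-invariants (via a projector), together with the identification $\BQ_{\ell,\Sym_n(\CX_f)}\cong(\pi_*\boxtimes^n\BQ_{\ell,\CX_f})^{S_n}$. Since that lemma is already stated and proved in the excerpt, the corollary itself reduces to citing it along with the two trace-formula inputs from \cite{NS07} and the Hilbert-scheme computation from \cite{OblomkovShende12}; the only subtlety at this last stage is making sure the Euler characteristic specialization is well-behaved, which is guaranteed because $|\omega_n^{gel}|$ is non-vanishing and the generic fiber is smooth, so that the Serre-invariant comparison of \cite[Section 6.1]{NS07} applies verbatim in the Cluckers--Loeser setting.
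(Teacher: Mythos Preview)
Your proposal is correct and follows exactly the same route as the paper: compute $P_{gel,\chi}(T)$ once via Theorem \ref{gelhil} and \cite{OblomkovShende12} as $A_L(T)/(1-T)$, and once via the motivic Serre invariant, the trace formula of \cite{NS07}, Lemma \ref{nsym}, Lemma \ref{trexp}, and \cite[Theorem 9.2]{NS07} as $\zeta_{f,0}(T)$. Even the side remarks about where the real work lies (Lemma \ref{nsym}) and why the Serre-invariant comparison applies match the paper's presentation.
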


Corollary \ref{miln} is originally due to Milnor \cite[Lemma 10.1]{Mi16}.  

\begin{remark}In \cite{gusein2002integrals} the authors consider the series $ P_{gel,\chi}(T)$ as the Poincaré series of the ring of functions on $(f,0)$. Our proof of Corollary \ref{miln} can be seen as an answer to their question, whether there is a direct link between the Poincaré series and the monodromy zeta function. \cite[Section 1]{gusein2002integrals}.
\end{remark}

\begin{lemma}\label{trexp} Let $k$ be a field of characteristic $0$, $V^*$ a finite dimensional $\BZ$-graded $k$-vector space and $\phi^*$ an endomorphism of $V^*$ preserving the grading. Then

\[\exp\left(\sum_{n\geq 1} \frac{Tr((\phi^*)^n,V^*)}{n} T^n\right) = \sum_{n\geq 0} Tr((\phi^*)^{\otimes n}, ((V^*)^{\otimes n})^{S_n})T^n = \prod_{i\in \BZ} \det(1-\phi^i T)^{(-1)^{i+1}},\]
where for a graded endomorphism $\psi^*$ of a finite dimensional graded $k$-vector space $W^*$ we wrote $Tr(\psi^*, W^*) = \sum_{i \in \BZ} (-1)^i Tr(\psi^i,W^i)$. 

\end{lemma}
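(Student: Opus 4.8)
\textbf{Proof strategy for Lemma \ref{trexp}.} The plan is to prove the two equalities separately, reducing everything to the one-dimensional ungraded case by multiplicativity. First I would establish the identity
\[ \prod_{i\in \BZ} \det(1-\phi^i T)^{(-1)^{i+1}} = \exp\left(\sum_{n\geq 1} \frac{Tr((\phi^*)^n,V^*)}{n} T^n\right). \]
This is the classical ``$\log\det = \mathrm{tr}\log$'' computation: for a single operator $\psi$ on a finite-dimensional space $W$ one has $\det(1-\psi T)^{-1} = \exp\left(\sum_{n\geq 1} \frac{Tr(\psi^n,W)}{n}T^n\right)$, which follows by diagonalizing (or upper-triangularizing) $\psi$ over $\bar k$ and using $\log(1-\alpha T)^{-1} = \sum_n \alpha^n T^n/n$. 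Applying this to each graded piece $\psi = \phi^i$ on $W = V^i$, raising to the power $(-1)^{i+1}$, and taking the product over $i$ turns the sum in the exponent into $\sum_n \frac{1}{n}\left(\sum_i (-1)^{i+1} Tr((\phi^i)^n, V^i)\right) T^n = \sum_n \frac{1}{n}\left(-Tr((\phi^*)^n,V^*)\right)\cdot(-1)$; tracking the sign via the definition $Tr(\psi^*,W^*) = \sum_i (-1)^i Tr(\psi^i,W^i)$ gives exactly the claimed exponent.

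Next I would prove
\[ \sum_{n\geq 0} Tr\bigl((\phi^*)^{\otimes n}, ((V^*)^{\otimes n})^{S_n}\bigr) T^n = \prod_{i\in\BZ} \det(1-\phi^i T)^{(-1)^{i+1}}. \]
Here the key input is the graded-symmetric decomposition $((V^*)^{\otimes n})^{S_n} \cong \bigoplus_{a+b=n} \Sym^a(V^{even})\otimes \Lambda^b(V^{odd})$ as graded $k$-spaces compatibly with the induced endomorphism, where $V^{even} = \bigoplus_{i\ \mathrm{even}} V^i$ and $V^{odd}=\bigoplus_{i\ \mathrm{odd}} V^i$ (the sign in the $S_n$-action is what converts the symmetric group invariants on odd pieces into exterior powers). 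Summing over $n$ and using the standard generating-function identities $\sum_{a\geq 0} Tr(\psi, \Sym^a W) T^a = \det(1-\psi T)^{-1}$ and $\sum_{b\geq 0} Tr(\psi,\Lambda^b W) T^b = \det(1-\psi T)$, applied to $\psi = \phi^{even}$ on $W = V^{even}$ and $\psi=\phi^{odd}$ on $W = V^{odd}$ respectively, yields $\det(1-\phi^{even}T)^{-1}\det(1-\phi^{odd}T)$. Finally $\det(1-\phi^{even}T)^{-1} = \prod_{i\ \mathrm{even}}\det(1-\phi^i T)^{-1}$ and $\det(1-\phi^{odd}T) = \prod_{i\ \mathrm{odd}}\det(1-\phi^i T)$, which combine into $\prod_{i\in\BZ}\det(1-\phi^iT)^{(-1)^{i+1}}$, matching the right-hand side.

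Combining the two displays gives all three quantities equal, proving the lemma. The only genuinely delicate point is the bookkeeping of signs: one must be careful that the Koszul sign rule in the $S_n$-action on $(V^*)^{\otimes n}$ really does send the invariants on an odd-degree summand to its exterior algebra rather than its symmetric algebra, and that the degree of $\Lambda^b(V^{odd})$ is counted with the correct parity so that $Tr(\cdot, ((V^*)^{\otimes n})^{S_n})$ comes out as $\sum_{a+b=n}(-1)^{?}$ with the signs arranged to produce precisely $\det(1-\phi^{odd}T)$ on the nose. I would handle this by first checking the case $\dim V^* = 1$ concentrated in a single degree (where both sides are elementary), then invoking the multiplicativity of all three expressions under direct sums of graded vector spaces to reduce the general case to this one. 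I expect the main obstacle to be purely notational rather than conceptual; no deep input is needed beyond the classical determinant identities and the decomposition of tensor-power invariants.
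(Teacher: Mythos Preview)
Your approach is correct and close in spirit to the paper's, though packaged differently. The paper works entirely in terms of symmetric functions: in the ungraded case it invokes the identity $\exp\bigl(\sum_n p_n(\lambda)/n\,T^n\bigr) = \sum_n h_n(\lambda)T^n = \det(1-\phi T)^{-1}$ for diagonalizable $\phi$ (extending by Zariski density), and then handles the odd-degree pieces via the eigenvalue-negation identities $p_n(-\lambda)=(-1)^n p_n(\lambda)$, $h_n(-\lambda)=(-1)^n e_n(\lambda)$, $\sum_n e_n(\lambda)T^n = \prod(1+\lambda_iT)$. Your route instead makes the structural decomposition $((V^*)^{\otimes n})^{S_n}\cong \bigoplus_{a+b=n}\Sym^a(V^{even})\otimes\Lambda^b(V^{odd})$ explicit and reads off the determinant generating functions directly, which is arguably cleaner: it avoids the symmetric-function bookkeeping and the density argument, and it makes transparent why exterior powers (hence $\det(1-\phi^{odd}T)$ in the numerator) appear. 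The paper's argument is shorter to state but leaves more to the reader; yours is more self-contained. Both rely on the same underlying $\Sym$/$\Lambda$ duality, so there is no substantive mathematical difference.

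One small remark on your sign check: the reduction to the one-dimensional case via multiplicativity under direct sums is fine, but you should note that the multiplicativity of the middle generating series uses precisely the graded-symmetric-algebra identity $\Sym(A^*\oplus B^*)\cong \Sym(A^*)\otimes\Sym(B^*)$, which is where the Koszul sign rule enters; once that is granted the one-dimensional odd case is the only genuine computation, and there $((V^*)^{\otimes n})^{S_n}$ vanishes for $n\geq 2$, matching $1-\lambda T$ on the nose.
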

\begin{proof} 
When there is no grading and $\phi$ is diagonalizable with eigenvalues $\lambda_1,\ldots, \lambda_m$, we have
$$\exp\left(\sum_{n\geq 1}\frac{p_n(\lambda_1,\ldots, \lambda_m)}{n}T^n\right)=\sum_{n\geq 0} h_n(\lambda_1,\ldots,\lambda_m) T^n=\prod_{i=1}^m\frac{1}{1-\lambda_iT}=\frac{1}{\det(1-\phi T)}$$
The general ungraded case can be deduced from the Zariski density of diagonalizable $\varphi$. 

To upgrade to the graded case, we simply note the well--known identities $p_n(-\lambda_1,\ldots,-\lambda_m)=(-1)^n p_n(\lambda_1,\ldots,\lambda_m)$, $h_n(-\lambda_1,\ldots,-\lambda_m)=(-1)^ne_n(\lambda_1,\ldots,\lambda_m)$ and $\sum_{n\geq 0} e_n(\lambda_1,\ldots,\lambda_m) T^n=\prod_{i=1}^m(1+\lambda_i T)$ and apply these for the odd graded pieces.

\end{proof}

\section{The orbifold measure on symmetric powers}\label{orsec}


Let $\CX / k[[t]]$ be separated, flat and of finite type with smooth $d$-dimensional generic fiber. Furthermore let $0 \in \CX_k$ be a point in the special fiber of $\CX$ and define $\Sym_n(\CX^0)$ as the definable subassignment whose $K$ points are given by $\{ x \in \Sym_n(\CX)(K[[t]]) \ |\ x_K = n[0]   \}$ as before. 

 Let $p_i:\CX^n \to \CX$ denote the projection onto the $i$-th coordinate. Given an algebraic $d$-form $\omega$ on $\CX_{k((t))}$, consider the $dn$-form $\omega_n = p_1^*\omega \wedge \dots \wedge p_n^*\omega$ on $\CX_{k((t))}^n$. For any $\sigma \in S_n$ we have $\sigma^*\omega_n = \mathrm{sgn}(\sigma)^d \omega_n$. Thus $\omega_n$ does not descend to a form on $\Sym_n\CX_{k((t))}$ unless $d$ is even.

Instead of $\omega_n$ we thus consider its tensor-square $\omega_n^{\otimes 2} \in H^0\left(\CX_{k((t))}^n,( \Omega_{\CX_{k((t))}^n/k((t))}^{dn})^{\otimes 2} \right)$. This form now descends to the \textit{orbifold form} $\omega_{orb}$ on $\Sym_n\CX_{k((t))}$ and for any choice of square root $\BL^{1/2}$ of $\BL$ we obtain, assuming integrability, a well-defined class
\[\int_{\Sym_n\CX^0} |\omega_{orb}|^{1/2} \in \calc_k[ \BL^{1/2}]. \]



\begin{proposition}\label{orbif} Assume that $|\omega|_n$ is integrable on $\CX^0_n$ for each $n \geq 1$. Then we have
\[\int_{\Sym_n \CX^0} |\omega_{orb}|^{1/2} = \sum_{\lambda = (1^{a_1},2^{a_2},\dots) \vdash n}\BL^{-v(\lambda)} \prod_{i\geq 1} \Sym_{a_i}\left( \int_{\CX^0_i} |\omega|_i   \right),\]
where  $v(\lambda) = \frac{d}{2} \sum_{i\geq 1} a_i (i-1)$.
\end{proposition}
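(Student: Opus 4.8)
The plan is to compute the integral by stratifying $\Sym_n\CX^0$ according to how the associated relative $0$-cycle splits into closed points over the generic fibre, which turns the left-hand side into an Euler-product-type expression matching the plethystic structure of the right-hand side. A $K[[t]]$-point of $\Sym_n\CX$ lying in $\Sym_n\CX^0$ is the same thing as an effective $0$-cycle of degree $n$ on $\CX_{K((t))}$ reducing to $n[0]$; it decomposes uniquely into closed points, each of which — since $K$ is algebraically closed of characteristic $0$ — has residue field $K((t^{1/i}))$ for some $i\geq 1$ and, by separatedness of $\CX/k[[t]]$, extends to a $K[[t^{1/i}]]$-point of $\CX$ reducing to $0$. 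Recording for each $i$ the number $a_i$ of degree-$i$ points occurring (with multiplicity) gives a partition $\lambda=(1^{a_1}2^{a_2}\cdots)\vdash n$, hence a partition into definable subassignments $\Sym_n\CX^0=\bigsqcup_{\lambda\vdash n}S_\lambda$. Writing $\calD_i$ for the definable subassignment of genuine degree-$i$ closed points reducing to $0$ — equivalently the quotient of $\CX^0_i$ by the Galois $\mu_i$-action, which is free on the genuine locus — one gets $S_\lambda\cong\prod_i\Sym_{a_i}(\calD_i)$.

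Next I would show that the orbifold form is not twisted under merging. For a decomposition $n=\sum_r i_r$ let $\mathrm{m}\colon\prod_r\Sym_{i_r}\CX\to\Sym_n\CX$ be the addition morphism. Since $\mathrm{m}\circ\prod_r\pi_{i_r}=\pi_n\colon\CX^n\to\Sym_n\CX$ (with $\pi_k\colon\CX^k\to\Sym_k\CX$ the quotient) and $\pi_n^*\omega_{orb}=\omega_n^{\otimes2}=\boxtimes_r\omega_{i_r}^{\otimes2}$, dominance of $\prod_r\pi_{i_r}$ forces $\mathrm{m}^*\omega_{orb}=\boxtimes_r\omega_{i_r,orb}$, the ramification of $\mathrm{m}$ along the cross-resultant locus being absorbed into the poles of the $\omega_{i_r,orb}$. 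Restricted to $S_\lambda$ this means $|\omega_{orb}|^{1/2}|_{S_\lambda}$ is the orbifold descent, along the permutation action of $\prod_iS_{a_i}$, of $\boxtimes_r|\omega_{i_r,orb}|^{1/2}$ on $\prod_r\calD_{i_r}$. Applying the Fubini theorem of \cite{CL08} over the index $i$, and the motivic orbifold (McKay-type) integration formula to the quotients $\calD_i^{a_i}\to\Sym_{a_i}\calD_i$ — the avatar of the power structure underlying $[\Sym_mX]=\Sym_m[X]$ — I obtain $\int_{S_\lambda}|\omega_{orb}|^{1/2}=\prod_i\Sym_{a_i}\big(\int_{\calD_i}|\omega_{i,orb}|^{1/2}\big)$.

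It then remains to evaluate the single-point integral $\int_{\calD_i}|\omega_{i,orb}|^{1/2}$, the contribution of one genuine degree-$i$ closed point inside $\Sym_i\CX^0$. Pulling back along the generically free quotient $\CX^0_i\to\calD_i$ and using $\pi_i$ as above, $|\omega_{i,orb}|^{1/2}$ becomes $|\Nm(\omega)|^{1/2}$, the norm of $\omega$ along $K((t^{1/i}))/K((t))$; carrying out the change of variables between the uniformisers $t$ and $t^{1/i}$ and integrating out the resulting discriminant factor $|\mathrm{disc}|^{-1/2}$ — whose generic order is the valuation $i-1$ of the different of $K((t^{1/i}))/K((t))$, appearing with weight $d$ from the $d$ coordinate directions and halved by the square root — gives
\[ \int_{\calD_i}|\omega_{i,orb}|^{1/2}=\BL^{-\frac{d}{2}(i-1)}\int_{\CX^0_i}|\omega|_i. \]
Substituting into the previous identity, using $\Sym_{a_i}(\BL^{-\frac{d}{2}(i-1)}a)=\BL^{-\frac{d}{2}a_i(i-1)}\Sym_{a_i}(a)$ and $\sum_i\frac{d}{2}a_i(i-1)=v(\lambda)$, and summing over $\lambda\vdash n$ yields the asserted formula; since only finitely many $\lambda$ and finitely many $i\leq n$ occur, integrability of the $|\omega|_i$ on $\CX^0_i$ also gives integrability of $|\omega_{orb}|^{1/2}$ on $\Sym_n\CX^0$. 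Equivalently, summing over $n$ turns the statement into $\sum_{n\geq0}\big(\int_{\Sym_n\CX^0}|\omega_{orb}|^{1/2}\big)T^n=\Exp\big(\sum_{i\geq1}\BL^{-\frac{d}{2}(i-1)}(\int_{\CX^0_i}|\omega|_i)T^i\big)$, from which Theorem \ref{inplesym} will follow via Lemma \ref{clin}.

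The main obstacle is the single-point computation together with making the formalism precise: pinning down the exponent $\tfrac{d}{2}(i-1)$ requires an explicit change-of-variables/mass-formula calculation across the ramified extension $k[[t^{1/i}]]/k[[t]]$, and the steps using the stratification need one to check that the ramification strata $S_\lambda$, the spaces $\calD_i$, the forms $\omega_{i,orb}$, and the orbifold integration formula are all available inside the Cluckers–Loeser framework — most safely by reducing, via a partition of unity and the Haiman-type charts of Section \ref{ha2} (together with Lemma \ref{lcih}), to the local model $\CX=\BA^d$ with $\omega$ the standard form, where $|\omega_{orb}|^{1/2}$ becomes an explicit discriminant integral $\int|\mathrm{disc}|^{-1/2}|de|$ over tuples of power series of positive valuation.
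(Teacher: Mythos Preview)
Your approach is genuinely different from the paper's. The paper does not stratify $\Sym_n\CX^0$ by degree pattern at all; instead it argues locally. If $0\in\CX$ is a smooth point one writes $\omega=\lambda\,dx_1\wedge\cdots\wedge dx_d$ in formal coordinates and reduces (after dividing both sides by $|\lambda|^n$) to $\CX=\BA^d_{k[[t]]}$ with the standard form; there the statement \emph{is} the Denef--Loeser orbifold formula for $S_n$ acting on $(\BA^d)^n$, their weight satisfying $w(\sigma)=v(\lambda)+d\sum_ia_i$ so that each summand matches $\BL^{-v(\lambda)}\prod_i\Sym_{a_i}(\BL^{-d})$. For singular $0$ one passes to a resolution $\CY\to\CX$ whose special fibre has multiplicities divisible enough that all $K[[t^{1/i}]]$-points for $i\le n$ factor through the smooth locus, reducing to the previous case. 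So the paper's proof is ``localize, then quote Denef--Loeser once''.

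Your decomposition is more conceptual --- it explains where the partition sum comes from, and identifying the exponent $\tfrac{d}{2}(i-1)$ with (half the) valuation of the different of $k((t^{1/i}))/k((t))$ is a nice observation. But there is a circularity risk in your McKay step: the claim $\int_{\Sym_{a_i}\calD_i}|\omega_{orb}|^{1/2}=\Sym_{a_i}\big(\int_{\calD_i}|\omega_{i,orb}|^{1/2}\big)$ is not a formal consequence of the power-structure identity $[\Sym_mX]=\Sym_m[X]$ --- it \emph{is} an orbifold formula for the permutation action of $S_{a_i}$, which is essentially what you are trying to prove. To break the loop you would still need a local computation, and that amounts to the paper's reduction to $\BA^d$ (as you yourself note at the end). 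So your route does not bypass the Denef--Loeser black-box; it trades one global invocation for several, plus extra bookkeeping (definability of the $S_\lambda$ and $\calD_i$, and compatibility of the several orbifold forms under merging). The paper's argument is shorter; yours, if made rigorous, would make the combinatorics behind the formula and the arithmetic origin of $v(\lambda)$ more transparent.
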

\begin{proof} Since everything depends only on a neighborhood of $0$ we may assume that $\CX$ is affine. Let us assume first that $0 \in \CX$ is a smooth point of $\CX/k[[t]]$. Then the completed local ring $\widehat{\CO}_{\CX,0}$ is isomorphic to $k[[t]][[x_1,\dots,x_d]]$. Since $\omega$ is generically non-vanishing we may write $\omega  = \lambda dx_1\wedge\dots \wedge dx_d$ in these local coordinates for some $\lambda \in k[[t]]$. Then $|\omega_{orb}|^{1/2} = |\lambda|^n |(dx_1\wedge\dots \wedge dx_d)_{orb}|^{1/2}$ and also $|\omega|_i = |\lambda|^i |dx_1\wedge\dots \wedge dx_d|$ on $\CX^0_i$. Thus after multiplying by $|\lambda|^{-n}$ on both sides we may assume that $\omega = dx_1\wedge\dots \wedge dx_d$. In this case the proposition reduces to the orbifold formula for $S_n$ acting on $(\BA^d)^n$ as it is proven for example in \cite[Theorem 3.6]{DenefLoeser02b}.
In the notation of \textit{loc. cit} we have
\[ \int_{\Sym_n (\BA^m)^0} |\omega_{orb}|^{1/2} = \sum_{[\sigma] \in \Conj(S_n)} \BL^{-w(\sigma)}, \]
where $\Conj(S_n)$ denotes conjugacy classes in $S_n$ and the weight $w(\sigma)$ is defined as follows: Let $r$ denote the order of $\sigma$ and $\xi$ a primitive $r$-th root of unity. Then there are unique integers $1 \leq e_{\sigma,i} \leq r$ for $1 \leq i \leq dn$ such that $\xi^{e_{\sigma,i}}$ are the eigenvalues for the action of $\sigma$ on $(\BA^d)^n$. If $\sigma$ has cycle-type given by the partition $\lambda =(1^{a_1},2^{a_2},\dots)$ then one gets the the formula
\[ w(\sigma) = \frac{d}{2}\sum_i a_i(i+1) = v(\lambda) + d\sum_i a_i.   \] 
Since in this case $\Sym_{a_i}\left( \int_{\CX^0_i} |\omega|_i   \right) = \Sym_{a_i}\BL^{-d} = \BL^{-da_i}$ the statement follows.

If $0\in \CX$ is not a smooth point, we may choose by \cite[Proposition 2.5]{NS07} a resolution of singularities, that is a regular, flat $k[[t]]$-variety $\CY$ with $\CY_k = \sum_{i} N_i E_i$ a strict normal crossing divisor, and a proper birational morphism $\pi\colon \CY \to \CX$ which is an isomorphism on generic fibers. Regularity implies that for any $K/k$, any $K[[t]]$-point of $\CY$ factors through the smooth locus of $\CY$ \cite[Proposition 3.2]{BoschLutkebohmertRaynaud90}. By \cite[Lemma 4.3, Theorem 4.5]{NS07} we may further assume that the same holds true for $K[[t^{1/i}]]$-points for all $1 \leq i \leq n$. The proposition thus reduces to the smooth case proven above. 
\end{proof}

If we apply this to $\CX^0_f$ and $\omega_f$ with $f\colon \BA^m \to \BA^1$ as in Section \ref{mizf} we get the following interpretation of the plethystic exponential of the motivic Igusa zeta function:

\begin{theorem}\label{plesym}
\[\sum_{n\geq 0} \int_{\Sym_n \CX^0_{f}} |\omega_{orb}|^{1/2} T^n =  \Exp\left( \BL^{-\frac{m-1}{2}} Z_f(\BL^{-\frac{m-3}{2}} T)\right).\]
\end{theorem}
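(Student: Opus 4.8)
The strategy is to apply Proposition~\ref{orbif} to $\CX = \CX_f$ with the Gelfand--Leray form $\omega = \omega_f$, and then repackage the resulting sum over partitions as a plethystic exponential. First I would check the integrability hypothesis of Proposition~\ref{orbif}: by Lemma~\ref{clin} we have $\int_{\CX^0_{f,i}} |\omega_f|_i = \BL^{-(i+1)(m-1)}[X^0_{f,i}] \in \calc_k$, so $|\omega_f|_i$ is integrable on $\CX^0_{f,i}$ for every $i\geq 1$, and Proposition~\ref{orbif} applies with $d = m-1$. This gives
\[ \int_{\Sym_n \CX^0_{f}} |\omega_{orb}|^{1/2} = \sum_{\lambda = (1^{a_1},2^{a_2},\dots)\vdash n} \BL^{-v(\lambda)} \prod_{i\geq 1} \Sym_{a_i}\!\left( \BL^{-(i+1)(m-1)}[X^0_{f,i}]\right), \]
where $v(\lambda) = \tfrac{m-1}{2}\sum_{i\geq 1} a_i(i-1)$.

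\textbf{Repackaging as a plethystic exponential.} Using $\Sym_{a_i}(\BL^{c} a) = \BL^{c a_i}\Sym_{a_i}(a)$ (valid in $\calc_k[\BL^{1/2}]$, possibly for half-integer $c$), I would pull the scalar powers of $\BL$ out of each $\Sym_{a_i}$ and combine them with $\BL^{-v(\lambda)}$. For a partition $\lambda$ with $a_i$ parts equal to $i$, the total exponent of $\BL$ multiplying $\prod_i \Sym_{a_i}([X^0_{f,i}])$ is
\[ -v(\lambda) - \sum_{i\geq 1} a_i (i+1)(m-1) = -\sum_{i\geq 1} a_i\!\left( \tfrac{m-1}{2}(i-1) + (i+1)(m-1)\right) = -\sum_{i\geq 1} a_i\!\left( \tfrac{m-1}{2} + \tfrac{(m-3)}{2}\,i + (m-1)\,i\,\tfrac{?}{} \right), \]
so the key bookkeeping step is to verify that this exponent equals $-\sum_i a_i\big(\tfrac{m-1}{2} + \tfrac{m-3}{2}i + (i-1)\cdot 0\big)$ — concretely, that $\tfrac{m-1}{2}(i-1) + (i+1)(m-1) = \tfrac{m-1}{2} + \tfrac{m-3}{2}\,i + (m-1)i$ does not quite work, and the correct reconciliation is with the target exponent $\tfrac{m-1}{2} + \tfrac{m-3}{2}\cdot(-1)$ absorbed into the $T$-substitution. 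Precisely, one wants each factor to read $\BL^{-\frac{m-1}{2}}\big(\BL^{-\frac{m-3}{2}}\big)^{i}\,[X^0_{f,i}]$ raised to the symmetric power $\Sym_{a_i}$, so that
\[ \int_{\Sym_n\CX^0_f}|\omega_{orb}|^{1/2} = \sum_{\lambda\vdash n}\ \prod_{i\geq 1}\Sym_{a_i}\!\left( \BL^{-\frac{m-1}{2}}\,\BL^{-\frac{m-3}{2} i}\,[X^0_{f,i}]\right); \]
summing over all $n$ and recognizing the right side as $\prod_{i\geq 1} \Exp\!\big(\BL^{-\frac{m-1}{2}}\BL^{-\frac{m-3}{2}i}[X^0_{f,i}]\,T^i\big) = \Exp\!\big(\sum_{i\geq 1}\BL^{-\frac{m-1}{2}}[X^0_{f,i}](\BL^{-\frac{m-3}{2}}T)^i\big) = \Exp\big(\BL^{-\frac{m-1}{2}}Z_f(\BL^{-\frac{m-3}{2}}T)\big)$ finishes the proof, using the definition $Z_f(T) = \sum_{i\geq 1}[X^0_{f,i}]\BL^{-mi}T^i$ and the identity $\BL^{-mi} = \BL^{-\frac{m-1}{2}}\cdot\BL^{-\frac{m-3}{2}i}\cdot\BL^{?}$ which must be checked carefully. [Here one verifies $-mi = -\tfrac{m-1}{2} - \tfrac{m-3}{2}i$ is \emph{false}, so the bookkeeping must instead track the exponent $-(i+1)(m-1)$ coming from Lemma~\ref{clin} rather than $-mi$ from the definition of $Z_f$; the two are reconciled since $Z_f(T) = \sum_i [X^0_{f,i}]\BL^{-mi}T^i$ and Lemma~\ref{clin} feeds in $\BL^{-(i+1)(m-1)}[X^0_{f,i}]$, and $(i+1)(m-1) = mi + (m-1-i) $, no — the honest check is $(i+1)(m-1) - mi = m-1-i$, and one needs $\tfrac{m-1}{2} + \tfrac{m-3}{2}i + v(\lambda)\text{-contribution} = (i+1)(m-1)$; expanding, $\tfrac{m-1}{2}(i-1) + (i+1)(m-1)$ is what appears, and $(i+1)(m-1) = \tfrac{m-1}{2} + \tfrac{m-3}{2}i$ would force $m-1 = \tfrac{m-1}{2}$ at $i=0$, absurd — so the $\Sym_{a_i}$-argument is genuinely $\BL^{-(i+1)(m-1)}[X^0_{f,i}]$, and the factorization of the generating function must be done with THAT as the input, matching $\Exp$ of $\sum_i \BL^{-(i+1)(m-1)}[X^0_{f,i}]T^i$ times the $\BL^{-v(\lambda)}$ corrections — which is exactly the content of the partition sum, and equals the claimed formula after the substitution.]

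\textbf{Main obstacle.} The real work is purely the exponent bookkeeping: one must show that the partition sum $\sum_{\lambda\vdash n}\BL^{-v(\lambda)}\prod_i\Sym_{a_i}(c_i)$ with $c_i = \BL^{-(i+1)(m-1)}[X^0_{f,i}]$ equals the $n$-th coefficient of $\Exp\big(\BL^{-\frac{m-1}{2}}Z_f(\BL^{-\frac{m-3}{2}}T)\big)$. The clean way is to not expand $v(\lambda)$ by hand but to observe that $\Exp$ is multiplicative: $\sum_n\big(\sum_{\lambda\vdash n}\BL^{-v(\lambda)}\prod_i\Sym_{a_i}(c_i)\big)T^n = \prod_{i\geq 1}\Big(\sum_{a\geq 0}\BL^{-\frac{m-1}{2}a(i-1)}\Sym_a(c_i)T^{ai}\Big) = \prod_{i\geq 1}\Exp\big(\BL^{-\frac{m-1}{2}(i-1)}c_i T^i\big)$, using $\BL^{-\frac{m-1}{2}(i-1)a}\Sym_a(c_i) = \Sym_a(\BL^{-\frac{m-1}{2}(i-1)}c_i)$. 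Then $\BL^{-\frac{m-1}{2}(i-1)}c_i = \BL^{-\frac{m-1}{2}(i-1)}\BL^{-(i+1)(m-1)}[X^0_{f,i}] = \BL^{-\frac{m-1}{2}}\BL^{-\frac{m-3}{2}i}[X^0_{f,i}]$ — this single algebraic identity $\tfrac{m-1}{2}(i-1) + (i+1)(m-1) = \tfrac{m-1}{2} + \tfrac{m-3}{2}i$ is the crux (check: LHS $= \tfrac{m-1}{2}i - \tfrac{m-1}{2} + (m-1)i + (m-1) = (\tfrac{3(m-1)}{2})i + \tfrac{m-1}{2}$; RHS $= \tfrac{m-3}{2}i + \tfrac{m-1}{2}$; these are \emph{not} equal, so in fact the correct normalization uses $[X^0_{f,i}]\BL^{-mi}$ from the definition of $Z_f$, meaning Lemma~\ref{clin}'s output must first be rewritten as $\BL^{-(i+1)(m-1)}[X^0_{f,i}] = \BL^{-(i+1)(m-1)+mi}\cdot\BL^{-mi}[X^0_{f,i}]\cdot\BL^{mi} $... ). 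The honest resolution, which I would carry out in detail, is: write $c_i = \BL^{-(m-1)}\cdot\BL^{-mi}\BL^{m-1}[X^0_{f,i}]$? No — rather, since $Z_f$'s coefficient of $T^i$ is $[X^0_{f,i}]\BL^{-mi}$, and Lemma~\ref{clin} gives $\int_{\CX^0_{f,i}}|\omega_f|_i = \BL^{-(i+1)(m-1)}[X^0_{f,i}] = \BL^{-(i+1)(m-1)+mi}\cdot([X^0_{f,i}]\BL^{-mi}) = \BL^{(m-1-i)}\cdot(\text{coeff of }T^i\text{ in }Z_f)$... this does not match either. I therefore expect the genuine main obstacle to be correctly identifying which normalization of $Z_f$ the authors intend and getting the half-integer powers of $\BL$ to line up, but once the substitution $\BL^{-\frac{m-3}{2}}T$ and the prefactor $\BL^{-\frac{m-1}{2}}$ are pinned down by matching the $i=1$ term, the $\Exp$-multiplicativity argument above gives the result for all $i$ simultaneously with no further difficulty.
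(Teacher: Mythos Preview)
Your strategy is exactly the paper's: apply Proposition~\ref{orbif} with $d=m-1$ and Lemma~\ref{clin}, then factor the partition sum as $\prod_{i\geq 1}\big(\sum_{a\geq 0}\Sym_a(\BL^{-\frac{m-1}{2}(i-1)}c_i)\,T^{ai}\big)$ with $c_i=\BL^{-(i+1)(m-1)}[X^0_{f,i}]$, and rewrite this product of $\Exp$'s as an $\Exp$ of a sum. The only gap is that you never close the exponent identity, and the source of your circling is a single omission.

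You correctly computed
\[
\tfrac{m-1}{2}(i-1)+(i+1)(m-1)=\tfrac{(m-1)(3i+1)}{2},
\]
so the coefficient of $T^i$ inside $\Exp$ on the left is $\BL^{-\frac{(m-1)(3i+1)}{2}}[X^0_{f,i}]$. Your mistake is that when you compared with the right-hand side you dropped the $\BL^{-mi}$ built into the definition $Z_f(T)=\sum_{i\geq 1}[X^0_{f,i}]\BL^{-mi}T^i$. The coefficient of $T^i$ in $\BL^{-\frac{m-1}{2}}Z_f(\BL^{-\frac{m-3}{2}}T)$ is $\BL^{-\frac{m-1}{2}-mi-\frac{m-3}{2}i}[X^0_{f,i}]$, and
\[
\tfrac{m-1}{2}+mi+\tfrac{m-3}{2}\,i=\tfrac{m-1}{2}+\tfrac{3(m-1)}{2}\,i=\tfrac{(m-1)(3i+1)}{2},
\]
which is precisely your LHS. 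That one line is the ``crux'' you were looking for; once written down, the proof is complete and identical to the paper's.
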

\begin{proof} Combining Proposition \ref{orbif} and Theorem \ref{clin} we get
\begin{align*} \int_{\Sym_n \CX^0_f} |\omega_{orb}|^{1/2} &=  \sum_{\lambda = (1^{a_1},2^{a_2},\dots) \vdash n}\BL^{-v(\lambda)} \prod_{i\geq 1} \Sym_{a_i}\left( \int_{\CX^0_{f,i}} |\omega_f|_i   \right)  \\  
&=   \sum_{\lambda = (1^{a_1},2^{a_2},\dots) \vdash n}\BL^{-v(\lambda)} \prod_{i\geq 1} \Sym_{a_i}\left( \BL^{-(i+1)(m-1)} [X^0_{f,i}] \right).
\end{align*}

Thus for the generating series we get

\begin{align*}
\sum_{n\geq 0}\int_{\Sym_n \CX^0_{f}}& |\omega_{orb}|^{1/2} T^n =   \sum_{\lambda = (1^{a_1},2^{a_2},\dots) } \BL^{-\frac{m-1}{2} \sum  a_i(i-1)} \prod_{i\geq 1} \Sym_{a_i}\left( \BL^{-(i+1)(m-1)} [X^0_{f,i}] \right) T^{\sum a_i i}\\
&= \prod_{i\geq 1} \left(   \sum_{a\geq 0}  \Sym_a \left( \BL^{-\frac{(m-1)(3i+1)}{2} }[X^0_{f,i}] \right) T^{ia}    \right) \\
&= \prod_{i\geq 1} \Exp(  \BL^{-\frac{(m-1)(3i+1)}{2} }[X^0_{f,i}]T^i   )\\
&= \Exp \left( \BL^{-\frac{m-1}{2}}\sum_{i\geq 1} \BL^{-im} [X^0_{f,i}] (\BL^{-\frac{m-3}{2}} T)^i    \right) = \Exp\left( \BL^{-\frac{m-1}{2}} Z_f(\BL^{-\frac{m-3}{2}} T)\right).
 \end{align*}
\end{proof}

\begin{remark}\label{m3}If we take $f \in k[x,y,z]$, then  the generic fiber of $\CX_f$ is a smooth surface and $h:\Hilb_n(\CX_{f|k((t))}) \to \Sym_n( \CX_{f|k((t))})$ is a crepant resolution. Thus by the change of variables formula 
\[ \sum_{n\geq 0}\int_{\Hilb_n(\CX^0_{f})} |h^*\omega_{orb}|^{1/2} T^n =   \Exp\left( \BL^{-1} Z_f(T)\right).\]
It would be interesting to see, if the left hand side admits a description in term of the Hilbert scheme of the surface singularity $f=0$ similar to Theorem \ref{gelhil}.
\end{remark}

\subsection{Relation to the Gelfand form} Let $f\in k[X,Y]$ be an isolated plane curve singularity as in Section \ref{gelin}. In this section we give a precise relation relation between $|\omega_{orb}|$ and $|\omega^{gel}_n|$ on $\Sym_n \CX^0_{f}$. 

For this consider the discriminant subscheme $\tilde{\Delta} \subset (\BA^2)^n$ given by
\[\widetilde{\Delta} = \bigcup_{i\neq j} \{ x_i = x_j, y_i=y_j\}.\]

It follows from the results of Haiman, see e.g.  \cite{Haiman02a}, that the ideal $J$ defining $\widetilde{\Delta}$ equals the ideal generated by the diagonally alternating polynomials on $X_1,\ldots, X_n, Y_1,\ldots, Y_n$ and is generated by the functions $\{\Delta_M\}_{|M| = n}$ introduced in Section \ref{ha2}. Furthermore the image $\Delta \subset \Sym_n(\BA^2)$ of $\widetilde{\Delta}$ has defining ideal $J \cap k[X_1,Y_1,\dots,X_n,Y_n]^{S_n}$ which has generators $\{\Delta_M\Delta_{M'} \}_{|M| =|M'|=n}$.

We write $\widetilde{\Delta}_f$ and $\Delta_f$ for the intersection with $\CX_f^n \subset (\BA^2)^n$ and $\Sym_n(\CX_f) \subset \Sym_n(\BA^2)$ respectively.

\begin{proposition}\label{orbgel} For any $n \geq 1$ we have 
\[ \BL^{-\ord_{\Delta_f}/2} |\omega_{orb}|^{1/2} = |\omega^{gel}_n| \]
on $\Sym_n \CX^0_{f}$.
\end{proposition}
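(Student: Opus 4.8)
The plan is to compare the two definable forms chart-by-chart on the Haiman cover of $\Hilb_n(\CX_f)$, using the isomorphism $\Sym_n\CX_f^0\cong\Hilb_n(\CX_f^0)$ and the fact (from the proof of the gluing proposition) that every $K[[t]]$-point in question factors through the smooth locus. So first I would fix a partition $\lambda\vdash n$ and work on $U_{f,\lambda}^{\mathrm{sm}}$, where $|\omega_n^{gel}|=|\omega_\lambda^{gel}|=|\omega^{[n]}/f_\lambda^{[n]*}\nu|$. On the other side, the orbifold form $|\omega_{orb}|^{1/2}$ is pulled back from $\Sym_n\CX_f$; I would compute it by pulling $\omega_n^{\otimes 2}$ back along the Hilbert--Chow map on the open locus where the support is reduced, and then extend by definability/continuity. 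The key bookkeeping identity is the relation \eqref{ftof}: on the reduced locus the Gelfand--Leray form $\omega_f$ on $\CX_f$ and the symplectic form restrict compatibly, and $df$ vanishing defines $\CX_f\subset\BA^2$, so $p_i^*\omega_f$ on $\CX_f^n$ corresponds under Hilbert--Chow to $\omega^{[n]}$ together with the Jacobian of the coordinate change, which is exactly (a power of) $\Delta_\lambda=\det B_\lambda(x,y)$.

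Concretely, I expect the computation to run as follows. Over the generic point, on the locus of $n$ distinct reduced points, $\Sym_n\CX_f$ is smooth and $\Hilb_n(\CX_f)\to\Sym_n(\CX_f)$ is an isomorphism there; pulling $\omega_{orb}=\omega_n^{\otimes 2}$ through the symmetrization $\CX_f^n\to\Sym_n\CX_f$ and then through Hilbert--Chow, the form $p_1^*\omega_f\wedge\cdots\wedge p_n^*\omega_f$ expressed in the chart coordinates $f^{[n]}_{\lambda}$ (i.e. using $f=t$ so that the fibre coordinates on $\CX_f$ become the $y_i$, and the base coordinates become the $X$-coordinates of the points) acquires precisely the factor $\Delta_\lambda(x,y)$ by \eqref{ftof}, because the matrix $B_\lambda(x,y)$ is exactly the change of basis between ``values at the points'' and ``coordinates in the monomial basis $M_\lambda$''. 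Squaring, $|\omega_{orb}|^{1/2}=|\Delta_\lambda|\cdot|\omega^{[n]}/f_\lambda^{[n]*}\nu|=|\Delta_\lambda|\cdot|\omega_\lambda^{gel}|$ on $U_{f,\lambda}^{\mathrm{sm}}$. It then remains to identify $|\Delta_\lambda|$ with $\BL^{-\ord_{\Delta_f}/2}$: by Lemma \ref{hev}, $\ord_t\Delta_\lambda(x,y)$ is the minimum of $\ord_t\Delta_M(x,y)$ over all $M$ with $|M|=n$, and since the ideal of $\widetilde\Delta_f$ is generated by the $\Delta_M$ while the ideal of $\Delta_f\subset\Sym_n\CX_f$ is generated by the products $\Delta_M\Delta_{M'}$, we get $\ord_{\Delta_f}(x)=\min_{M,M'}\ord_t(\Delta_M\Delta_{M'})=2\min_M\ord_t\Delta_M=2\,\ord_t\Delta_\lambda(x,y)$, hence $|\Delta_\lambda|=\BL^{-\ord_t\Delta_\lambda}=\BL^{-\ord_{\Delta_f}/2}$, and one checks this is independent of the choice of Hilbert--Chow preimage $(x,y)$ since $\ord_t\Delta_M$ is $S_n$-invariant.

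For this to make sense I would need that $\ord_{\Delta_f}(x)$, for $x\in\Sym_n\CX_f^0(K[[t]])$, is computed via the pulled-back support: here I use the relation $\ord_{\CZ\cap\CY}(y)=\ord_\CZ(y)$ from the background section together with the fact that the generic fibre is reduced, so the $K((t))$-point of $\Sym_n\CX_f$ lifts canonically to a $\overline{K((t))}$-point $(x,y)$ of $\CX_f^n$ and the defining equations $\Delta_M\Delta_{M'}$ of $\Delta_f$ pull back to $\Delta_M(x,y)\Delta_{M'}(x,y)$. I would also remark that the glued form $|\omega_{orb}|^{1/2}$ is indeed independent of $\lambda$ by the same argument as in the gluing proposition (the $u_{\lambda\mu}$ are units on points factoring through the smooth locus), and that $\ord_{\Delta_f}\equiv 0$ precisely when the point meets the reduced locus, consistent with $|\Delta_\lambda|\equiv 1$ there.

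The main obstacle I anticipate is the careful identification, at the level of forms rather than absolute values, of the Jacobian factor produced by the two successive pullbacks (symmetrization $\CX_f^n\to\Sym_n\CX_f$ and Hilbert--Chow $\Hilb_n(\CX_f)\to\Sym_n\CX_f$) with $\Delta_\lambda$: one must track that the symplectic form $\omega^{[n]}$ on $U_{f,\lambda}$, restricted along $f^{[n]}_\lambda=0$ to $U_{f,\lambda}$, matches $\bigwedge_i(p_i^*\omega_f)$ up to the determinant $\det B_\lambda(x,y)$ and up to a function with absolute value $1$ (which is all we need, since $|\omega|=|\omega'|$ whenever they differ by such a function). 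Handling the non-reduced locus requires invoking that it has positive codimension and both sides are definable forms, so equality on a dense definable subassignment suffices; alternatively one argues on the smooth locus of $\Hilb_n(\CX_f)$ directly using that $\Hilb_n^1(f)$ is the smooth locus (Lemma \ref{sml}) and the $\Delta_\lambda$ are the correct local equations of $\widetilde\Delta$ by Haiman's results, which is the route I would ultimately take to keep everything on a smooth ambient space.
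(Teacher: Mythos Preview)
Your approach is essentially the paper's: pull everything back to $\CX_f^n$ via $\pi$, use crepancy of Hilbert--Chow to identify $\pi^*\omega^{[n]}$ with the standard form, use \eqref{ftof} to compute $\pi^*f_\lambda^{[n]*}\nu$ and hence $\pi^*\omega^{gel}_\lambda$ as a multiple of $p_1^*\omega_f\wedge\cdots\wedge p_n^*\omega_f$, and then invoke Lemma~\ref{hev} together with the description of the ideal of $\Delta$ by the products $\Delta_M\Delta_{M'}$ to turn $|\Delta_\lambda|$ into $\BL^{-\ord_{\Delta_f}/2}$.

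Two points to tighten. First, your displayed identity $|\omega_{orb}|^{1/2}=|\Delta_\lambda|\cdot|\omega_\lambda^{gel}|$ has the Jacobian on the wrong side. The computation (and your own final paragraph) gives
\[
\pi^*\omega^{gel}_\lambda=\bigl(\Delta_\lambda(x,y)+t\alpha_1(x,y)+\cdots\bigr)\,p_1^*\omega_f\wedge\cdots\wedge p_n^*\omega_f,
\]
so $|\omega^{gel}_n|=|\Delta_\lambda|\cdot|\omega_{orb}|^{1/2}$, which combined with $|\Delta_\lambda|=\BL^{-\ord_{\Delta_f}/2}$ is exactly the statement; your version would flip the sign in the exponent. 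Second, the correction terms $t\alpha_i$ cannot simply be ignored by passing to the reduced locus and extending by density: equality of definable forms is pointwise and orders can jump. The paper handles this by observing that $\pi^*\omega^{gel}_\lambda$ is $S_n$-invariant while $\omega_f^{\wedge n}$ is alternating, so each $\alpha_i$ is alternating and hence lies in the ideal generated by the $\Delta_M$; Lemma~\ref{hev} then gives $\ord_t(\Delta_\lambda+t\alpha_1+\cdots)=\ord_t\Delta_\lambda$ at every point of $U_{f,\lambda}(K[[t]])$.
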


\begin{proof} The proof goes by comparing the underlying algebraic forms defining both sides when pulled back under $\pi: \CX_f^n \to \Sym_n(\CX_f)$. For the left hand side recall that $\pi^*\omega_{orb} = (p_1^*\omega_f \wedge \dots \wedge p_n^*\omega_f)^{\otimes 2}$ by definition. Furthermore for any $x \in \Sym_n\CX^0_{f}(K[[t]])$ and any lift $\tilde{x} \in \CX_f^n(\overline{K[[t]]})$ we have $\ord_{\Delta_f}(x) = \ord_\Delta(x) = 2 \ord_{\widetilde{\Delta}}(\tilde x)$ since the generators of the ideal of $\Delta$ are the pairwise products of the generators of the ideal defining $\widetilde{\Delta}$. 

For the right hand side recall that we have an open cover $\bigcup_{\lambda \vdash n} U_{f,\lambda} = \Hilb_n(\CX_f)$ and $ |\omega^{gel}_n|$ is glued together from the absolute values of the forms $\omega_\lambda^{gel} = \omega^{[n]}/ f_\lambda^{[n]*}\nu$ on $U_{f,\lambda}^{sm}$. Let $\CX'_f = \CX_f \setminus \{0\}$ denote the smooth locus of $\CX_f$. Then $\Sym_n(\CX'_f) \cong \Hilb_n(\CX'_f)$ and we can think of $U'_{f,\lambda}= U_{f,\lambda}^{sm}\cap \Hilb_n(\CX'_f)$ as an open in $\Sym_n(\CX_f)$ and hence compute $\pi^*\omega_{\lambda | U'_{f,\lambda}}^{gel}$. 

First since $\Hilb_n(\BA^2) \to \Sym_n(\BA^2)$ is crepant, it follows that $\omega^{[n]}$ pulls back to the standard form $dx_1\wedge dy_1 \wedge \dots \wedge dy_n$ on $(\BA^2)^n$. In order to compute $\pi^*f_\lambda^{[n]*}\nu$ we use that by \eqref{ftof} 

\[   ( f_\lambda^{[n]}\circ \pi)(x,y)=B_\lambda^{-1}(x,y) \begin{pmatrix}
         f(x_1,y_1)\\
         f(x_2,y_2)\\ 
         \vdots \\ 
         f(x_n,y_n) 
     \end{pmatrix}.\]
Thus $\pi^*\omega^{gel}_\lambda=\pi^*\omega^{[n]}/\pi^*f_\lambda^{[n]*}\nu$  can be written as 
\[ \pi^*\omega^{gel}_\lambda = (\Delta_\lambda(x,y) + t \alpha_1(x,y) + t^2\alpha_2(x,y) + \dots) p_1^*\omega_f\wedge \dots \wedge p_n^*\omega_f,\] 
with $\alpha_i \in K[x,y]$. Since $\pi^*\omega^{gel}_\lambda$ is $S_n$-invariant, all the $\alpha_i(x,y)$ are alternating. 

Now given any $z \in \Sym_n \CX^0_{f}(K[[t]]) \cong \Hilb_n(\CX_f^0)(K[[t]])$ there exists a $\lambda \vdash n$ such that $x \in U_{f,\lambda}(K[[t]])$. Let $(x,y) \in (\BA^2)^n(\overline{K((t))})$ be such that $\pi(x,y) = z_{|K((t))}$. Then from Lemma \ref{hev} and the fact that the ideal $J$ defining $\widetilde{\Delta}$ is generated by the functions $\{\Delta_M\}_{|M| = n}$ we finally get 
\[ |\Delta_\lambda(x,y) + t \alpha_1(x,y) + t^2\alpha_2(x,y) + \dots | = |\Delta_\lambda(x,y)| = \BL^{-\ord_{\widetilde{\Delta}}(x,y)}, \]
which finishes the proof.
\end{proof}

The motivic version of Igusa's monodromy conjecture \cite{DL98} relates the poles of $Z_f(T)$ to the roots of $\zeta_{f,0}(T)$. Hence it might be interesting to relate the integrals of $|\omega_{orb}|$ and $|\omega^{gel}_n|$ on $\Sym_n \CX^0_{f}$. Proposition \ref{orbgel} gives a way of doing so by considering the series 
\[  Q_f(s,T) = \sum_{n\geq 0} \int_{\Sym_n \CX^0_{f}} |\Delta_f|^s |\omega_{orb}|^{1/2} T^n,\]
for some formal parameter $s$. In Section \ref{cep} below we discuss a few examples showing that the computation of $Q_f(s,T)$ seems to be an interesting challenge.

\section{Relation to Floer theories}\label{rkt}
\subsection{Fixed point Floer homology}
Let $f\in \C[X,Y]$ and $f^{-1}(0)=C\subseteq \BA^2_\C$ the plane curve it defines. Denote the Milnor fiber $f^{-1}(\epsilon)\cap B_{\delta,0}$ of $f$ by $\Sigma$ and let $\varphi: \Sigma\to \Sigma$ be the symplectomorphism given by the monodromy of the Milnor fibration. The fixed point Floer homology of $\varphi^n:\Sigma\to \Sigma$ for $n>1$ is a graded $\BZ/2$-vector space $$HF_*(\varphi^n,+)$$ associated to $\varphi^n:\Sigma\to \Sigma$, only depending on the embedded contact type of the link $L$ of $f$. For the definition, see \cite[6.2.5.]{debobadillapelka}.
In this case we have

\begin{theorem}\cite[Theorem 1.1.]{arcfloer}
\label{prop:fixedpointfloer}
There is an isomorphism of graded vector spaces
$$HF_*(\varphi^n,+)\cong H_c^{*+2n+1}(X^0_{f,n})$$
 where $X^0_{f,n}$ is the $m$-th restricted contact locus as before.
\end{theorem}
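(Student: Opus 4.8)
The plan is to compute both sides of the claimed isomorphism from a single combinatorial input --- an embedded resolution $\pi\colon Y \to \mathbb{A}^2$ of the curve $C=f^{-1}(0)$ --- and then to match the two answers term by term. Let $E_i$ be the exceptional prime divisors of $\pi$, $N_i=\ord_{E_i}(f\circ\pi)$ their multiplicities, $\nu_i$ the coefficients of the relative canonical divisor, and $E_i^\circ=E_i\setminus\bigcup_{j\neq i}E_j$ the open strata. The target is a bijection between (i) the locally closed pieces into which $X^0_{f,n}$ decomposes, indexed by the divisors $E_i$ with $N_i\mid n$, and (ii) the families of period-$n$ fixed points of the geometric monodromy $\varphi$, which turn out to localize near those same divisors. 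Since for plane curves the dual graph of $\pi$ is a tree with rational vertices, both computations ultimately become explicit combinatorics on that weighted tree, which is why they can be reconciled.

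For the contact-locus side I would apply the change-of-variables/transformation rule for arc and jet spaces under $\pi$, following Denef--Loeser: an arc in $X^0_{f,n}$ lifts to $Y$, and stratifying by which exceptional divisor it meets transversally, together with its order of contact there, produces a decomposition of $X^0_{f,n}$, up to piecewise-trivial affine fibrations, into pieces $Z_i$ for $N_i\mid n$. Each $Z_i$ fibers over a connected $N_i$-fold \'etale cover $\widetilde{E_i^\circ}$ of $E_i^\circ$ with affine-space fibers whose dimension is an explicit affine-linear function of $n$, $N_i$ and $\nu_i$. Because each $E_i\cong\mathbb{P}^1$ and $E_i^\circ$ is $\mathbb{P}^1$ with finitely many points removed, the classes $H_c^*(\widetilde{E_i^\circ})$ are elementary, so $H_c^*(X^0_{f,n})$ is read off entirely from the resolution data.

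For the Floer side I would use A'Campo's model of the Milnor fibration, in which $\varphi$ can be chosen so that all of its periodic points lie in the collar regions lying over the $E_i^\circ$; the period-$n$ fixed point set of $\varphi^n$ is then a clean Morse--Bott submanifold, a disjoint union over the divisors with $N_i\mid n$ of circle-bundle-like families parametrized by a cover of $E_i^\circ$. Feeding this into the Morse--Bott spectral sequence computing $HF_*(\varphi^n,+)$, the first page is $\bigoplus_{N_i\mid n}H_{*-\mu_i}(\widetilde{E_i^\circ})$, where the local shift $\mu_i$ is the Conley--Zehnder index of the corresponding orbit family, which I would compute from the log discrepancy data; the global normalization then produces the $*+2n+1$ in the statement. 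Alternatively one can organize this as a surgery/Mayer--Vietoris computation of $HF_*(\varphi^n,+)$ along the Seifert decomposition of the link complement, matched against a parallel recursion for the contact loci along the splice diagram.

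The main obstacle is not matching Euler characteristics --- that is already A'Campo's identity $\zeta_{f,0}(T)=\prod_i(1-T^{N_i})^{-\chi(E_i^\circ)}$ combined with Denef--Loeser --- but upgrading it to an isomorphism of graded vector spaces, and here there are two hard points. First, one must show the Morse--Bott spectral sequence for $HF_*(\varphi^n,+)$ degenerates, i.e. that there is no Floer differential linking orbit families from distinct exceptional divisors; for plane curves this should follow from an action/positivity argument using that the weights $N_i$ vary monotonically along the edges of the dual tree, so the relevant orbits have strictly ordered symplectic actions. Second, and the most delicate, is the uniform-in-$n$ grading comparison: one must check that the Conley--Zehnder index $\mu_i$ and the cohomological degree plus the affine-fiber shift of $Z_i$ differ by exactly the constant $2n+1$, which amounts to a careful bookkeeping of log discrepancies against Maslov-type indices, carried out consistently on every piece at once. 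That bookkeeping is where the real work lies.
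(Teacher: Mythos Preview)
The paper does not prove this theorem at all: it is quoted verbatim as \cite[Theorem 1.1.]{arcfloer} and used as a black box, with only the remark that it implies the Euler-characteristic identity and that it is conjectured more generally in \cite{BdBLN22}. So there is no ``paper's own proof'' to compare your proposal against; any proof you supply would be reproducing or reproving the result of de la Bodega and de Poza, not something in the present paper.

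That said, your sketch is broadly the right \emph{shape} for how that theorem is actually proved in \cite{arcfloer}: both sides are computed from an embedded resolution, the contact-locus side via the Denef--Loeser/ELM stratification by exceptional divisors with $N_i\mid n$, and the Floer side via an A'Campo/McLean-type model of the monodromy whose periodic orbits localize over the same strata, followed by a Morse--Bott spectral sequence. You correctly identify the two genuine difficulties --- degeneration of the spectral sequence (no differentials between strata) and the uniform grading shift --- and these are exactly where the real work in \cite{arcfloer} lies. Your heuristic for degeneration (``actions are strictly ordered along the tree'') is too optimistic as stated: the argument in \cite{arcfloer} requires a more careful analysis, and the grading match involves McLean's formula relating Conley--Zehnder indices to multiplicities and log discrepancies rather than a direct bookkeeping. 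If your goal is only to cite the result, as the present paper does, none of this is needed; if your goal is to actually reprove it, you should consult \cite{arcfloer} directly, since your outline, while on the right track, glosses over precisely the steps that constitute the content of that paper.
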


The theorem is expected to hold for any hypersurface \cite{BdBLN22}. 
A fortiori, this implies an equality of Euler characteristics $\chi(HF_*(\varphi^n,+)=-\chi(X^0_{f,n})$.

\subsection{Knot Floer homology}
We consider another Floer-theoretic invariant of the link $L$ defined by $f$. It is the minus version of the {\em knot Floer homology} of $L$, defined in \cite{OZ} for $\mathbb{Z}/2\BZ$-coefficients and in \cite{gridhomologybook} for $\BZ$-coefficients.

If $L$ is a link with $r$ components, the knot(/link) Floer homology is a $\BZ^{r+1}$-graded $\BZ[U]$-module
$$HFL^-(L,S^3)=\bigoplus_{v\in \BZ^r, d\in \BZ } HFL^-_d(L,S^3,v)$$
where $U$ is an operator which decreases the Maslov/homological grading by $2$ and the Alexander grading by $\underline{1}$. 
We will drop $S^3$ from the notation, as it is always the ambient $3$-manifold for us. Taking the Euler characteristic in the Alexander grading, one has 
$$\sum_{v\in\BZ^r}\chi(HFL^-(L,v))t^v=\begin{cases} \Delta(t_1,\ldots, t_r), & \text{ if } r>1 \\
\frac{\Delta(t)}{1-t}, & \text{ if } r=1\end{cases}$$

It follows from \cite[Corollary 1.6]{GorskyNemethi} and Proposition  \ref{prop:motivicequalsgel} that we have the following.
\begin{corollary}
\label{thm:motivichilb}
We have an equality
$$P_{gel}(T)=\sum_{v\in \BZ^r,d\in \BZ} \dim HFL^-_d(L,v) \BL^{d-2h(\underline{v})} T^{|v|},$$
where $h(\underline{v})=\codim_{k[[x,y]]}J_{\underline{v}}$ as in Section \ref{mps}.
\end{corollary}
\begin{remark}
Recall that  \cite[Corollary 1.5.3.]{GorskyNemethi}  is proved by relating both sides to the lattice homology of isolated singularities defined by Némethi. Thanks to the integral version of knot Floer homology in  \cite{gridhomologybook} the coefficient issue in \cite[Remark 2.1.2]{GorskyNemethi} can be avoided.
\end{remark} 

\subsection{Questions and speculations}
\begin{figure}[H]
\[\begin{tikzcd}
	&& {} \\
	& {HFL^-(L,S^3)} & {} & {HF_*(\phi^n,+)} \\
	{} &&&& {} \\
	& {P_{gel}(T)} && {H_c^*(X_{f,n}^0)} \\
	{Q_f(s,T)} \\
	& { \Exp\left( \BL^{-\frac{1}{2}} Z_f(\BL^{\frac{1}{2}} T)\right)} && {[X_{f,n}^0]} \\
	& {} & {} & {}
	\arrow[<->,"{\ref{floereq}}"'{pos=0.3}, shorten <=9pt, shorten >=9pt, dashed, from=2-4, to=2-2]
\arrow[<->,"{\ref{Q1}}"'{pos=0.3}, shorten <=9pt, shorten >=9pt, dashed, from=4-2, to=4-4]
	\arrow["Symplectic"{pos=0},"Algbraic"'{pos=0}, no head, from=3-1, to=3-5]
	\arrow[<->,"{\ref{thm:motivichilb}}"'{pos=0.4}, shorten <=6pt, shorten >=6pt, from=4-2, to=2-2]
	\arrow[<->, "{\text{\cite{arcfloer}}}"'{pos=0.4}, shorten <=6pt, shorten >=6pt,  from=4-4, to=2-4]
	\arrow["{s=1/2}", squiggly, from=5-1, to=4-2]
	\arrow["{s=0}"', squiggly, from=5-1, to=6-2]
	\arrow[<->,"{\ref{plesym}}"'{pos=0.4}, shorten <=17pt, shorten >=11pt, from=6-4, to=6-2]
	\arrow[, shorten <=12pt, no head, from=7-3, to=1-3]
	\arrow["exponential", bend left=10, shift right=5, shorten <=40pt, shorten >=40pt, from=7-4, to=7-2]
\end{tikzcd}\]
\caption{}\label{diagram}
\end{figure}


In  \cite{GhigginiSpano} Ghiggini and Spano show that for fibered knots
$\widehat{HFL}(\overline{K},1-g)\cong HF_*(\varphi,\sharp)$ and more generally conjecture
that $$\widehat{HFL}(\overline{K},i)\cong PFH_{i+g}(T_\varphi,\sharp)$$ where $\widehat{HFL}(-)$ denotes the hat version of  knot Floer homology, $HF_*(\varphi,\sharp)$ is an ''intermediate" variant of fixed point Floer homology, and $PFH(T_\varphi,\sharp)$ denotes a certain version of the periodic Floer homology of (the mapping torus of) $\varphi$. We refer to {\em loc. cit.} for the precise definitions, but remark that multisets of fixed points give generators of the periodic Floer complex and more generally the theme of realizing Heegaard-Floer theory in terms of Fukaya categories of symmetric powers has been a recurring theme in the Floer theory literature, see for example \cite{Auroux}. It thus seems possible to expect a relation of the form 
\begin{equation}\label{floereq}\text{"Exponential of fixed point Floer homologies of the iterates of $\varphi$"}=HFL^-(K).\end{equation}

Combining Corollary \ref{thm:motivichilb} with \cite[Theorem 1.1.]{arcfloer} this leads to the following natural question which can be phrased in purely algebraic terms. 

\begin{question}\label{Q1} Is there an exponential-type relation between $H_c^*(X^0_{f,n})$ and the cohomology or motives of $\Hilb_n^1(f)$?
\end{question}

Let us also mention that Rossinelli  \cite{Ro24} has shown a relation between curvilinear Hilbert schemes and motivic classes of certain open subvarieties in $X^0_{f,n}$, but we don't see a direct connection with Question \ref{Q1} at the moment. 
We summarize the relations between the various invariants discussed above in Figure \ref{diagram}.

\section{Computations and examples}\label{cep}
In this section we discuss the generating series $Q_f(T)$ in the case of the smooth curve \(f=x\) and of
the simplest non-trivial singularity \(f=xy\). 

\subsection{Smooth curve}
\label{sec:smooth-curve}
The Hilbert scheme \(\Hilb_n(f)\) of the smooth line \(f(x,y)=x\)  is an affine space \(\bbA^n=\Sym_n(\bbA^1)=\Hilb^1_n(f)\) that can be identified with the space
of monic  polynomials of degree \(n\), \(y^n+\sum_ia_iy^i\). Thus the discriminant \(\Delta_f\) agrees with the classical discriminant \(\mathrm{Disc}_n\).
Moreover, since \(\Hilb_n(f)\) is smooth we have \(|\omega^{gel}|=|da_0\wedge \dots \wedge da_{n-1}|\) and the function
\(Q_f(s,T)\) is a generating function for the Igusa zeta functions of discriminants:
\[Q_f(s,T)=\sum_{n\ge 0} I_n(s)T^n,\quad I_n(s)=\int_{\bbA^n_0}|\mathrm{Disc}_n|^{s-1/2}|da_0\dots da_{n-1}|.\]
For small values of \(n\) the corresponding Igusa zeta function can be computed. For \(n=1\) the discriminant is constant and
\(I_1(s)=\LL^{-1}\).  For \(n=2\) the discriminant
is \(a_1-4a_0^2\). Thus after the change of variables \(\tilde{a}_1=a_1-4a_0^2,\tilde{a}_0=a_0\) we obtain:
\[I_2(s)=\int_{\bbA^2_0}|\tilde{a}_1|^{s-1/2}|d\tilde{a}_0d\tilde{a}_1|=(\LL-1)\frac{\LL^{s-5/2}}{\LL^{s+1/2}-1}.\]

Since the discriminant of a cubic polynomial \(x^3+px+q\) is equal to \(-4p^3-27q^2\) the computation of the integral reduces to the computation of the Igusa zeta
function for the cusp singularity. This computation can be found in many places, for example in chapter 1, \(\S 3\)  of  \cite{Chambert-LoirNicaiseSebag18}:
\[I_3(s)=\frac{\LL-1}{X^6\LL^5-1} \left(\frac{1}{\LL^2}+X^3+\frac{X^2}{\LL}+X^4\LL+\frac{\LL-1}{\LL^3(X\LL-1)} \right),\quad \LL^{s-1/2}=X.\]

The computation of the integral \(I_4(s)\) is equivalent to the computation of the Igusa zeta function for the swallow-tail singularity and we postpone the discussion 
for the forthcoming paper where we use Fulton-MacPherson space formalism to compute the integrals \(I_n\).   On other hand we check the properties of the function
\(Q_f(s,T)\) with the above computations.

Indeed, since \([\Hilb_n^1(f)_0]=1\)  we obtain \(P_{gel}(T)=\sum_{n\ge 0} (T/\LL)^n\).  As check of formula \eqref{eq:interpolate} one can verify that \(I_n(1/2)=\LL^{-n}\).
To discuss the second part of the formula \eqref{eq:interpolate} we need to compute the Igusa  zeta function for \(f\), which is by definition \eqref{eq:IgusaZ} given by
\(Z_f(T)=\sum_{n\ge 1}(T/\LL)^n\). Thus the formula \eqref{eq:interpolate}
states that
\[\prod_{n\ge 1}(1-T^n)^{\LL^{-(n+1)/2}}=1+\sum_{n\ge 1}I_n(0)T^n.\]

The first four terms of the \(T\)-expansion are consistent with the above computations. For example the terms in front of \(T^2 \) are \(\LL^{-2}+\LL^{-3/2}\) and
the terms in front of \(T^3\) are \(\LL^{-3}+\LL^{-5/2}+\LL^{-2}\).

\subsection{Nodal singularity}
\label{sec:nodal-singularity}

The Hilbert scheme \(\Hilb_n(f)_0\) in this case is known to be
a chain of projective lines. Indeed, to describe \(\Hilb_n(f)_0\) we need to classify codimension  \(n\) ideals in the local ring of this singularity which
is equal to \(k\oplus x\cdot k[[x]]\oplus y\cdot k[[y]]\) as a \(k\)-vector space with \(xy=0\)  as the main defining relation. In details, the ideals of codimension \(n\) are
union of \(n-1\) projective lines \(\mathbb{P}^1_{k}=\{ (\alpha_kx^k+\beta_ky^{n-k},x^{k+1},y^{n-k+1})\}\),  \(\alpha_k,\beta_k\ne \vec{0}\), \(k=1,\dots,n-1\).
The points ideal for \(\beta_k=0\) is equal to the ideal for \(\alpha_{k-1}=0\) and \(\alpha_k=0\) is equal to the ideal for \(\beta_{k+1}=0\).

From the above description we see that the one generator locus \(\Hilb_n^1(f)_0\) inside \(\Hilb_n(f)_0\) is the union of \(\mathbb{G}_m\subset \mathbb{P}^1_k\) defined by the
condition \(\alpha_k\beta_k\ne 0\). Thus we have computed the \(P_{gel}(T)\):
\[P_{gel}(T)=1+(\LL-1)\sum_{n=2}^\infty(n-1)\LL^{-n} T^n=1+T^2/(\LL-T)^2. \]

The Igusa zeta function can be computed by elementary means as well, since
\([X^0_{f,n}]=(n-1)(\LL-1)\LL^{n-2}\)
\begin{equation*}
  Z_f(T)=\sum_{n\ge 1}[X^0_{f,n}]\LL^{-2n}T^n
  =(\LL-1)\LL^{-2}T^2/(1-T\LL^{-1})^2.
  \end{equation*}

  The function \(Q_f(s,T)\) interpolates between the functions \(P_{gel}(T)\) and \(Z_f(T)\) as we state in \eqref{eq:interpolate}.
  In a forthcoming manuscript, we provide and argument similar to Proposition~\ref{orbif} that 
yields a formula for the function \(Q_f(s,T)\):
  \[\int_{\Sym_n(\CX_f)}|\omega_{orb}|^{1/2}|\Delta_f|^s=\sum_{\lambda = (1^{a_1},2^{a_2},\dots) \vdash n}\BL^{-v(\lambda)}
    \int_{\prod_i \Sym_{a_i}(\CX^0_i) }|\Delta_f|^s\prod_{i\geq 1} |\omega_f|_i   .\]  

  For $n\leq 3$  the integrals can be computed by elementary means: for \(n=1\) the integral vanishes. For \(n=2\), only the term with \(\lambda=(2)\) appears with \(\nu(\lambda)=1/2\). The locus \(\CX^0_{f,2} \) consist of the pairs
  \(x,y\in t^{1/2}k[[t^{1/2}]]\) such that \(xy=t\). Hence \(|\Delta_f|\) is constant along \(\CX^0_{f,2}\) and equal \(\LL\). Thus combining this observation with the Lemma~\ref{clin} and
  \([X_{f,2}^0]=[\{x,y\in k[t]/t^3|x,y=t^2\in k[t]/t^3\}]=(\LL-1)\LL^2\) we obtain:
  \[\LL^{-\nu((2))}\int_{\CX^0_{f,2}}|\Delta_f|^s|\omega_f|_2=\LL^{-1/2}\LL^{-s}(\LL-1)\LL^{-1}.\]

  Similarly, for \(n=3\) the only non-trivial term in the formula is the term with \(\lambda=(3)\). Respectively, \(\nu((3))=1\) and the space \(\CX^0_{f,2}\) consists of two connected components
  \(\{x=t^{1/3}\varphi(t^{1/3}),y=t^{2/3}\psi(t^{1/3})\}\) and \(\{x=t^{2/3}\varphi(t^{1/3}),y=t^{1/3}\psi(t^{1/3})\}\) with \(\varphi,\psi\in k[[t^{1/3}]]\) with \(\varphi\psi=1\).
  Hence \(|\Delta_f|\) is constant along the both components and it is equal \(\LL^2\). We can again use Lemma~\ref{clin} and
  \([X_{f,3}^0]=2(\LL-1)\LL^3\) to complete the computation:
  \[\LL^{-\nu((3))}\int_{\CX^0_{f,3}}|\Delta_f|^s|\omega_f|_2=2\LL^{-1}\LL^{-2s}(\LL-1)\LL^{-1}.\]
  
  Thus we can write the start of \(T\)-expansion of the function \(Q_f(s,T)\):
  \[Q_f(s,T)=1+(\LL-1)(\LL^{-s-3/2}T^2+2\LL^{-2s-2}T^3)+\dots.\]
  If we substitute \(s=1/2\) into the expansion then we obtain the first four terms of the series \(P_{gel}(T)\). On the other hand the substitution \(s=0\) yields the first four terms of
  the expansion of
  \begin{multline*}
  \Exp(\LL^{-1/2}Z_f(\LL^{1/2}T))=\Exp((\LL-1)(\LL^{-3/2}T^2+\LL^{-2}T^3)+\dots)\\=(1-T^2)^{-\LL^{-3/2}(\LL-1)}(1-T^3)^{-2\LL^{-2}(\LL-1)}\cdot\dots \end{multline*}


\begin{thebibliography}{10}

\bibitem{AIK}
Allen~B Altman, Anthony Iarrobino, and Steven~L Kleiman.
\newblock Irreducibility of the compactified {J}acobian.
\newblock {\em Real and complex singularities, Oslo 1976}, pages 1--12, 1977.

\bibitem{Auroux}
Denis Auroux.
\newblock Fukaya categories and bordered {H}eegaard-{F}loer homology.
\newblock In {\em Proceedings of the International Congress of Mathematicians
  2010 (ICM 2010) (In 4 Volumes) Vol. I: Plenary Lectures and Ceremonies Vols.
  II--IV: Invited Lectures}, pages 917--941. World Scientific, 2010.

\bibitem{BB93}
A.~Beilinson and J.~Bernstein.
\newblock A proof of {J}antzen conjectures.
\newblock {\em Advances in Soviet Mathematics}, 16:1--50, 1993.

\bibitem{Be96}
Vladimir~G Berkovich.
\newblock Vanishing cycles for formal schemes. ii.
\newblock {\em Inventiones mathematicae}, 125(2):367--390, 1996.

\bibitem{BoschLutkebohmertRaynaud90}
Siegfried Bosch, Werner Lütkebohmert, and Michel Raynaud.
\newblock Néron models.
\newblock 1990.

\bibitem{BdBLN22}
Nero Budur, Javier~Fern\'andez de~Bobadilla, Quy~Thuong L\^e, and Hong~Duc
  Nguyen.
\newblock Cohomology of contact loci.
\newblock {\em J. Differential Geom.}, 120(3):389--409, 2022.

\bibitem{CDGZ03}
A.~Campillo, F.~Delgado, and S.~M. Gusein-Zade.
\newblock The {A}lexander polynomial of a plane curve singularity via the ring
  of functions on it.
\newblock {\em Duke Math. J.}, 117(1):125--156, 2003.

\bibitem{CDGZ04}
Antonio Campillo, Felix Delgado, and Sabir~M Gusein-Zade.
\newblock Multi-index filtrations and generalized {P}oincar{\'e} series.
\newblock {\em Monatshefte f{\"u}r Mathematik}, 150(3):193--209, 2007.

\bibitem{Chambert-LoirNicaiseSebag18}
Antoine Chambert-Loir, Johannes Nicaise, and Julien Sebag.
\newblock {\em Motivic Integration}.
\newblock Springer New York, 2018.

\bibitem{CL08}
Raf Cluckers and Fran{\c{c}}ois Loeser.
\newblock Constructible motivic functions and motivic integration.
\newblock {\em Inventiones Mathematicae}, 173(1):23, 2008.

\bibitem{cluckers2015motivic}
Raf Cluckers and Fran{\c{c}}ois Loeser.
\newblock Motivic integration in all residue field characteristics for
  {H}enselian discretely valued fields of characteristic zero.
\newblock {\em Journal f{\"u}r die reine und angewandte Mathematik (Crelles
  Journal)}, 2015(701):1--31, 2015.

\bibitem{DM15}
Ben Davison and Sven Meinhardt.
\newblock Donaldson-{T}homas theory for categories of homological dimension one
  with potential.
\newblock {\em arXiv preprint arXiv:1512.08898}, 2015.

\bibitem{debobadillapelka}
Javier~Fern{\'a}ndez de~Bobadilla and Tomasz Pe{\l}ka.
\newblock Symplectic monodromy at radius zero and equimultiplicity of
  $\mu$-constant families.
\newblock {\em Annals of Mathematics}, 200(1):153--299, 2024.

\bibitem{arcfloer}
Javier de~la Bodega and Eduardo de~Lorenzo Poza.
\newblock The arc-{F}loer conjecture for plane curves.
\newblock {\em arXiv preprint arXiv:2308.00051}, 2023.

\bibitem{DK06}
Pierre Deligne and Nicholas~Michael Katz.
\newblock {\em Groupes de {M}onodromie en {G}eometrie {A}lgebrique: {S}eminaire
  de {G}eometrie {A}lgebrique du {B}ois-{M}arie 1967-1969 ({SGA} 7 {II})},
  volume 340.
\newblock Springer, 2006.

\bibitem{DL98}
J~Denef and François Loeser.
\newblock Motivic {I}gusa zeta functions.
\newblock {\em J. Algebraic Geom.}, 7:505--537, 1998.

\bibitem{DenefLoeser02}
Jan Denef and François Loeser.
\newblock Lefschetz numbers of iterates of the monodromy and truncated arcs.
\newblock {\em Topology}, 41(5):1031–1040, Sep 2002.

\bibitem{DenefLoeser02b}
Jan Denef and François Loeser.
\newblock Motivic {I}ntegration, {Q}uotient {S}ingularities and the {M}c{K}ay
  {C}orrespondence.
\newblock {\em Compositio Mathematica}, 131(3):267--290, 2002.

\bibitem{Eisenbud05}
D.~Eisenbud.
\newblock Geometry of syzygies.
\newblock {\em Graduate Texts in Mathematics}, 2005.

\bibitem{GhigginiSpano}
Paolo Ghiggini and Gilberto Spano.
\newblock Knot {F}loer homology of fibred knots and {F}loer homology of surface
  diffeomorphisms.
\newblock {\em arXiv preprint arXiv:2201.12411}, 2022.

\bibitem{GorskyNemethi}
Eugene Gorsky and Andr{\'a}s N{\'e}methi.
\newblock Lattice and {H}eegaard {F}loer homologies of algebraic links.
\newblock {\em International Mathematics Research Notices},
  2015(23):12737--12780, 2015.

\bibitem{GLM04}
Sabir~M Gusein-Zade, Ignacio Luengo, and Alejandro Melle-Hern{\`a}ndez.
\newblock A power structure over the {G}rothendieck ring of varieties.
\newblock {\em Mathematical Research Letters}, 11(1):49--57, 2004.

\bibitem{gusein2002integrals}
Sabir~Medzhidovich Gusein-Zade, F{\'e}lix Delgado, and Antonio Campillo.
\newblock Integrals with respect to the {E}uler {C}haracteristic over {S}paces
  of {F}unctions and the {A}lexander {P}olynomial.
\newblock {\em Trudy Matematicheskogo Instituta imeni VA Steklova},
  238:144--157, 2002.

\bibitem{Haiman02a}
M.~Haiman.
\newblock {Notes on Macdonald Polynomials and the Geometry of Hilbert Schemes}.
\newblock {\em Symmetric Functions 2001: Surveys of Developments and
  Perspectives}, pages 1--64, 2002.

\bibitem{Haiman02}
M.~Haiman.
\newblock {Vanishing theorems and character formulas for the Hilbert scheme of
  points in the plane}.
\newblock {\em Invent. Math.}, 149(2):371--407, 2002.

\bibitem{Haimantq}
Mark Haiman.
\newblock t,q-{C}atalan numbers and the {H}ilbert scheme.
\newblock {\em Discrete Mathematics}, 193(1):201--224, 1998.

\bibitem{HrushovskiLoeser15}
Ehud Hrushovski and François Loeser.
\newblock Monodromy and the {L}efschetz fixed point formula.
\newblock {\em Annales scientifiques de l’École normale supérieure},
  48(2):313–349, 2015.

\bibitem{Ka00}
Mikhail Kapranov.
\newblock The elliptic curve in the {S}-duality theory and {E}isenstein series
  for {K}ac-{M}oody groups.
\newblock {\em arXiv preprint math/0001005}, 2000.

\bibitem{Kass09}
J.~Kass.
\newblock Good completions of néron models.
\newblock {\em PhD Thesis, Harvard}, 2009.

\bibitem{LoeserWyss19}
Fran{\c{c}}ois Loeser and Dimitri Wyss.
\newblock Motivic integration on the {H}itchin fibration.
\newblock {\em Algebraic Geometry}, pages 196--230, 2021.

\bibitem{MS12}
Laurentiu Maxim and J{\"o}rg Sch{\"u}rmann.
\newblock Twisted genera of symmetric products.
\newblock {\em Selecta Mathematica}, 18(1):283--317, 2012.

\bibitem{Mi16}
John Milnor.
\newblock {\em Singular {P}oints of {C}omplex {H}ypersurfaces}, volume~61.
\newblock Princeton University Press, 2016.

\bibitem{NS07}
Johannes Nicaise and Julien Sebag.
\newblock Motivic {S}erre invariants, ramification, and the analytic {M}ilnor
  fiber.
\newblock {\em Inventiones Mathematicae}, 168(1):133, 2007.

\bibitem{OblomkovRasmussenShende12}
A.~Oblomkov, J.~Rasmussen, and V.~Shende.
\newblock The {Hilbert} scheme of a plane curve singularity and the {HOMFLY}
  homology of its link, with {Appendix by E. Gorsky}.
\newblock {\em Geometry and Topology}, 22(2):645--691, Jan 2018.

\bibitem{OblomkovShende12}
A.~Oblomkov and V.~Shende.
\newblock The {Hilbert} scheme of a plane curve singularity and the {HOMFLY}
  polynomial of its link.
\newblock {\em Duke Mathematical Journal}, 161(7):1277--1303, May 2012.

\bibitem{OZ}
Peter Ozsv{\'a}th and Zolt{\'a}n Szab{\'o}.
\newblock Holomorphic disks and knot invariants.
\newblock {\em Advances in Mathematics}, 186(1):58--116, 2004.

\bibitem{gridhomologybook}
Peter~S Ozsv{\'a}th, Andr{\'a}s~I Stipsicz, and Zolt{\'a}n Szab{\'o}.
\newblock {\em Grid homology for knots and links}, volume 208.
\newblock American Mathematical Soc., 2015.

\bibitem{Pa24}
Luigi Pagano.
\newblock Motivic zeta functions of the {H}ilbert schemes of points on a
  surface.
\newblock {\em Doc. Math.}, 29(4):763--804, 2024.

\bibitem{Ro24}
Ilaria Rossinelli.
\newblock Motivic classes of curvilinear {H}ilbert schemes and {I}gusa zeta
  functions.
\newblock {\em International Mathematics Research Notices},
  2024(24):14737--14762, 2024.

\end{thebibliography}

\bibliographystyle{plain}

\end{document}